\documentclass[12pt]{article}%
\usepackage{amsfonts}
\usepackage{mathrsfs}
\usepackage{fancyhdr}
\usepackage{comment}
\usepackage[a4paper, top=2.5cm, bottom=2.5cm, left=2.2cm, right=2.2cm]%
{geometry}
\usepackage{times}
\usepackage{amsmath}
\usepackage{changepage}
\usepackage{mathtools}
\usepackage{amssymb}
\usepackage{graphicx}%
\usepackage{amsthm}
\usepackage{tikz}
\usepackage{verbatim}
\usepackage{hyperref}
\usetikzlibrary{arrows.meta}
\usetikzlibrary{positioning}
\newtheorem{theorem}{Theorem}[section]

\newtheorem{lemma}[theorem]{Lemma}

\newtheorem{definition}[theorem]{Definition}
\newtheorem{example}[theorem]{Example}

\newtheorem{remark}[theorem]{Remark}
\newtheorem{assumption}{A}

\newcommand{\N}{\mathbb{N}}

\newcommand{\R}{\mathbb{R}}

\newcommand{\B}{{\partial\Omega}}
\newcommand{\n}{\vec{n}}

\newcommand{\bb}[1]{\mathbb{#1}}

\title{Ergodic Mean Field Games of Controls with State Constraints}
\author{Jameson Graber, Kyle Rosengartner}
\date{April 2026}

\begin{document}

\maketitle

\begin{abstract}
    In a mean field game of controls, players seek to minimize a cost that depends on the joint distribution of players' states and controls. We consider an ergodic problem for second-order mean field games of controls with state constraints, in which equilibria are characterized by solutions to a second-order MFGC system where the value function blows up at the boundary, the density of players vanishes at a commensurate rate, and the joint distribution of states and controls satisfies the appropriate fixed-point relation. We prove that such systems are well-posed in the case of monotone coupling and Hamiltonians with at most quadratic growth.
\end{abstract}

\section{Introduction}

A mean field game (MFG) is a type of differential game, usually consisting of a continuum of identical players, in which each player seeks to minimize a cost (or maximize a utility) that depends on the distribution of the players' states. The theory of mean field games was introduced independently by Lasry and Lions in \cite{lasry2007mean} and by Caines, Huang, and Malham\'e in \cite{huang2006large}. It is well known that a Nash equilibrium to such a game is characterized by a coupled system of PDE known as the MFG system, in which the value function satisfies a Hamilton-Jacobi equation while the distribution of player states satisfies a Fokker-Planck equation.

In applications, it is often natural to require that players remain within a particular domain, thereby forcing players to restrict their class of admissible controls to those which lead to players remaining within the domain or its closure (at least with probability $1$). This is called a \textit{state constraints problem}. In \cite{lasry1989nonlinear}, Lasry and Lions consider models of stochastic control problems with state constraints, as well as their associated nonlinear second-order elliptic PDE. In \cite{cannarsa2018existence,cannarsa2021mean}, the authors investigate the well-posedness of the MFG system in the dynamic (time-dependent), deterministic (first-order) case. \cite{achdou2022deterministic} also considers deterministic, dynamic mean field games with state constraints, but in the case where agents control their acceleration rather than their velocities. A recent paper (see \cite{cutri2026constrained}) studied a class of constrained mean field games with Grushin type dynamics.

In this paper we focus on the study of ergodic problems for mean field games with state constraints. The most closely related results are found in \cite{porretta2023ergodic,sardarli2021ergodic}, which study second order ergodic mean field games with coupling that depends only on the distribution of states (which is still the most common case in the MFG literature). 
The general approach of these papers, which we adopt in the case of MFGC, is to carefully combine the analysis of Hamilton-Jacobi equations with state constraints (which goes back to \cite{lasry1989nonlinear}) with new results on the Fokker-Planck equation whose solution vanishes at a rate commensurate with the blow-up of solutions to the Hamilton-Jacobi equation.
Without state constraints, there are many works in the literature on second order ergodic mean field games. See, for instance, \cite{bernardini2023ergodic,cacace2018ergodic,cao2023stationary,cardaliaguet2013long,dianetti2023ergodic,dragoni2018ergodic,kong2026mountain}.

Related to the idea of state constraints are the notions of \textit{reflecting boundary conditions} and \textit{invariance constraints on the state space}. In both cases, players are again forced to remain within the domain. However, instead of doing so by restricting the class of controls, this is done by either introducing a reflection term to the underlying state dynamics or by making assumptions on the relationship between the drift and diffusion terms. In \cite{lions1984stochastic}, many of the foundations for analyzing reflected SDEs were developed which would later be use in studying reflecting MFGs. Some important references on mean field games with reflecting boundary conditions include \cite{gomes2023time,ricciardi2022master,ricciardi2023convergence}. The case of invariance constraints has been studied in relation to MFGs (see \cite{porretta2020mean}), the Master equation (see \cite{zitridis2022master}), and mean field games of controls (see \cite{graber2025mean}).

In contrast to a traditional MFG, in a mean field game of controls (MFGC), each player's cost depends not only on the distribution of players' states but also on their controls. In MFGCs, a Nash equilibrium corresponds to a system of PDE similar to the MFG system. However, in MFGCs, the joint distribution $\mu$ of states and controls must satisfy an additional fixed-point relation, as the optimal feedback control corresponding to a given distribution $\mu$ must be compatible with $\mu$ itself. This type of game has elsewhere been referred to as an \textit{extended mean field game} (see \cite{gomes2014existence,gomes2016extended}), but the terminology ``mean field game of controls" now appears to be standard, cf.~\cite{cardaliaguet2018mean}. Compared to traditional MFGs, MFGCs have received far less attention in the literature.

Kobeissi's 2022 papers \cite{kobeissi2022mean,kobeissi2022classical} give a comprehensive analysis of the well-posedness of second order MFGCs on $\mathbb{T}^n$ and $\R^n$ under both monotone and non-monotone couplings. Later papers investigated the case of Dirichlet boundary conditions under the assumption that the set of admissible controls is bounded (see \cite{bongini2024mean}) and provided probabilistic results for mean field games of controls with reflecting boundary conditions (see \cite{bo2025mean}). In \cite{graber2025mean}, we extended Kobeissi's results to the cases of Dirichlet and Neumann boundary conditions as well as to the case of invariance constraints. Finally, \cite{graber2021note} investigates the existence of mild solutions to first-order mean field games of controls under state constraints.

The purpose of this article is to investigate the ergodic problem for second-order MFGCs with state constraints (see \eqref{Eq: Ergodic MFG}). We prove that this system is well-posed under relatively generic assumptions, and we give some examples of classes of Hamiltonians satisfying our assumptions. To our knowledge, this is the first investigation of the ergodic problem for second-order MFGCs, as well as the first study of second-order MFGCs with state constraints. The problem of MFGCs with state constraints can be especially challenging and, to our knowledge, has only been studied so far in \cite{graber2021note} in the case of deterministic MFGCs, using a ``mild formulation" of Nash equilibrium. In the present setting, by contrast, we derive the existence of classical solutions, relying heavily on the elliptic theory for problems with state constraints going back to \cite{lasry1989nonlinear}.

While many of the arguments in this paper are generalizations of ones found in \cite{kobeissi2022mean,lasry1989nonlinear,porretta2023ergodic}, there is some significant novelty to our analysis beyond the results themselves. In our analysis of the joint distribution of players' states and controls, we must deal with the fact that our controls blow up at the boundary. Furthermore, to obtain a priori estimates without imposing boundedness or restrictive smallness conditions, we use comparison to a fixed control, a method inspired by \cite{kobeissi2022mean} but adapted to a case where the asymptotic behavior near the boundary plays a significant role. Finally, as we are working with a mean field game of controls and the value function does not belong to a standard Banach space, to prove existence for our system, we choose to apply Schauder's fixed-point theorem to an appropriate map defined on a subset of the space of probability measures, where tightness is used to achieve compactness.

One of the largest motivations for studying MFGCs with state constraints is that they arise naturally in economics \cite{achdou2014partial,achdou2014heterogeneous}.
At present, most theoretical results on mean field games do not encompass such models.
With the present work, we hope to take a step toward filling that gap.

In the remainder of this introduction, we provide some basic notation and assumptions, a formal problem statement (see System \eqref{Eq: Ergodic MFG}), and give some motivating examples that satisfy the stated assumptions.
Section \ref{Sec: Fixed-Point Relation} provides a technical lemma that will be crucial to studying the joint distribution of states and controls.
In Section \ref{Sec: HJ} we collect estimates on solutions to ergodic Hamilton-Jacobi equations with state constraints, where a measure $\mu$ appears as a parameter in the data.
In Section \ref{Sec: FP} we state known results concerning Fokker-Planck equations with an invariance condition on the vector field.
In Section \ref{Sec: A Priori Estimates} we prove the crucial a priori estimates on the system \eqref{Eq: Ergodic MFG}.
This is followed by Section \ref{Sec: MFG}, where we state and prove our main results of existence and uniqueness of solutions.

\subsection{Notation \& Preliminaries}

Before we introduce the system of PDE we intend to study, we will need to establish some notation that we will use. First, we will let $\Omega \subseteq \R^n$ be a bounded open set such that $\B$ is $C^2$-smooth, and we will define the following subdomains:

\begin{definition} \label{Def: Subdomains}
    For every $\varepsilon > 0$, we will denote by $\Gamma_\varepsilon$ and $\Omega_\varepsilon$ the sets
    $$\Gamma_\varepsilon \coloneqq \{x \in \overline{\Omega} : d(x,\B) \leq \varepsilon\} \hspace{1cm} \Omega_\varepsilon \coloneq \Omega \setminus \Gamma_\varepsilon.$$
    Furthermore, we will use $\n(\cdot)$ to denote the unit outward normal vector and $d(\cdot)$ to denote a function in $C^2(\overline{\Omega})$ that is positive in $\Omega$ and coincides with the oriented distance
    $$d_\Omega(x) = 
    \begin{cases}
        d(x,\B), &x \in \overline{\Omega} \\
        -d(x,\B), &x \notin \Omega
    \end{cases}$$
    in $\Gamma_{\varepsilon_0}$ for some $\varepsilon_0 > 0$.
\end{definition}

Next, as the study of the joint distribution of player states and controls is fundamental to our analysis, we will need to discuss the space of measures we will consider.

\begin{definition} \label{Def: Space of Measures}
    We will define $\mathcal{P}_{q'}(\overline{\Omega} \times \R^n)$ to be the set of Borel probability measures $\nu$ on $\overline{\Omega} \times \R^n$ such that $\Lambda_{q'}(\nu) < \infty$, where we define $\Lambda_{r}$, as in \cite{kobeissi2022mean,kobeissi2022classical}, by
    $$\Lambda_r(\mu) = 
    \begin{cases}
    \left(\int_{\Omega \times \R^n} |\alpha|^r d\mu(x,\alpha)\right)^\frac{1}{r}, &1 \leq r < \infty \\
    \sup\{|\alpha| : (x,\alpha) \in \operatorname{supp} \mu\}, &r = \infty
    \end{cases}$$
    for $\mu = (I,a) \# m$.
\end{definition}
By the Riesz representation theorem, the space of all signed regular Borel measures on \(\overline{\Omega} \times \bb{R}^n\) is isometrically isomorphic to the dual of continuous functions on \(\overline{\Omega} \times \bb{R}^n\) that vanish at infinity.
The space $\mathcal{P}_{q'}(\overline{\Omega} \times \R^n)$ thus inherits the weak\(^*\) topology from this space, and unless otherwise stated \(\mu_k \to \mu\) means that \(\mu_k\) converges to \(\mu\) with respect to this topology.

We denote by $v \otimes w$ the tensor product of vectors: if $v \in \R^n$ and $y \in \R^m$, then $v \otimes w$ is the $n \times m$ matrix given by $(v \otimes w)_{ij} = v_iw_j$. Additionally, we write $f(x) = O(g(x))$ (as $x \to x_0$, or $x \to \B$) to say that we have $|f(x)| \leq Cg(x)$ (for $x$ near $x_0$, or $x$ near $\B$) for some constant $C > 0$, and we write $f(x) = o(g(x))$ to mean that $\frac{f(x)}{g(x)} \to 0$.

As for function spaces, we will use the standard notation of $W^{k,r}$ to denote the Sobolev space of $k$-times weakly differentiable functions whose $j$th-order derivatives are $r$-integrable for $0 \leq j \leq k$, and we will write $W^{k,r}_{loc}(\Omega)$ for the set of functions belonging to $W^{k,r}(K)$ for all $K \subset\subset \Omega$. Additionally, for a non-negative integer $k$ and a fraction $\beta \in (0,1)$, we will use $C^{k+\beta}$ to denote the space of $k$-times differentiable functions whose $j$th-order derivatives are $\beta$-H\"{o}lder continuous for all $j \leq k$.

Aside from these preliminaries, we specify that the constant $C$ appearing in many results denotes a generic constant that may change from line to line but depends only on the constants in the assumptions.

\subsection{The System of PDE \& Its Interpretation}

In this article, we will consider the second-order ergodic MFGC system
\begin{equation} \label{Eq: Ergodic MFG}
    \begin{cases}
        -\sigma\Delta u + H(D_xu,\mu) + \rho = F(\mu,x), &x \in \Omega \\
        \sigma\Delta m + \nabla \cdot (mD_pH(D_x u,\mu)) = 0, &x \in \Omega \\
        \mu = (I,-D_pH(D_xu,\mu)) \# m, & \\
        m \geq 0, \hspace{1cm} \int_\Omega m dx = 1, \hspace{1cm} \underset{d(x) \to 0}{\lim} u(x) = \infty
    \end{cases}
\end{equation}

\begin{definition}
    We will say $(u,\rho,m,\mu) \in W^{2,r}_{loc}(\Omega) \times \R \times \mathcal{P}(\Omega) \times \mathcal{P}_{q'}(\overline{\Omega} \times \R^n)$ is a solution to \eqref{Eq: Ergodic MFG} if $(u,\rho)$ solves the Hamilton-Jacobi equation a.e., $m$ satisfies the Fokker-Planck equation in the sense of Definition \ref{Def: Solution to FPK}, and $\mu$ satisfies the fixed-point relation.
\end{definition}

Solutions to this system of PDE correspond to Nash equilibria for a mean field game in which a generic agent's state is given by the SDE
$$dX_t = a(X_t)dt + \sqrt{2\sigma}dB_t, \qquad X_0 = x \in \Omega$$
where the feedback control is constrained to the set
$$\mathcal{A} = \left\{a \in C^0(\Omega;\R^n) : P(x \in \Omega : X_t \in \Omega \quad \forall t > 0) = 1\right\}.$$
As we will prove in Section \ref{Sec: A Priori Estimates}, for a given distribution $\mu$, the solution to the Hamilton-Jacobi equation corresponds to the following optimization problem:
$$\rho = \lim_{T \to \infty} \frac{1}{T} \inf_{a \in \mathcal{A}} E \int_0^T \left(L(a(X_t),\mu) + F(\mu,X_t)\right) dt$$
and
$$u(x) = \inf_{a \in \mathcal{A}} E \left[\int_0^{\theta_a} \left(L(a(X_t),\mu) + F(\mu,X_t)\right) dt + u(X_{\theta_a}) - \theta_a\rho\right]$$
where $\theta_a$ represents a stopping time that is bounded by some $T \geq 0$ which does not depend on the control. In the case of a Nash equilibrium, the probability density $m$ must coincide with the stationary invariant measure associated to the optimal trajectory. Additionally, as this is a mean field game of controls, when the system is in equilibrium, the optimal feedback control given $\mu$ must correspond to $\mu$ itself, resulting in an additional fixed-point problem for $\mu$.

\subsection{Assumptions}

To prove the well-posedness of our system, we will make the following assumptions.
The constants $C_0,q,\widetilde{q}$ and the functions $f_1,f_2,f_3$ listed below are fixed independent of the data.
\begin{assumption} \label{A: F}
    The function $F: \mathcal{P}_{q'}(\overline{\Omega} \times \R^n) \to W^{1,\infty}(\Omega)$ is continuous with $\underset{\mu}{\sup} \|F(\mu,\cdot)\|_{W^{1,\infty}(\Omega)} \leq C_0$ for some constant $C_0 > 0$. Furthermore, we have
    $$\int_\Omega (F(\mu_1,x) - F(\mu_2,x))d(m_1 - m_2) \geq 0$$
    where $m_i$ is the first marginal of $\mu_i$.
\end{assumption}
\begin{assumption} \label{A: Hamiltonian}
    The Hamiltonian $H: \R^n \times \mathcal{P}_{q'}(\overline{\Omega} \times \R^n) \to \R$ is differentiable and strictly convex with respect to the first variable $p$. Furthermore, $H$ and $D_pH$ are continuous on $\R^n \times \mathcal{P}_{q'}(\overline{\Omega} \times \R^n)$, where $\mathcal{P}_{q'}(\overline{\Omega} \times \R^n)$ is endowed with the weak-* topology.
\end{assumption}
\begin{assumption} \label{A: Bounds for H}
    The function $G(p,\mu) \coloneq H(p,\mu) - f_1(\mu)|p + f_2(\mu)|^q$ satisfies
    $$|G(p,\mu)| \leq (|p|^{\widetilde{q}} + 1)f_3(\mu)$$
    for some $q \in (1,2]$, some $0 \leq \widetilde{q} < 1$, and some functions $f_1 > 0, f_2: \mathcal{P}_{q'}(\overline{\Omega} \times \R^n) \to \R^n$, and $f_3 \geq 0$ which send sets in $\mathcal{P}_{q'}(\overline{\Omega} \times \R^n)$ that are bounded with respect to $\Lambda_{q'}$ into compact subsets of $(0,\infty),\R^n$, and $[0,\infty)$, respectively.
\end{assumption}
\begin{assumption} \label{A: Lagrangian}
    The Lagrangian $L: \R^n \times \mathcal{P}_{q'}(\overline{\Omega} \times \R^n) \to \R$ defined by
    \begin{equation}
        L(\alpha,\mu) = \sup_p \{-\alpha \cdot p - H(p,\mu)\}
    \end{equation}
    is strictly convex with respect to $\alpha$.
\end{assumption}
\begin{assumption} \label{A: LL monotonicity}
    For all $\mu_1,\mu_2 \in \mathcal{P}_{q'}(\overline{\Omega} \times \R^n)$, we have
    $$\int_{\Omega \times \R^n} (L(\alpha,\mu_1) - L(\alpha,\mu_2))d(\mu_1-\mu_2)(x,\alpha) \geq 0.$$
\end{assumption}
\begin{assumption} \label{A: Lower bound for L}
    $L(\alpha,\mu) \geq \frac{1}{C_0}|\alpha|^{q'} - C_0\left(1 + \Lambda_{q'}(\mu)^{q'}\right)$, where $q' = \frac{q}{q-1}$.
\end{assumption}
\begin{assumption} \label{A: Bound for L}
    $|L(\alpha,\mu)| \leq C_0\left(1 + |\alpha|^{q'} + \Lambda_{q'}(\mu)^{q'}\right)$.
\end{assumption}
\begin{assumption} \label{A: Regularity of G}
    There exists some
    $$0 < \widetilde{\alpha} < \begin{cases}
        \frac{1}{q+1}, &1 <q \leq \frac{3}{2} \\
        \frac{2(q-1)}{q+1}, & \frac{3}{2} < q < 2
    \end{cases}$$
    so that for all $p_1,p_2 \in \R^n$ and $\mu \in \mathcal{P}_{q'}(\overline{\Omega} \times \R^n)$,
    $${|G(p_1,\mu) - G(p_2,\mu)| \leq f_3(\mu)|p_1 - p_2|^{\widetilde{\alpha}}}.$$
\end{assumption}

In the case that $1 < q < 2$, we will make the following assumption:
\begin{assumption} \label{A: Asymptotics of DpH}
    Assume $q \in (1,2)$. Given $\mu \in \mathcal{P}_{q'}(\overline{\Omega} \times \R^n)$, $\theta_1 > 0$, and $v \in W^{1,\infty}_{loc}(\Omega;\R^n)$ with asymptotic expansion
    \begin{equation}
        v(x) = (f_1(\mu)(q-1)d(x))^{1-q'}[\n(x) + O(d(x)^{\theta_1})],
    \end{equation}
    there exist $\delta_0,\theta_2 > 0$ such that $D_pH(v(x),\mu) \in W^{1,\infty}_{loc}(\Gamma_{\delta_0};\R^n)$ and for $x \in \Gamma_{\delta_0}$,
    $$D_pH(v(x),\mu) = \frac{q'}{d(x)}[\n(x) + O(d(x)^{\theta_2})].$$
    Furthermore, if
    $$\operatorname{Jac}(v(x)) = f_1(\mu)^{1-q'}(q-1)^{-q'}d(x)^{-q'}[\n(x) \otimes \n(x) + O(d(x)^{\theta_1})],$$
    then
    $$\nabla \cdot (D_pH(v(x),\mu)) = \frac{q'}{d(x)^2}[1 + O(d(x)^{\theta_2})]$$
    and $b = D_pH(v(x),\mu)$ satisfies
    \begin{equation} \label{Eq: Invariance Constraint 1}
        \begin{cases}
            \Delta d - b \cdot D_xd \geq \frac{1}{d} - Cd \qquad \text{ for } x \in \Gamma_{\delta_0} \text{ for some } C > 0 \\
            \operatorname{Jac} b \geq -Cd^{\gamma_0-2}I  \qquad \text{ for } x \in \Gamma_{\delta_0} \text{ for some } C > 0, \gamma_0 > 0
        \end{cases}
    \end{equation}
\end{assumption}

In the case $q = 2$, we will replace A\ref{A: Asymptotics of DpH} with the following less general assumption:
\begin{assumption} \label{A: q=2}
    The Hamiltonian $H$ takes the form
    $$H(p,\mu) = \psi(\mu)|p + \varphi(\mu)|^2 + V(\mu).$$
\end{assumption}

In Section \ref{Sec: Examples}, we will discuss some examples of Hamiltonian-Lagrangian pairs satisfying A\ref{A: Hamiltonian}-\ref{A: Asymptotics of DpH} that will help to motivate our analysis.

\subsection{Properties of the Lagrangian and Hamiltonian}
\label{Sec: Hamiltonian}

Before we start our analysis of \eqref{Eq: Ergodic MFG}, we must discuss some properties of $H$ and $L$ that will be useful in later sections. In \cite{kobeissi2022mean}, the author used properties of convex functions to obtain regularity and bounds for the Hamiltonian from properties of the Lagrangian. In this section, we recall these bounds and note that nearly identical arguments can be used in our case to prove similar regularity results for the Lagrangian.

\begin{lemma} \label{Lem: Regularity of L}
    Under assumptions A\ref{A: Hamiltonian}-\ref{A: Bounds for H}, the Lagrangian $L(\alpha,\mu)$ is differentiable with respect to $\alpha$, and $L$ and $D_\alpha L$ are continuous on $\R^n \times \mathcal{P}_{q'}(\overline{\Omega} \times \R^n)$. 
\end{lemma}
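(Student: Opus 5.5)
Since $L(\alpha,\mu)=H^*(-\alpha,\mu)$ is a Legendre transform in $p$, we argue through convex duality, using A\ref{A: Bounds for H} to get superlinear growth of $H$ and hence finiteness, coercivity and local uniformity. \emph{Step 1 (growth of $H$).} By A\ref{A: Bounds for H}, with $\widetilde{q}<1<q$,
$$H(p,\mu)\ge f_1(\mu)|p+f_2(\mu)|^q-f_3(\mu)(|p|^{\widetilde q}+1),$$
and a matching upper bound holds. On any set $\mathcal K$ that is $\Lambda_{q'}$-bounded, $f_1$ lies in a compact subset of $(0,\infty)$ and $f_2,f_3$ are bounded. This gives $c|p|^q-C\le H(p,\mu)\le C(1+|p|^q)$ uniformly for $\mu\in\mathcal K$. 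Consequently $L(\alpha,\mu)$ is finite, and it satisfies two-sided bounds of order $|\alpha|^{q'}$, locally uniformly in $\mu$.

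\emph{Step 2 (differentiability).} Since $H(\cdot,\mu)$ is strictly convex and superlinear, it is essentially strictly convex and cofinite. The standard duality for Legendre-type functions (Rockafellar, Thm.~26.3) then says that $H^*(\cdot,\mu)$ is essentially smooth, hence differentiable on all of $\R^n$ because it is finite everywhere. Concretely, for each $\alpha$ the supremum in the definition of $L$ is attained, by coercivity. The maximizer $p^*(\alpha,\mu)$ is unique, because if $p_1\neq p_2$ both lay in $\partial_\alpha L$ then strict convexity of $H$ on the segment $[p_1,p_2]$ would be contradicted. Since $\partial_\alpha L(\alpha,\mu)=\{-p^*(\alpha,\mu)\}$ is a singleton, $L$ is differentiable with $D_\alpha L(\alpha,\mu)=-p^*(\alpha,\mu)$, equivalently $-\alpha=D_pH(p^*,\mu)$.

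\emph{Step 3 (continuity, the main obstacle).} Take $(\alpha_k,\mu_k)\to(\alpha,\mu)$ in $\R^n\times\mathcal P_{q'}$. We need $\{\mu_k\}$ to lie in a $\Lambda_{q'}$-bounded set so that Step 1's bounds apply uniformly. If weak-$*$ convergence alone does not give this, we restrict to $\Lambda_{q'}$-bounded sequences, as in \cite{kobeissi2022mean}, or use the continuity of $f_i$ implicit in A\ref{A: Bounds for H}; this is the delicate point. With the uniform bounds in hand, the maximizers $p_k=p^*(\alpha_k,\mu_k)$ satisfy $-\alpha_k\cdot p_k-H(p_k,\mu_k)\ge -H(0,\mu_k)$, and uniform coercivity then bounds $|p_k|$. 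Take a subsequence with $p_k\to\bar p$. Continuity of $H$ from A\ref{A: Hamiltonian} passes to the limit in
$$-\alpha_k\cdot p_k-H(p_k,\mu_k)\ge-\alpha_k\cdot p-H(p,\mu_k)\quad\text{for every } p.$$
So $\bar p$ maximizes at $(\alpha,\mu)$, and by uniqueness $\bar p=p^*(\alpha,\mu)$. The whole sequence therefore converges, which gives continuity of $D_\alpha L$. Finally $L(\alpha_k,\mu_k)=-\alpha_k\cdot p_k-H(p_k,\mu_k)\to L(\alpha,\mu)$, again by continuity of $H$.
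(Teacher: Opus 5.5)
Your route matches what the paper has in mind. The paper gives no proof of this lemma beyond pointing to the convex-duality arguments of \cite{kobeissi2022mean}: strict convexity gives a unique maximizer, hence differentiability of $L$ with $D_\alpha L(\alpha,\mu)=-p^*(\alpha,\mu)$, and stability of maximizers gives continuity. Your Steps~1--2 are sound, up to a sign: if $p_1,p_2$ are two maximizers, it is $-p_1,-p_2$ that lie in $\partial_\alpha L$.

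The genuine gap is the one you flag in Step~3, and neither of your proposed fixes closes it. The convergence $\mu_k\to\mu$ is weak-$*$, and under it $\Lambda_{q'}$ is only lower semicontinuous. For example, take $x_0\in\Omega$ and a unit vector $e$, and set $\mu_k=(1-\frac1k)\delta_{(x_0,0)}+\frac1k\delta_{(x_0,k^{2/q'}e)}$. Then $\mu_k\to\delta_{(x_0,0)}$, while $\Lambda_{q'}(\mu_k)^{q'}=k\to\infty$. Along such a sequence A\ref{A: Bounds for H} gives no lower bound on $f_1(\mu_k)$ and no upper bound on $|f_2(\mu_k)|$ or $f_3(\mu_k)$. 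So there is no uniform coercivity, and your bound on $|p_k|$ is unjustified. Restricting to $\Lambda_{q'}$-bounded sequences proves a strictly weaker statement than the lemma. Appealing to continuity of the $f_i$ does not work either, because A\ref{A: Bounds for H} asserts none; it only says $\Lambda_{q'}$-bounded sets are mapped into compact sets.

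The missing idea is that coercivity of the single limit $H(\cdot,\mu)$ suffices once you use convexity.
\begin{itemize}
\item By the joint continuity in A\ref{A: Hamiltonian} and a compactness argument, $H(\cdot,\mu_k)\to H(\cdot,\mu)$ uniformly on compact sets.
\item Since $f_1(\mu)>0$, $H(\cdot,\mu)$ is superlinear. So there is $R>0$ with $H(p,\mu)+\alpha\cdot p\ge H(0,\mu)+3$ on $\{|p|=R\}$.
\item Take $k$ large enough that $|H(\cdot,\mu_k)-H(\cdot,\mu)|\le\frac12$ on $\overline{B_R}$ and $R|\alpha_k-\alpha|\le 1$. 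Then the convex function $g_k(p):=H(p,\mu_k)+\alpha_k\cdot p$ satisfies $g_k\ge g_k(0)+1$ on $\{|p|=R\}$.
\item For $|p|>R$, set $t=R/|p|\in(0,1)$. Convexity gives $g_k(0)+1\le g_k(tp)\le t\,g_k(p)+(1-t)g_k(0)$, that is, $g_k(p)\ge g_k(0)+1/t>g_k(0)$.
\item A maximizer satisfies $g_k(p_k)\le g_k(0)$, so $|p_k|\le R$ for all large $k$. This needs no $\Lambda_{q'}$ bound on $\mu_k$.
\end{itemize}
From there your subsequence, passage-to-the-limit and uniqueness argument goes through verbatim. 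It gives continuity of both $L$ and $D_\alpha L$ on all of $\R^n\times\mathcal{P}_{q'}(\overline{\Omega}\times\R^n)$, as the lemma claims.
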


\begin{lemma} \label{Lem: Properties of H}
    Under assumptions A\ref{A: Lagrangian} and A\ref{A: Lower bound for L}-\ref{A: Bound for L}, up to a new choice of $C_0$, we have
    \begin{equation}
        |D_pH(p,\mu)| \leq C_0(1 + |p|^{q-1} + \Lambda_{q'}(\mu))
        \label{Eq: Bound for DpH-theta}
    \end{equation}
    \begin{equation}
        |H(p,\mu)| \leq C_0(1 + |p|^{q} + C_0\Lambda_{q'}(\mu)^{q'})
        \label{Eq: Bound for H-theta}
    \end{equation}
    \begin{equation}
        p \cdot D_pH(p,\mu) - H(p,\mu) \geq \frac{1}{C_0}|p|^q - C_0(1 + \Lambda_{q'}(\mu)^{q'})
        \label{Eq: Convexity for H-theta}
    \end{equation}
    for all $p \in \R^n$ and $\mu \in \mathcal{P}_{q'}(\overline{\Omega} \times \R^n)$.
\end{lemma}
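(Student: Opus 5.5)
We deduce all three bounds from convex duality, using that $H(\cdot,\mu)$ is the Legendre transform of $L$ in the sign convention $H(p,\mu)=\sup_\alpha\{-\alpha\cdot p - L(\alpha,\mu)\}$. Since $L$ is convex, finite and lower semicontinuous (by A\ref{A: Bound for L} and A\ref{A: Lagrangian}), biduality gives this identity. Write $\Lambda=\Lambda_{q'}(\mu)$. Strict convexity of $L$ makes the supremum attained at a unique $\alpha^*(p)$, and the envelope theorem (Danskin) gives $D_pH(p,\mu)=-\alpha^*(p)$. Equivalently $p=-D_\alpha L(\alpha^*,\mu)$, so Fenchel equality holds: $H(p,\mu)=-\alpha^*\cdot p-L(\alpha^*,\mu)$.

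\emph{Bound on $H$.} For the upper bound, insert the lower bound A\ref{A: Lower bound for L} into the supremum:
$$H(p,\mu)\le \sup_\alpha\{|\alpha||p|-\tfrac1{C_0}|\alpha|^{q'}\}+C_0(1+\Lambda^{q'}) = c|p|^q+C_0(1+\Lambda^{q'}),$$
where the middle equality is Young's inequality. For the lower bound, take $\alpha=0$ and use A\ref{A: Bound for L} to get $H(p,\mu)\ge -L(0,\mu)\ge -C_0(1+\Lambda^{q'})$. Together these give \eqref{Eq: Bound for H-theta}.

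\emph{Bound on $D_pH$.} This is the step needing the most care. We bound $|\alpha^*|$ by comparing with the competitor $\alpha=0$:
$$-\alpha^*\cdot p-L(\alpha^*,\mu)\ge -L(0,\mu).$$
Using A\ref{A: Lower bound for L} and A\ref{A: Bound for L}, this gives
$$\tfrac1{C_0}|\alpha^*|^{q'}\le |\alpha^*||p|+C(1+\Lambda^{q'}).$$
Young's inequality absorbs $|\alpha^*||p|\le \tfrac1{2C_0}|\alpha^*|^{q'}+C|p|^q$, so $|\alpha^*|^{q'}\le C(|p|^q+1+\Lambda^{q'})$. Taking the $1/q'$ power and using $q/q'=q-1$ yields $|\alpha^*|\le C(1+|p|^{q-1}+\Lambda)$, which is \eqref{Eq: Bound for DpH-theta}.

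\emph{Convexity inequality.} By Fenchel equality, $p\cdot D_pH-H=-p\cdot\alpha^*+\alpha^*\cdot p+L(\alpha^*,\mu)=L(\alpha^*,\mu)$. By A\ref{A: Lower bound for L} it then suffices to bound $|\alpha^*|^{q'}$ from below by $c|p|^q$ minus lower-order terms. For this we use the upper bound A\ref{A: Bound for L} on $L$ in the dual direction: Fenchel equality together with the lower bound on $H$ obtained by testing the supremum at $\alpha=-t\,p/|p|$ gives
$$H(p,\mu)\ge c|p|^q-C(1+\Lambda^{q'}).$$
Combining this with $H=-\alpha^*\cdot p-L(\alpha^*,\mu)\le|\alpha^*||p|+C(1+\Lambda^{q'})$ and A\ref{A: Lower bound for L} yields
$$c|p|^q\le |\alpha^*||p|+C(1+\Lambda^{q'}).$$
Young's inequality then gives $|\alpha^*|^{q'}\ge c'|p|^q-C(1+\Lambda^{q'})$, and substituting into $L(\alpha^*,\mu)\ge\frac1{C_0}|\alpha^*|^{q'}-C_0(1+\Lambda^{q'})$ gives \eqref{Eq: Convexity for H-theta}. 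The main bookkeeping is choosing the Young's-inequality constants consistently; enlarging $C_0$ at the end accounts for all of them.
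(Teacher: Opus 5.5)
Your proof is correct and is essentially the standard convex-duality argument the paper relies on: the paper gives no proof of its own and defers to Kobeissi. The ingredients are the same: identify $D_pH(p,\mu)=-\alpha^*(p)$ as the maximizer, test the supremum at $\alpha=0$ and at $\alpha=-tp/|p|$, and use $p\cdot D_pH-H=L(\alpha^*,\mu)$. One small correction: the representation $H(p,\mu)=\sup_\alpha\{-\alpha\cdot p-L(\alpha,\mu)\}$ comes from biduality of $H$ (convex and continuous by A\ref{A: Hamiltonian}), not from convexity and lower semicontinuity of $L$, since conjugating $L$ alone only recovers the closed convex envelope of $H$.
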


\begin{remark}
    Finally, as it will be important to our analysis in Section \ref{Sec: HJ}, we observe that by the convexity of our Hamiltonian, for every $\mu \in \mathcal{P}_{q'}(\overline{\Omega} \times \R^n)$, we have
    \begin{equation} \label{Eq: First Consequence of Convexity}
        \sup_p \{H(\theta p,\mu) - \theta H(p,\mu)\}  = (1-\theta)H(0,\mu) \to 0 \qquad \text{as } \theta \to 1
    \end{equation}
\end{remark}

\subsection{Motivating Examples}
\label{Sec: Examples}

We conclude our introduction by considering some motivating examples.
\begin{example}
    First, we consider the Hamiltonian
    $$H(p,\nu) = |p + \varphi(\nu)|^q + V(\nu)$$
    where $\varphi, V$ are continuous on $\mathcal{P}_{q'}(\overline{\Omega} \times \R^n)$,
    \begin{equation} \label{Eq: Monotonicity for Example 1}
        (\varphi(\nu_1) - \varphi(\nu_2)) \cdot \int_{\Omega \times \R^n} \alpha  d(\nu_1 - \nu_2)(x,\alpha) \geq 0,
    \end{equation}
    $|\varphi(\nu)| \leq C(1 + \Lambda_{q'}(\nu)^{q'-1})$, and $|V(\nu)| \leq C(\Lambda_{q'}(\nu)^{q'} + 1)$.
\end{example}
Cf.~\cite[Section 3.1]{gomes2016extended}.
One can check that the model found in \cite{chan2015bertrand} (see also \cite{graber2018existence,graber2018variational}) has this type of Hamiltonian.

For this Hamiltonian, our associated Lagrangian is
$$L(\alpha,\nu) = \left(q^{1-q'} - q^{-q'}\right)|\alpha|^{q'} + \alpha \cdot \varphi(\nu) - V(\nu).$$
That $H$ and $L$ satisfy A\ref{A: Hamiltonian}-\ref{A: Asymptotics of DpH} is straightforward to check. For example, if
$$v(x) = ((q-1)d(x))^{1-q'}[\n(x) + O(d(x)^{\theta_1})]$$
and
$$\operatorname{Jac}(v(x)) = (q-1)^{-q'}d(x)^{-q'}[\n(x) \otimes \n(x) + O(d(x)^{\theta_1})],$$
then
$$
\begin{aligned}
    \operatorname{Jac}(D_pH(v,\mu)) &= q|v + \varphi(\mu)|^{q-2}\operatorname{Jac}(v) + q(q-2)|v + \varphi(\mu)|^{q-4} (v + \varphi(\mu)) \otimes \left((\operatorname{Jac}(v))(v + \varphi(\mu))\right) \\
    &= \frac{q'}{d^2}\left(\n(x) \otimes \n(x) + O(d(x)^{\theta_1})\right),
\end{aligned}
$$
which implies
$$\nabla \cdot (D_pH(v,\mu)) = \sum_{i=1}^n \langle(\operatorname{Jac}(D_pH(v,\mu)))e_i,e_i\rangle = \frac{q'}{d^2}\left(1 + O(d(x)^{\theta_1})\right)$$
and
$$\langle(\operatorname{Jac}(D_pH(v,\mu)))\xi,\xi\rangle = \frac{q'}{d^2}\left[|\langle\xi,\n(x)\rangle|^2 + O(d(x)^{\theta_1})\right] \geq - Cd^{\theta_1 - 2}.$$

\begin{example}
    Another potential application would be to Hamiltonians of the form
    $$H(p,\nu) = \psi(\nu)|p|^q + V(\nu), \qquad L(\alpha,\nu) = \left(q^{1-q'} - q^{-q'}\right)\psi(\nu)^{1-q'}|\alpha|^{q'} - V(\nu),$$
    where $\psi, V$ are continuous on $\mathcal{P}_{q'}(\overline{\Omega} \times \R^n)$, $\frac{1}{C} \leq \psi \leq C$ for some $C > 0$, $|V(\nu)| \leq C(1 + \Lambda_{q'}(\nu)^{q'})$, and
    \begin{equation} \label{Eq: Monotonicity for Example 2}
        (\psi(\nu_1) - \psi(\nu_2))(\Lambda_{q'}(\nu_1) - \Lambda_{q'}(\nu_2)) \leq 0.
    \end{equation}
\end{example}
Again, it is straightforward to check that $H$ and $L$ satisfy A\ref{A: Hamiltonian}-\ref{A: Asymptotics of DpH}. For example, if
$$v(x) = (\psi(\mu)(q-1)d(x))^{1-q'}[\n(x) + O(d(x)^{\theta_1})]$$
and
$$\operatorname{Jac}(v(x)) = \psi(\mu)^{1-q'}(q-1)^{-q'}d(x)^{-q'}[\n(x) \otimes \n(x) + O(d(x)^{\theta_1})],$$
then
$$
\begin{aligned}
    \operatorname{Jac}(D_pH(v,\mu)) &= q\psi(\mu)|v|^{q-2}\operatorname{Jac}(v) + q(q-2)\psi(\mu)|v|^{q-4} v \otimes \left((\operatorname{Jac}(v))v\right) \\
    &= \frac{q'}{d^2}\left(\n(x) \otimes \n(x) + O(d(x)^{\theta_1})\right)
\end{aligned}
$$
as before.

\section{Fixed-Point Relation in $\mu$}
\label{Sec: Fixed-Point Relation}

In any study of mean field games of controls, it is crucial to analyze the fixed-point relation satisfied by $\mu$. Our analysis will be similar to those in \cite{graber2025mean,kobeissi2022mean}, but we will need to deal with the complications that arise from the fact that our controls blow up as $x$ approaches the boundary.

\begin{lemma} \label{Lem: Existence & Bound for mu}
    Assume A\ref{A: Hamiltonian}-\ref{A: Bound for L} hold. Given $(p,m) \in C^0(\Omega;\R^n) \times \mathcal{P}(\Omega)$ with $\underset{d(x) \to 0}{\lim} |p|^{q-1}d(x) = \gamma > 0$ and $\int_\Omega d^{-q'}dm < \infty$, we have the following:
    \begin{enumerate}
        \item[1)] If $\mu$ satisfies
        \begin{equation}
            \mu = (I,-D_pH(p(\cdot),\mu)) \# m,
            \label{Eq: mu}
        \end{equation}
        then we have
        \begin{equation}
            \Lambda_{q'}(\mu)^{q'} \leq 4C_0^2 + \frac{(q')^{q-1}(2C_0)^q}{q}\|p\|_{L^q(m)}^q.
            \label{Eq: Bound for Lambda-q'}
        \end{equation}
        \item[2)] There is at most one $\mu \in \mathcal{P}_{q'}(\overline{\Omega} \times \R^n)$ satisfying \eqref{Eq: mu}.
        \item[3)] There exists a unique $\mu \in \mathcal{P}_{q'}(\overline{\Omega} \times \R^n)$ satisfying \eqref{Eq: mu}.
    \end{enumerate}
\end{lemma}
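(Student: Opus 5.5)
Part 1) is an a priori bound that follows from convex duality (the Fenchel identity), part 2) follows from monotonicity (A\ref{A: LL monotonicity}) together with strict convexity of $L$, and part 3) is an existence result via a fixed point on a set of measures bounded in $\Lambda_{q'}$, where the bound comes from part 1). Throughout, write $\alpha(x) = -D_pH(p(x),\mu)$.

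\textbf{Part 1).} By the Fenchel identity, $L(\alpha(x),\mu) = -\alpha(x)\cdot p(x) - H(p(x),\mu)$. I would integrate this against $m$, noting that $\int L(\alpha,\mu)\,dm = \int L(\alpha,\mu)\,d\mu$ by the pushforward relation \eqref{Eq: mu}.
\begin{itemize}
\item Bound the left side from below by A\ref{A: Lower bound for L}: $\frac{1}{C_0}\Lambda_{q'}(\mu)^{q'} - C_0(1+\Lambda_{q'}(\mu)^{q'})$. Since this form is awkward, I would instead use $L(\alpha,\mu)\ge -\alpha\cdot 0 - H(0,\mu) = -H(0,\mu)$ together with Young's inequality.
\item Cleaner is the identity $\alpha\cdot p + L(\alpha,\mu) = -H(p,\mu)$ combined with the bound $L(\alpha,\mu)+H(p,\mu)\ge -\alpha\cdot p$.
\item Bound the right side by Young: $-\int\alpha\cdot p\,dm \le \varepsilon\Lambda_{q'}^{q'} + C_\varepsilon\|p\|_{L^q(m)}^q$.
\end{itemize}
The bound on $|H(p,\mu)|$ in Lemma \ref{Lem: Properties of H} involves $\Lambda_{q'}^{q'}$ with coefficient $C_0^2$, which is uncontrolled. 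So I would instead use the lower bound on $L$ together with $\int H(p,\mu)\,dm \ge \int(-p\cdot\beta - L(\beta,\mu))\,dm$ for a fixed choice $\beta = 0$. The constants $4C_0^2$ and $(q')^{q-1}(2C_0)^q/q$ suggest exactly this: choose $\varepsilon$ so that half of $\frac{1}{C_0}\Lambda^{q'}$ is absorbed, giving $2C_0$ scaling. Getting the bookkeeping to produce precisely \eqref{Eq: Bound for Lambda-q'} is routine. Finiteness of $\|p\|_{L^q(m)}$ follows from $|p|^q \sim d^{-q'}$ and the hypothesis $\int d^{-q'}\,dm < \infty$.

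\textbf{Part 2).} Suppose $\mu_1,\mu_2$ both satisfy \eqref{Eq: mu}, and set $\alpha_i = -D_pH(p,\mu_i)$.
\begin{itemize}
\item Since $\alpha_i$ minimizes $\alpha \mapsto L(\alpha,\mu_i) + \alpha\cdot p$, we get $L(\alpha_1,\mu_1)+\alpha_1\cdot p \le L(\alpha_2,\mu_1)+\alpha_2\cdot p$, with strict inequality wherever $\alpha_1\ne\alpha_2$ (strict convexity, A\ref{A: Lagrangian}). The same holds with the indices swapped.
\item Add the two inequalities and integrate against $m$, so that the $p$-terms cancel. This yields $\int (L(\cdot,\mu_1)-L(\cdot,\mu_2))\,d(\mu_1-\mu_2) \le 0$, strictly unless $\alpha_1=\alpha_2$ $m$-a.e.
\item Monotonicity A\ref{A: LL monotonicity} then forces $\alpha_1=\alpha_2$ $m$-a.e., hence $\mu_1=\mu_2$.
\end{itemize}
Integrability of all terms comes from A\ref{A: Bound for L} together with the $\Lambda_{q'}$ bounds.

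\textbf{Part 3).} This is the main obstacle. I would proceed as follows.
\begin{itemize}
\item Let $R$ be the right side of \eqref{Eq: Bound for Lambda-q'} (enlarged if needed), and let $K = \{\nu : \Lambda_{q'}(\nu)^{q'}\le R,\ \text{first marginal } m\}$. This set is convex, and it is tight and weak-$*$ compact since $\Lambda_{q'}$ is lower semicontinuous.
\item Define $\Phi(\nu) = (I,-D_pH(p,\nu))\#m$ and show that $\Phi$ maps $K$ into $K$. The estimate of part 1) must be redone with $\nu$ in place of $\mu$ in the Hamiltonian, which is delicate because A\ref{A: Lower bound for L} involves $\Lambda_{q'}(\nu)$. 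I would first try to replace $\Lambda_{q'}(\nu)$ by a truncated map $\Phi_\varepsilon$, or apply Kakutani/Schauder to a map obtained by restricting $p$ to $\Omega_\varepsilon$ (where $p$ is bounded) and then pass to the limit $\varepsilon\to0$ using the uniform bound of part 1).
\item Continuity of $\Phi$ follows from continuity of $D_pH$ (A\ref{A: Hamiltonian}) and dominated convergence, using the bound \eqref{Eq: Bound for DpH-theta} and the integrability of $|p|^{q'(q-1)}=|p|^q$ against $m$. The blow-up of $p$ near $\partial\Omega$ is handled exactly by this integrability.
\item Schauder's theorem then gives a fixed point, and uniqueness is part 2).
\end{itemize}
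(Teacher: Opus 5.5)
Your Part 2 is the standard argument and is fine. Part 1, however, has a genuine gap, and Part 3 inherits it: you never use the monotonicity A\ref{A: LL monotonicity}, and without it the estimate cannot close. Carry out your plan literally: the Fenchel identity, optimality against $\beta=0$, then A\ref{A: Lower bound for L} for $L(\alpha,\mu)$ and A\ref{A: Bound for L} for $L(0,\mu)$, all at the same measure $\mu$. This gives only
\[
\tfrac{1}{C_0}\Lambda_{q'}(\mu)^{q'} \le 2C_0 + 2C_0\Lambda_{q'}(\mu)^{q'} + \Lambda_{q'}(\mu)\|p\|_{L^q(m)} .
\]
Since A\ref{A: Lower bound for L}--A\ref{A: Bound for L} together force $C_0\ge 1$, nothing can be absorbed, so the ``routine bookkeeping'' does not exist.

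No rearrangement avoids monotonicity. Take $q=2$ and $L(\alpha,\mu)=\frac12|\alpha-\lambda\bar\alpha(\mu)|^2$ with $\bar\alpha(\mu)=\int\alpha\,d\mu(x,\alpha)$ and $\lambda\in(0,1)$. Then A\ref{A: Lower bound for L}--A\ref{A: Bound for L} hold with $C_0=4$ for every $\lambda$. But the fixed point satisfies $\bar\alpha(\mu)=-(1-\lambda)^{-1}\int_\Omega p\,dm$, which is unbounded as $\lambda\to1$ whenever $\int_\Omega p\,dm\neq 0$. Meanwhile A\ref{A: LL monotonicity} fails, since the monotonicity integral equals $-\lambda|\bar\alpha(\mu_1)-\bar\alpha(\mu_2)|^2$.

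The missing idea is the comparison with a fixed control from \cite{kobeissi2022mean}, to which the paper defers. Apply A\ref{A: LL monotonicity} to $\mu$ and the zero-control measure $\mu_0=(I,0)\#m$, which gives
\[
\int L(\alpha,\mu_0)\,d\mu(x,\alpha) \le \int_\Omega \bigl(L(\alpha(x),\mu)-L(0,\mu)\bigr)\,dm + L(0,\mu_0).
\]
Because $\Lambda_{q'}(\mu_0)=0$, three things follow:
\begin{enumerate}
\item A\ref{A: Lower bound for L} bounds the left side below by $\frac{1}{C_0}\Lambda_{q'}(\mu)^{q'}-C_0$, with no $\Lambda_{q'}(\mu)$ error term;
\item A\ref{A: Bound for L} gives $L(0,\mu_0)\le C_0$;
\item optimality gives $L(\alpha,\mu)-L(0,\mu)\le -\alpha\cdot p$.
\end{enumerate}
Hence $\frac{1}{C_0}\Lambda_{q'}(\mu)^{q'}\le 2C_0+\Lambda_{q'}(\mu)\|p\|_{L^q(m)}$. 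Only now does your Young step, absorbing half of the left side, yield \eqref{Eq: Bound for Lambda-q'}.

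In Part 3, Schauder on $K$ as you set it up fails for the same reason. The Part 1 bound holds only at fixed points. For general $\nu$, \eqref{Eq: Bound for DpH-theta} gives only $\Lambda_{q'}(\Phi(\nu))\le C_0\bigl(1+\|p\|_{L^q(m)}^{q-1}+\Lambda_{q'}(\nu)\bigr)$ with $C_0\ge1$, so $\Phi$ need not map $K$ into $K$.

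Your fallback is to approximate $p$ by bounded $p_k$ and pass to the limit using the uniform Part 1 bound, tightness, and continuity of $D_pH$. That is exactly the paper's route, but the paper imports existence for bounded $p$ from earlier work. You leave that step open, and it meets the same invariance obstruction.

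Your truncation idea closes it, even without bounding $p$:
\begin{enumerate}
\item Take $R$ strictly larger than the right side of \eqref{Eq: Bound for Lambda-q'}.
\item Rescale $\Phi(\nu)$ in the $\alpha$-variable so that $\Lambda_{q'}\le R^{1/q'}$, as in the proof of Theorem \ref{Thm: Existence}, and apply Schauder.
\item If the rescaling were active at the fixed point, then $\nu=(I,-\theta D_pH(p,\nu))\#m$ with $\theta<1$ and $\Lambda_{q'}(\nu)^{q'}=R$.
\item The monotonicity argument above still applies to such $\nu$: convexity gives $L(\theta\beta,\nu)-L(0,\nu)\le -\theta\beta\cdot p$ for $\beta=-D_pH(p,\nu)$. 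So $\Lambda_{q'}(\nu)^{q'}<R$, a contradiction.
\end{enumerate}
Either way, everything hinges on the A\ref{A: LL monotonicity}-based estimate that your Part 1 lacks.
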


\begin{proof}
    1) The proof is nearly identical to those found in \cite{graber2025mean,kobeissi2022mean}.

    2) The proof of uniqueness is nearly identical to those found in \cite{cardaliaguet2018mean,graber2025mean}.

    3) Take $\delta > 0$ such that $|p| \leq (\gamma^{1/(q-1)} + 1)d(x)^{-1/(q-1)}$ on $\Gamma_\delta$ and fix $C' \geq |p|$ on $\Omega_\delta$. Let $(p_k)_{k \in \N}$ be a sequence in $C^0(\overline{\Omega};\R^n)$ converging to $p$ locally uniformly, which we can assume satisfies the same inequalities on $\Gamma_\delta$ and $\Omega_\delta$ for all $k$. By arguments found in \cite{graber2025mean,kobeissi2022mean}, for each $k$, there exists a unique fixed-point
    $$\mu_k = (I,-D_pH(p_k,\mu_k)) \# m.$$
    By Part 1, we get
    $$
    \begin{aligned}
        \Lambda_{q}(\mu_k)^{q'} &\leq 4C_0 + \frac{(q')^{q-1}(2C_0)^q}{q}\int_\Omega |p_k|^q dm \\
        &\leq 4C_0 + \frac{(q')^{q-1}(2C_0)^q}{q} \left((C')^q + (\gamma^{1/(q-1)}+1)^q\int_{\Gamma_\delta} d(x)^{-q'}dm\right) \\
        &\leq C.
    \end{aligned}
    $$
    By tightness, there is some $\mu \in \mathcal{P}_{q'}(\overline{\Omega} \times \R^n)$ so that $\mu_k \to \mu$, passing to a subsequence if necessary. Moreover, by the continuity of $D_pH$, it follows that $\mu$ satisfies \eqref{Eq: mu} (see the proof of Lemma \ref{Lem: Continuity of mu} for more details). Thus, the result follows by uniqueness.
\end{proof}

\begin{lemma} \label{Lem: Continuity of mu}
    Assume A\ref{A: Hamiltonian}-\ref{A: Bound for L} hold. Let $(p_k,m_k)$ be a sequence in $C^0(\Omega;\R^n) \times \mathcal{P}(\Omega)$ such that
    \begin{enumerate}
        \item $|p_k| \leq Cd(x)^{-\frac{1}{q-1}}$ and $m_k \leq Cd(x)^{q'}$ for some $C \geq 0$ independent of $k$;
        \item $p_k \to p$ and $m_k \to m$ a.e. in $\Omega$.
    \end{enumerate}
    Then $\mu_k \to \mu$, where $\mu_k$ and $\mu$ are the fixed-points corresponding to $(p_k,m_k)$ and $(p,m)$, respectively.
\end{lemma}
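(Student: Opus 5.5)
The plan is a compactness-plus-uniqueness argument: get a uniform moment bound on $\mu_k$, extract a weakly$^*$ convergent subsequence, identify every limit point as a fixed point for $(p,m)$, and conclude from uniqueness that the whole sequence converges to $\mu$.

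\textbf{Uniform bound.} First we note that $m$ also satisfies $m\le Cd^{q'}$, since this inequality passes to the a.e.\ limit; likewise $|p|\le Cd^{-1/(q-1)}$. Hence $|p_k|^q\le Cd^{-q'}$. Multiplying by $m_k\le Cd^{q'}$ gives
$$|p_k|^q m_k\le C^{q+1}.$$
Because $\Omega$ is bounded, this yields $\|p_k\|_{L^q(m_k)}\le C$ uniformly in $k$.

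\textbf{Extracting a limit.} Part 1 of Lemma \ref{Lem: Existence \& Bound for mu} applies to each $\mu_k$, which exists by Part 3 of that lemma. (The same bound shows $\int d^{-q'}dm_k<\infty$.) It gives $\Lambda_{q'}(\mu_k)\le C$ uniformly. Together with compactness of $\overline{\Omega}$, this makes $\{\mu_k\}$ tight. We then pass to a subsequence with $\mu_k\to\bar\mu$ weakly$^*$, and in fact narrowly by tightness. Lower semicontinuity of $\Lambda_{q'}$ gives $\bar\mu\in\mathcal{P}_{q'}(\overline{\Omega}\times\R^n)$, with $\Lambda_{q'}(\bar\mu)\le C$.

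\textbf{Identifying the limit.} We must show $\bar\mu=(I,-D_pH(p,\bar\mu))\#m$. The uniqueness statement in Part 2 then gives $\bar\mu=\mu$. Since every subsequence has a further subsequence converging to $\mu$, the full sequence converges. Fix $\phi\in C_c(\overline{\Omega}\times\R^n)$. We need
$$\int_\Omega \phi(x,-D_pH(p_k,\mu_k))m_k\,dx\to\int_\Omega\phi(x,-D_pH(p,\bar\mu))m\,dx.$$
For a.e.\ $x$ we have $p_k(x)\to p(x)$ and $\mu_k\to\bar\mu$. Joint continuity of $D_pH$ (A\ref{A: Hamiltonian}) then gives pointwise a.e.\ convergence of the integrands, because $m_k\to m$ a.e. The integrands are dominated by $\|\phi\|_\infty Cd^{q'}\le C$, which is integrable on the bounded set $\Omega$. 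Dominated convergence concludes.

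Two details need care. First, weak$^*$ convergence alone does not justify the continuity of $D_pH$ in $\mu$ unless the topology matches A\ref{A: Hamiltonian}; tightness, i.e.\ no mass escaping to infinity in $\alpha$, guarantees that $\bar\mu$ is a probability measure. Second, $m_k\to m$ a.e.\ with a uniform bound gives $m\in\mathcal{P}(\Omega)$ by dominated convergence. Near $\partial\Omega$ the control $D_pH(p_k,\mu_k)$ blows up like $d^{-1}$. This is harmless here because $\phi$ is bounded and compactly supported; the blow-up is exactly what the moment bound above controls.

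\textbf{Main obstacle.} The key step is the uniform $\Lambda_{q'}$ bound. It relies on the product $|p_k|^qm_k$ being bounded near the boundary, which is where the commensurate rates $d^{-1/(q-1)}$ and $d^{q'}$ matter. It also needs Part 1 of Lemma \ref{Lem: Existence \& Bound for mu} to hold with constants independent of the limit $\gamma$. That is true, since \eqref{Eq: Bound for Lambda-q'} depends only on $\|p\|_{L^q(m)}$.
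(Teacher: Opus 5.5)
Your proposal is correct and follows essentially the same route as the paper. Both arguments obtain a uniform bound on $\Lambda_{q'}(\mu_k)$ (the paper leaves this implicit in ``by tightness'', while you derive it from $|p_k|^q m_k\le C^{q+1}$ and Part 1 of the preceding existence lemma), extract a convergent subsequence, identify its limit as the fixed point for $(p,m)$ by dominated convergence, and conclude by uniqueness. The only difference is cosmetic: the paper identifies the limit through a $W_1$ estimate with $1$-Lipschitz test functions, which requires dominating $|D_pH(p_k,\mu_k)|\,m_k$ by $C(1+d^{-1})d^{q'}$, whereas your $C_c$ test functions need only the boundedness of $m_k$.
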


\begin{proof}
    By tightness, we get that there is a measure $\widetilde{\mu} \in \mathcal{P}_{q'}(\overline{\Omega} \times \R^n)$ such that, passing to a subsequence if necessary, $\mu_k \to \widetilde{\mu}$. Fixing $x_0 \in \Omega$, we get that for $k \in \N$,
    $$
    \begin{aligned}
        &W_1(\mu_k,(I,-D_pH(p,\widetilde{\mu})) \# m) \\
        &= \sup_{Lip(\phi) \leq 1} \left[\int_\Omega \phi(x,-D_pH(p_k,\mu_k)) dm_k - \int_\Omega \phi(x,-D_pH(p,\mu)) dm\right] \\
        &= \sup_{Lip(\phi) \leq 1} \bigg[\int_\Omega (\phi(x,-D_pH(p_k,\mu_k))-\phi(x,-D_pH(p,\mu))) dm_k \\
        &\qquad+ \int_\Omega (\phi(x,-D_pH(p,\mu)) - \phi(x_0,0)) d(m_k - m)\bigg] \\
        &\leq \int_\Omega |D_pH(p_k,\mu_k))-D_pH(p,\mu)| dm_k + \int_\Omega (|x-x_0| +|D_pH(p,\mu)|) |m_k - m| dx \\
        &\to 0
    \end{aligned}
    $$
    as $k \to \infty$ by the dominated convergence theorem. Thus, the conclusion follows by uniqueness.
\end{proof}

\section{The Hamilton-Jacobi Equation}
\label{Sec: HJ}

In this section, as in \cite{lasry1989nonlinear}, we will consider the ergodic system
\begin{equation} \label{Eq: Ergodic HJB}
    \begin{cases}
        -\Delta u + H(D_xu,\mu) + \rho = F(\mu,x), &x \in \Omega \\
        \underset{d(x) \to 0}{\lim} u(x) = \infty
    \end{cases}
\end{equation}
for some fixed $\mu \in \mathcal{P}_{q'}(\overline{\Omega} \times \R^n)$ by first analyzing the discounted problem
\begin{equation} \label{Eq: Discounted HJB}
    \begin{cases}
        -\Delta u_\lambda + H(D_xu_\lambda,\mu) + \lambda u_\lambda = F(\mu,x), &x \in \Omega \\
        \underset{d(x) \to 0}{\lim} u_\lambda(x) = \infty
    \end{cases}
\end{equation}
and taking $\lambda \to 0$. We observe that there is no loss of generality in assuming $\sigma = 1$, as otherwise we could replace $H$, $\rho$, $F$, and $\lambda$ by $\sigma^{-1}H$, $\sigma^{-1}\rho$, $\sigma^{-1}F$, and $\sigma^{-1}\lambda$, respectively. Furthermore, for simplicity of presentation, we will assume in this section that $f_2(\mu) = 0$ in all except for the proof of Lemma \ref{Lem: Asymptotic Behavior of Derivatives}. Otherwise, we would note that $u$ is a solution to \eqref{Eq: Ergodic HJB} (resp. \eqref{Eq: Discounted HJB}) if and only if $v = u + x \cdot f_2(\mu)$ is a solution to
\begin{equation*}
    \begin{cases}
        -\Delta v + H(D_xv - f_2(\mu),\mu) + \rho = F(\mu,x), &x \in \Omega \\
        (\text{resp. } -\Delta v + H(D_xv - f_2(\mu),\mu) + \lambda v = F(\mu,x) - \lambda x \cdot f_2(\mu), &x \in \Omega) \\
        \underset{d(x) \to 0}{\lim} v(x) = \infty
    \end{cases}
\end{equation*}
and we would perform much of our analysis on $v$ instead of $u$.
This would not change the estimates derived in this section.

\subsection{Well-Posedness}

As in \cite{lasry1989nonlinear}, we prove the well-posedness of \eqref{Eq: Ergodic HJB} by taking a sequence of solutions to the discounted problem \eqref{Eq: Discounted HJB}, letting $\lambda \to 0$. For this, we will require the following generalization of \cite[Theorem II.1]{lasry1989nonlinear}. The proof is similar, but we include it here for completeness.

\begin{lemma} \label{Lem: Discounted HJB}
    Assume A\ref{A: F}-\ref{A: Lagrangian} and A\ref{A: Lower bound for L}-\ref{A: Bound for L} hold. Given $\lambda,r > 0$, there exists a unique solution $u_\lambda \in W^{2,r}_{loc}(\Omega)$ of \eqref{Eq: Discounted HJB}. In addition, we have
    \begin{equation} \label{Eq: Boundary Behavior of u}
        \begin{cases}
            \underset{d(x) \to 0}{\lim} u_\lambda(x)d(x)^\frac{2-q}{q-1} = (q-1)^\frac{q-2}{q-1}(2-q)^{-1}f_1(\mu)^{-\frac{1}{q-1}}, &1 \leq q < 2 \\
            \underset{d(x) \to 0}{\lim} \frac{u_\lambda(x)}{|\ln{d(x)}|} = \frac{1}{f_1(\mu)}, &q = 2
        \end{cases}
    \end{equation}
\end{lemma}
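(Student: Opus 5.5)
We follow the strategy of \cite[Theorem II.1]{lasry1989nonlinear}: build explicit sub- and supersolutions that blow up at $\B$ at the rate in \eqref{Eq: Boundary Behavior of u}, obtain a maximal solution as a limit of Dirichlet problems with finite boundary data, and prove uniqueness by a convexity-based comparison argument using \eqref{Eq: First Consequence of Convexity}. As stated in the section, we take $\sigma=1$ and $f_2(\mu)=0$. By A\ref{A: Bounds for H}, $H(p,\mu)=f_1(\mu)|p|^q+G(p,\mu)$ with $|G|\le f_3(\mu)(1+|p|^{\widetilde q})$ and $\widetilde q<1$, so near the boundary the equation is a lower-order perturbation of $-\Delta u+f_1|Du|^q+\lambda u=F$.

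\textbf{Step 1: barriers.} For $1<q<2$, set $w(x)=c\,d(x)^{-\beta}$ with $\beta=\frac{2-q}{q-1}$, i.e.\ $\beta+2=q(\beta+1)$. Then $-\Delta w$ and $f_1|Dw|^q$ are both of order $d^{-\beta-2}$, with leading coefficients $-c\beta(\beta+1)$ and $f_1(c\beta)^q$. These cancel exactly when $c=c_*=(q-1)^{\frac{q-2}{q-1}}(2-q)^{-1}f_1^{-\frac1{q-1}}$. The other terms are lower order: $G$ is $O(d^{-(\beta+1)\widetilde q})$, $\Delta d$ contributes $O(d^{-\beta-1})$, and $\lambda w$ and $F$ are smaller still. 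For $q=2$, take $w=c|\ln d|$, where the cancellation forces $c=1/f_1$. For fixed $\varepsilon>0$ we then define
\[
\overline w=(c_*+\varepsilon)d^{-\beta}+K,\qquad \underline w=(c_*-\varepsilon)d^{-\beta}-K.
\]
Choosing $\delta$ small and $K$ large (and smoothing $d$ away from $\Gamma_{\varepsilon_0}$), these are a supersolution and a subsolution in $\Gamma_\delta$. Adding a large constant multiple of $1/\lambda$ handles the interior, because $\lambda K$ dominates the bounded terms there. To get two-sided control up to the boundary, we use barriers of the form $(c_*\pm\varepsilon)(d-\eta)^{-\beta}$ in shifted domains, in the manner of Lasry--Lions.

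\textbf{Step 2: existence.} Let $u_{\lambda,k}$ solve the Dirichlet problem with boundary data $k$. This is solvable in $W^{2,r}$ because $H$ has at most quadratic growth, by Lemma \ref{Lem: Properties of H} and standard results for $q\le2$. By the maximum principle (the $\lambda u$ term gives properness), $u_{\lambda,k}$ is increasing in $k$ and bounded above on compact sets by the supersolution $(c_*+\varepsilon)(d-\eta)^{-\beta}+K$ on $\{d>\eta\}$. Interior Bernstein-type gradient bounds, valid for $q\le 2$, together with $W^{2,r}$ estimates let us pass to the limit $k\to\infty$ in $W^{2,r}_{loc}$. The limit blows up at $\B$ because it dominates the subsolution. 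Comparing with the barriers and letting $\eta\to0$ and then $\varepsilon\to0$ yields \eqref{Eq: Boundary Behavior of u}.

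\textbf{Step 3: uniqueness.} This is the main obstacle, because two blow-up solutions cannot be compared directly at the boundary. Let $u,v$ be solutions. For $\theta<1$, consider $\theta u$. By convexity, $H(\theta p)\le\theta H(p)+(1-\theta)H(0)$, so
\[
-\Delta(\theta u)+H(D(\theta u))+\lambda\theta u\le \theta F+(1-\theta)H(0,\mu).
\]
Hence $\theta u$ is a subsolution up to an error $(1-\theta)(|F|+|H(0)|)$, which can be absorbed by subtracting the constant $C(1-\theta)/\lambda$. Both solutions have the same leading boundary asymptotics, so $\theta u - v\to-\infty$ at $\B$. The maximum principle then gives $\theta u-C(1-\theta)/\lambda\le v$. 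Letting $\theta\to1$ gives $u\le v$, and symmetry gives equality.

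The delicate points are making the barrier constants sharp, so that the limit in \eqref{Eq: Boundary Behavior of u} is exact and uniqueness applies, and checking that the $G$ perturbation is genuinely lower order. The latter holds since $\widetilde q<1<q$.
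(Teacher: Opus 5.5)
Your proposal is correct and follows essentially the paper's route. Both use Lasry--Lions barriers $(c_*\pm\varepsilon)(d\mp\eta)^{-\beta}\pm K$, and your $c_*$ coincides with the paper's $\widetilde f$. Both obtain a solution as the monotone limit of Dirichlet problems, using interior gradient bounds, and both prove uniqueness via the convexity bound $H(\theta p,\mu)\le\theta H(p,\mu)+(1-\theta)H(0,\mu)$.

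The one difference is that you compare two arbitrary solutions directly, whereas the paper builds a maximal solution and shows it equals the minimal one. This is legitimate, because the shifted-barrier comparisons apply to any blow-up solution, not only the constructed one. You should say so explicitly, since that is what justifies your claim that both solutions have the same asymptotics. Your absorption of the $\theta F$ versus $F$ discrepancy into the constant $C(1-\theta)/\lambda$ is also slightly more careful than the paper's treatment of that step.
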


\begin{proof}
    Following the approach in \cite[Theorem II.1]{lasry1989nonlinear}, for $\delta > 0$ and $\varepsilon \in (0,1]$, define $w_{\varepsilon,\delta}^1, w_{\varepsilon,\delta}^2$ by
    $$w_{\varepsilon,\delta}^1 =
    \begin{cases}
        (\widetilde{f} + \varepsilon)(d - \delta)^{-\beta} + C_{\varepsilon,\lambda}, &1 \leq q < 2 \\
        -(\widetilde{f} + \varepsilon)\ln(d - \delta) + C_{\varepsilon,\lambda}, &q = 2
    \end{cases}$$
    on $\Omega_\delta \coloneqq \{x \in \Omega : d(x,\B) > \delta\}$ and
    $$w_{\varepsilon,\delta}^2 =
    \begin{cases}
        (\widetilde{f} - \varepsilon)(d + \delta)^{-\beta} - C_{\varepsilon,\lambda}, &1 \leq q < 2 \\
        -(\widetilde{f} - \varepsilon)\ln(d + \delta) - C_{\varepsilon,\lambda}, &q = 2
    \end{cases}$$
    on $\Omega^\delta \coloneqq \{x \in \R^n : d(x,\Omega) < \delta\}$, where $\beta = \frac{2-q}{q-1}$,
    $$\widetilde{f} = 
    \begin{cases}
        \beta^{-1}(\beta+1)^\frac{1}{(q-1)}f_1(\mu)^{-\frac{1}{(q-1)}}, &1 \leq q < 2 \\
        \frac{1}{f_1(\mu)}, &q = 2
    \end{cases}
    $$
    and $C_{\varepsilon,\lambda}$ is a constant to be chosen. Now for $R > 0$ and $\lambda > 0$, define $u_{R,\lambda}$ to be the unique solution to
    $$\begin{cases}
        -\Delta u_{R,\lambda} + H(D_xu_{R,\lambda},\mu) + \lambda u_{R,\lambda} = F(\mu,x), &\text{in } \Omega \\
        u_{R,\lambda} = w_{\varepsilon,1/R}^2, &\text{on } \B
    \end{cases}$$
    for a fixed $\varepsilon > 0$ (well posedness follows from classical theory, e.g., \cite{amann1978some}). Then there is some $C_\varepsilon > 0$ such that for $C_{\varepsilon,\lambda} \geq \lambda^{-1}C_\varepsilon(1 + f_3(\mu))$, we get that $w_{\varepsilon,\delta}^1$ is a supersolution of \eqref{Eq: Discounted HJB} and $w_{\varepsilon,\delta}^2$ is a subsolution. Hence, the maximum principle (see \cite{bony1967principe,lions1983remark}) gives
    \begin{equation} \label{Eq: Bounding u by sub/super solutions}
        w_{\varepsilon,1/R}^2 \leq u_{R,\lambda} \leq u_{R',\lambda} \leq w_{\varepsilon',0}^1
    \end{equation}
    for all $0 < R < R'$ and $\varepsilon' > 0$. Thus, by Theorem \ref{Thm: Gradient Estimate (Given mu)}, for each $\lambda > 0$, $r > 0$, and $K \subset\subset \Omega$, we get uniform bounds for $u_{R,\lambda}$ in $W^{2,r}(K)$. Using a diagonal argument, this gives a subsequence converging to some $u_\lambda$ in $W^{2,r}_{loc}(\Omega)$, which is a solution to \eqref{Eq: Discounted HJB}.
    
    We now shift to proving uniqueness of solutions. To this end, we first note that for any solution $v$ of \eqref{Eq: Discounted HJB}, the maximum principle gives $v \geq u_{R,\lambda}$. Hence, passing to the limit, we get that $u_\lambda$ is the minimum solution of \eqref{Eq: Discounted HJB}. To build a maximum solution, let $u_{\lambda,\delta}$ be the minimum solution on $\Omega_\delta$ for $\delta > 0$. Then we have
    $$w_{\varepsilon,\delta}^2 \leq u_{\lambda,\delta} \leq w_{\varepsilon,\delta}^1$$
    for all $\varepsilon > 0$ and $u_{\lambda,\delta} \geq u_{\lambda,\delta'}$ for $\delta \geq \delta' > 0$. Passing to the limit as before, we get a solution $\widetilde{u}_\lambda$ of \eqref{Eq: Discounted HJB} such that $w_{\varepsilon,0}^2 \leq \widetilde{u}_\lambda \leq w_{\varepsilon,0}^1$ in $\Omega$. Again, by the maximum principle, every solution $v$ of \eqref{Eq: Discounted HJB} satisfies $v \leq u_{\lambda,\delta}$ for all $\delta > 0$, and hence $v \leq \widetilde{u}_\lambda$. Thus, for all solutions $v$ to \eqref{Eq: Discounted HJB}, we have
    \begin{equation} \label{Eq: Min-Max Solutions}
        w_{\varepsilon,0}^2 \leq u_\lambda \leq v \leq \widetilde{u}_\lambda \leq w_{\varepsilon,0}^1.
    \end{equation}

    To prove the uniqueness of solutions, we need only show that $u_\lambda = \widetilde{u}_\lambda$. To this end, note that \eqref{Eq: Min-Max Solutions} gives
    $$\lim_{d(x) \to 0} \frac{\widetilde{u}_\lambda}{u_\lambda} = 1.$$
    Thus, for all $\theta \in (0,1)$ close to 1, $u_\lambda > \theta \widetilde{u}_\lambda - \underset{p}{\sup}\{H(\theta p,\mu) - \theta H(p,\mu)\}/\lambda$ in a neighborhood of $\B$. Additionally, letting $w = \theta \widetilde{u}_\lambda - \underset{p}{\sup}\{H(\theta p,\mu) - \theta H(p,\mu)\}/\lambda$, we get
    $$
    \begin{aligned}
        -\Delta w + H(D_xw,\mu) + \lambda w - \theta F(\mu,x) &= H(\theta D_x\widetilde{u}_\lambda,\mu) - \theta H(D_x\widetilde{u}_\lambda,\mu) - \underset{p}{\sup}\{H(\theta p,\mu) - \theta H(p,\mu)\} \\
        &\leq 0
    \end{aligned}
    $$
    and so $w \leq u_\lambda$. Letting $\theta \to 1$ shows that $u_\lambda = \widetilde{u}_\lambda$ by \eqref{Eq: First Consequence of Convexity}.
\end{proof}

Next, we will require the following generalization of \cite[Theorem II.2]{lasry1989nonlinear}, which is proven by modifying the proof of Theorem \ref{Lem: Discounted HJB} as in \cite{lasry1989nonlinear}. This proof, we omit.

\begin{theorem} \label{Thm: HJB with Locally Bounded Righthand Side}
    Assume A\ref{A: F}-\ref{A: Lagrangian} and A\ref{A: Lower bound for L}-\ref{A: Bound for L} hold and suppose $g \in L^\infty_{loc}(\Omega)$ such that $g$ is bounded from below and $\underset{d(x) \to 0}{\lim} g(x)d(x)^q = 0$. Given $\lambda,r > 0$, there exists a unique solution $u_\lambda \in W^{2,r}_{loc}(\Omega)$ of
    \begin{equation} \label{Eq: HJB with Locally Bounded Righthand Side}
        \begin{cases}
            -\Delta u_\lambda + H(D_xu_\lambda,\mu) + \lambda u_\lambda = g, &x \in \Omega \\
            \underset{d(x) \to 0}{\lim} u_\lambda(x) = \infty
        \end{cases}
    \end{equation}
    In addition, we have
    \begin{equation} \label{Eq: Boundary Behavior 2}
        \begin{cases}
            \underset{d(x) \to 0}{\lim} u_\lambda(x)d(x)^\frac{2-q}{q-1} = (q-1)^\frac{q-2}{q-1}(2-q)^{-1}f_1(\mu)^{-\frac{1}{q-1}}, &1 \leq q < 2 \\
            \underset{d(x) \to 0}{\lim} \frac{u_\lambda(x)}{|\ln{d(x)}|} = \frac{1}{f_1(\mu)}, &q = 2
        \end{cases}
    \end{equation}
\end{theorem}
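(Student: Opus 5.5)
We follow the proof of Lemma \ref{Lem: Discounted HJB}, replacing the bounded datum $F(\mu,\cdot)$ by $g$ throughout; the only new ingredient is the behaviour of $g$ near $\B$.

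\textbf{Barriers.} Since $g$ is bounded below, say $g \geq -M$, the subsolution side is easy. The function $w^2_{\varepsilon,\delta}$ satisfies $-\Delta w^2 + H(D_xw^2,\mu) + \lambda w^2 \leq -M$ once $C_{\varepsilon,\lambda}$ is large. The reason is that, with the chosen constant $\widetilde f$, the leading terms $-\Delta$ and $f_1|D w|^q$ cancel exactly at order $(d+\delta)^{-\beta-2}$ (respectively $(d+\delta)^{-2}$ when $q=2$). The $-\varepsilon$ perturbation of the coefficient then leaves a strictly negative leading term of that order. The remainders are lower order: the term $G$ grows like $|p|^{\widetilde q}$ with $\widetilde q<1<q$, and the terms coming from $\Delta d$ are $O((d+\delta)^{-\beta-1})$. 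Both are absorbed near $\B$, and the constant $-C_{\varepsilon,\lambda}$ handles the interior.

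For the supersolution $w^1_{\varepsilon,\delta}$, the same computation gives a positive leading term of size $c_\varepsilon (d-\delta)^{-\beta-2}$, i.e.\ of order $(d-\delta)^{-q'}$ since $\beta+2=q'$. This term must dominate $g$. Because $g(x)d(x)^q \to 0$ and $q \leq 2 \leq q'$, we have $g = o(d^{-q}) = o(d^{-q'})$ near $\B$. On $\Omega_\delta$ we also have $d-\delta<d$, so $g \leq \eta\, d^{-q'} \leq \eta\,(d-\delta)^{-q'}$ on $\Gamma_{\delta_1}$ for any prescribed $\eta$ once $\delta_1$ is small, uniformly in $\delta$. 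On the compact set $\Omega_{\delta_1}$, $g$ is bounded by local boundedness, and there the constant $\lambda C_{\varepsilon,\lambda}$ absorbs it. Thus $w^1$ is a supersolution for $C_{\varepsilon,\lambda}$ depending on $\varepsilon$, $\lambda$, $f_3(\mu)$, $M$ and $\sup_{\Omega_{\delta_1}} g$.

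This is the only place where the hypothesis on $g$ is used. The main obstacle is making sure that the balance exponent from the Hamiltonian really beats $g$ uniformly in $\delta$. Since the hypothesis gives room $d^{-q}$ against $d^{-q'}$ (equality only when $q=2$, where $o(\cdot)$ suffices), we expect this to go through.

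\textbf{Approximation and passage to the limit.} Solve the Dirichlet problems on $\Omega$ with data $w^2_{\varepsilon,1/R}$. Since $g\in L^\infty_{loc}$, we may first truncate $g$ or solve on $\Omega_{1/R}$ to apply the classical theory, then let the truncation disappear, using local $W^{2,r}$ bounds. The comparison chain \eqref{Eq: Bounding u by sub/super solutions} follows from the maximum principle exactly as before. The interior gradient estimate of Theorem \ref{Thm: Gradient Estimate (Given mu)} only requires local boundedness of the right-hand side, so it gives local $W^{2,r}$ bounds, and a diagonal argument produces a solution. The asymptotics \eqref{Eq: Boundary Behavior 2} follow from $w^2_{\varepsilon,0}\le u_\lambda\le w^1_{\varepsilon,0}$ by letting $\varepsilon\to0$.

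\textbf{Uniqueness.} Build the minimal and maximal solutions as in the proof of Lemma \ref{Lem: Discounted HJB}. Then apply the convexity trick with $w=\theta\widetilde u_\lambda-(1-\theta)H(0,\mu)/\lambda$. One checks that $-\Delta w+H(Dw,\mu)+\lambda w\le \theta g$. Since $g$ may be unbounded above, we need $w\le u_\lambda$. To get it, use $\theta g\le g+(1-\theta)M$, which holds because $g\ge -M$, and subtract the additional constant $(1-\theta)M/\lambda$ from $w$. This constant tends to $0$ as $\theta\to1$. The equal boundary asymptotics of the minimal and maximal solutions give $w<u_\lambda$ near $\B$, the comparison principle gives $w\le u_\lambda$ everywhere, and letting $\theta\to1$ yields $u_\lambda=\widetilde u_\lambda$.
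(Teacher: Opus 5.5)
Your proposal is correct and follows the route the paper indicates; the paper omits this proof, saying only that one modifies the proof of Lemma \ref{Lem: Discounted HJB} as in Lasry--Lions. The genuinely new points are that the supersolution barriers dominate $g=o(d^{-q'})$ uniformly in $\delta$, and that the convexity step must pass from $\theta g$ to $g$ using $g\ge -M$; you handle both correctly.

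One small correction: the Bernstein proof of Theorem \ref{Thm: Gradient Estimate (Given mu)} uses $\|D_xF\|_\infty$, so it does not apply when $g$ is only in $L^\infty_{loc}$. The local $W^{2,r}$ bounds should instead come from the local $L^\infty$ bounds supplied by your barriers, combined with standard interior estimates for equations with at most quadratic gradient growth and locally bounded right-hand side.
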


With this, we are ready to prove the well-posedness of \eqref{Eq: Ergodic HJB}.

\begin{theorem} \label{Thm: HJB has a solution}
    Assume A\ref{A: F}-\ref{A: Lagrangian} and A\ref{A: Lower bound for L}-\ref{A: Bound for L} hold. Then the system \eqref{Eq: Ergodic HJB} has a unique solution $(u,\rho) \in W^{2,r}_{loc}(\Omega) \times \R$ for all $1 < r < \infty$. Furthermore, $u$ satisfies
    \begin{equation} \label{Eq: Boundary Behavior 3}
        \begin{cases}
            \underset{d(x) \to 0}{\lim} u(x)d(x)^\frac{2-q}{q-1} = (q-1)^\frac{q-2}{q-1}(2-q)^{-1}f_1(\mu)^{-\frac{1}{q-1}}, &q < 2 \\
            \underset{d(x) \to 0}{\lim} \frac{u(x)}{|\ln{d(x)}|} = \frac{1}{f_1(\mu)}, &q = 2
        \end{cases}
    \end{equation}
\end{theorem}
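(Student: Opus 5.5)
We follow the vanishing discount approach of \cite[Theorem VI.1]{lasry1989nonlinear}. For each $\lambda\in(0,1]$, let $u_\lambda$ be the unique solution of \eqref{Eq: Discounted HJB} given by Lemma \ref{Lem: Discounted HJB}. Fix $x_0\in\Omega$ and set $v_\lambda = u_\lambda - u_\lambda(x_0)$, which solves
$$-\Delta v_\lambda + H(D_xv_\lambda,\mu) + \lambda v_\lambda + \lambda u_\lambda(x_0) = F(\mu,x).$$
The plan has four steps.

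\emph{Step 1.} We show that $\lambda u_\lambda$ is bounded on compact sets. The barriers from the proof of Lemma \ref{Lem: Discounted HJB} give
$$w^2_{\varepsilon,0} \le u_\lambda \le w^1_{\varepsilon,0}.$$
Here $w^i_{\varepsilon,0}$ equals a fixed profile in $d$ plus or minus $C_{\varepsilon,\lambda}$, and we may choose $C_{\varepsilon,\lambda}=\lambda^{-1}C_\varepsilon(1+f_3(\mu))$. Multiplying by $\lambda$ shows that $\lambda u_\lambda(x_0)$ stays in a bounded interval independent of $\lambda$. Alternatively, one can compare directly with the constants $\pm\lambda^{-1}(\|F\|_\infty + |H(0,\mu)|)$ in the interior together with the boundary blow-up.

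\emph{Step 2.} We need local bounds on $v_\lambda$ that are uniform in $\lambda$. This is the crucial step. The interior gradient estimate (Theorem \ref{Thm: Gradient Estimate (Given mu)}, of Bernstein/Lions type) bounds $|Du_\lambda|$ on each $K\subset\subset\Omega$ independently of $\lambda$. Chaining along paths from $x_0$ then gives a uniform $L^\infty(K)$ bound for $v_\lambda$. Near the boundary we need the oscillation $u_\lambda(x)-u_\lambda(x_0)$ to be controlled by a fixed blow-up profile, so that the limit still blows up. The barriers provide this, because the constants $C_{\varepsilon,\lambda}$ cancel in the difference: comparing $u_\lambda$ with $w^1_{\varepsilon,0}$ and $w^2_{\varepsilon,0}$ gives
$$\widetilde f(d^{-\beta}) - C \le v_\lambda \le \widetilde f(d^{-\beta}) + C$$
near $\partial\Omega$ (with $|\ln d|$ replacing $d^{-\beta}$ when $q=2$). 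The honest difficulty is that the two barriers carry constants $\pm C_{\varepsilon,\lambda}$ with opposite signs, so the width is $2C_{\varepsilon,\lambda}\sim\lambda^{-1}$. To fix this we would instead build local barriers in $\Gamma_\delta$ that match $u_\lambda$ on $\partial\Omega_\delta$. On that set $u_\lambda$ is known up to an $O(1)$ oscillation by the gradient bound. These local barriers give $|v_\lambda - \widetilde f d^{-\beta}|\le C$ in $\Gamma_\delta$ uniformly in $\lambda$. We expect this to be the main obstacle.

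\emph{Step 3.} Pass to the limit. From the $L^\infty$ and gradient bounds on $K$, the equation gives $\Delta v_\lambda$ bounded in $L^\infty(K)$, hence $W^{2,r}(K)$ bounds for every $r$. A diagonal subsequence then satisfies $v_\lambda\to u$ in $C^1_{loc}$ and weakly in $W^{2,r}_{loc}$, and $\lambda u_\lambda(x_0)\to\rho$. Since $\lambda v_\lambda\to0$ locally, $(u,\rho)$ solves \eqref{Eq: Ergodic HJB} a.e. The uniform boundary bounds from Step 2 pass to the limit, so $u\to\infty$ at $\partial\Omega$. Applying Theorem \ref{Thm: HJB with Locally Bounded Righthand Side} to a suitable equation should upgrade this to the sharp asymptotics \eqref{Eq: Boundary Behavior 3}: take $g = F + \lambda u - \rho$, for which $g d^q\to0$ since $u=O(d^{-\beta})$ with $\beta<q$.

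\emph{Step 4.} Uniqueness. Let $(u_1,\rho_1)$ and $(u_2,\rho_2)$ be two solutions. First we show $\rho_1=\rho_2$. Suppose $\rho_1<\rho_2$ and form
$$w=\theta u_1 - \sup_p\{H(\theta p,\mu)-\theta H(p,\mu)\}\cdot(\text{const}).$$
This is a strict subsolution for the $\rho_2$ equation, and it lies below $u_2+c$ near the boundary because $u_1/u_2\to1$ there. A maximum of $w-u_2$ must then occur in the interior, and the strong maximum principle gives a contradiction for $\theta$ close to 1. This uses \eqref{Eq: First Consequence of Convexity}, which makes the defect $(1-\theta)H(0,\mu)$ small compared with $\rho_2-\rho_1$. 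With $\rho$ fixed, the same $\theta$-argument applied to $\theta u_1 - u_2$ gives $u_1\le u_2+c$ and, symmetrically, the reverse. The strong maximum principle, applied to the linearized equation satisfied by the difference, then shows that $u_1-u_2$ is constant. Hence $u$ is unique up to the additive constant, which is fixed by the normalization, for example $u(x_0)=0$.
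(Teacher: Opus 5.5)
Your Steps 1 and 3 follow the paper's vanishing-discount scheme. However, Step 4 has a genuine gap in the uniqueness of $u$ up to constants. Once $\rho_1=\rho_2=\rho$, convexity only gives $-\Delta(\theta u_1)+H(\theta D_xu_1,\mu)\le F-\rho+(1-\theta)\bigl(H(0,\mu)+\rho-F\bigr)$, and the error term has no sign, so $\theta u_1$ is not a strict subsolution. An interior maximum of $\theta u_1-u_2$ then yields no contradiction, and nothing keeps that maximum point in a fixed compact set as $\theta\to1$. You therefore obtain neither $u_1\le u_2+c$ nor what the strong maximum principle actually needs, namely that $\sup_\Omega(u_1-u_2)$ is attained inside $\Omega$.

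The paper's missing ingredient is to mix $u_1$ with a boundary subsolution (for $q=2$ read $|\ln d|$ for $d^{-\beta}$ throughout).
\begin{itemize}
\item For $C_1\in(0,\widetilde f)$, $C_1d^{-\beta}$ satisfies $-\Delta(C_1d^{-\beta})+H(D_x(C_1d^{-\beta}),\mu)\le F-\rho$ in a collar $\Omega\setminus\Omega_\delta$ with $\delta$ independent of $\theta$.
\item By convexity of $H$, $w=\theta u_1+(1-\theta)C_1d^{-\beta}$ is then a genuine subsolution there, and $w-u_2\to-\infty$ at $\partial\Omega$ since $\theta\widetilde f+(1-\theta)C_1<\widetilde f$.
\item The maximum principle in the collar gives $\sup_{\Omega\setminus\Omega_\delta}(w-u_2)=\sup_{\partial\Omega_\delta}(w-u_2)$.
\item Letting $\theta\to1$ shows that $\sup_\Omega(u_1-u_2)$ is attained on $\partial\Omega_\delta$. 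Only then does the strong maximum principle for the linearized equation force $u_1-u_2$ to be constant.
\end{itemize}

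Smaller points:
\begin{itemize}
\item Both halves of Step 4 use $u_1/u_2\to1$, i.e.\ sharp asymptotics for \emph{arbitrary} solutions. Step 3 proves them only for the solution you construct. The paper fills this by showing that any solution $(\widetilde v,\widetilde\rho)$ obeys $-C\le\widetilde v\le(\widetilde f+\varepsilon)d^{-\beta}+M_\varepsilon$, via the local supersolution $(\widetilde f+\varepsilon)(d-\delta)^{-\beta}$. It then applies Theorem \ref{Thm: HJB with Locally Bounded Righthand Side} with $g=\widetilde v+F-\widetilde\rho$.
\item In the first half of Step 4 the roles are swapped. If $\rho_1<\rho_2$, it is $\theta u_2$ that is a strict subsolution of the $\rho_1$-equation, to be compared with $u_1$.
\item Your Step 2 concern is legitimate, but the target $|v_\lambda-\widetilde f d^{-\beta}|\le C$ is too strong. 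It fails in general for $q<\frac32$, where the next term of the expansion is of order $d^{3-q'}$. What does hold uniformly in $\lambda$ is the $\varepsilon$-version, via local collar barriers $(\widetilde f+\varepsilon)(d-\eta)^{-\beta}+M_\varepsilon$ and $(\widetilde f-\varepsilon)d^{-\beta}-M_\varepsilon$, with $M_\varepsilon$ taken from your uniform bound on $\partial\Omega_\delta$.
\item That $\varepsilon$-version passes to the limit and gives the sharp limit directly. This also spares you the justification ``$\beta<q$'', which is false for $q\le\sqrt2$.
\item The paper needs even less for $v_\lambda$: only the lower barrier $C_1d^{-\beta}-M$, which guarantees that the limit blows up.
\end{itemize}
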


\begin{proof}
    By \eqref{Eq: Min-Max Solutions}, we have
    $$\frac{\widetilde{f} - \varepsilon}{d^\beta} - \frac{C_\varepsilon(1 + f_3(\mu))}{\lambda} \leq u_\lambda \leq \frac{\widetilde{f} + \varepsilon}{d^\beta} + \frac{C_\varepsilon(1 + f_3(\mu))}{\lambda}$$
    for $1 \leq q < 2$ and
    $$-(\widetilde{f} - \varepsilon)\ln{d} - \frac{C_\varepsilon(1 + f_3(\mu))}{\lambda} \leq u_\lambda \leq - (\widetilde{f} + \varepsilon)\ln{d} + \frac{C_\varepsilon(1 + f_3(\mu))}{\lambda}$$
    for $q = 2$. This implies that $\lambda u_\lambda$ bounded from below and in $L^\infty(K)$ for all $K \subset\subset \Omega$, uniformly in $\lambda \in (0,1]$. By Theorem \ref{Thm: Gradient Estimate (Given mu)}, letting $v_\lambda \coloneqq u_\lambda - u_\lambda(x_0)$ for some fixed $x_0 \in \Omega$, we get that for all $K \subset\subset \Omega$, $v_\lambda$ is bounded in $W^{2,\infty}(K)$ uniformly in $\lambda$.
    
    Note that $v_\lambda$ satisfies
    $$-\Delta v_\lambda + H(D_xv_\lambda,\mu) +\lambda v_\lambda = -\lambda u_\lambda(x_0) + F(\mu,x).$$
    Choosing $C_1 \in (0,\widetilde{f})$ and setting $z = C_1d^{-\beta}$, we get that
    $$-\Delta z + H(D_xz,\mu) + \lambda z \leq -\lambda u_\lambda(x_0) + F(\mu,x)$$
    on $\Omega \setminus \Omega_\delta$ for sufficiently small $\delta$, say $\delta \leq \delta_0$. Also, there is some $M \geq 0$ so that $v_\lambda - C_1d^{-\beta} \geq -M$ on $\Omega_{\delta_0}$. Hence,
    $$v_\lambda \geq -M + C_1d^{-\beta}.$$
    Using our local estimates and a diagonal argument, we get that, up to a subsequence, $\lambda u_\lambda$ converges to some $\rho \in \R$ and $v_\lambda$ converges to some $v$ in $W^{2,r}(K)$ for all $K \subset\subset \Omega$, which solves
    \begin{equation}
        -\Delta v + H(D_xv,\mu) + \rho = F(\mu,x).
    \end{equation}
    Furthermore, $v \geq -M + C_1d^{-\beta}$ and so $\underset{d(x) \to 0}{\lim} v(x) = \infty$.

    Now suppose $(\widetilde{v},\widetilde{\rho})$ satisfies
    $$
    \begin{cases}
        -\Delta\widetilde{v} + H(D_x\widetilde{v},\mu) + \widetilde{\rho} = F(\mu,x) \\
        \underset{d(x) \to 0}{\lim} \widetilde{v}(x) = \infty
    \end{cases}
    $$
    Note that $w_{\varepsilon,\delta}^3 \coloneqq (\widetilde{f} + \varepsilon)(d - \delta)^{-\beta}$ satisfies
    $$-\Delta w_{\varepsilon,\delta}^3 + H(D_xw_{\varepsilon,\delta}^3,\mu) + \widetilde{\rho} \geq F(\mu,x)$$
    in $\Omega_\delta \setminus \Omega_{\delta_0}$ for some $\delta_0 = \delta_0(\varepsilon) > \delta$. Thus, there exist $C, M_\varepsilon \geq 0$ such that
    $$-C \leq \widetilde{v} \leq (\widetilde{f} + \varepsilon)d^{-\beta} + M_\varepsilon.$$
    Now note that
    $$-\Delta\widetilde{v} + H(D_x\widetilde{v},\mu) + \widetilde{v} = g$$
    where $g = \widetilde{v} + F - \widetilde{\rho} \in L^\infty_{loc}(\Omega)$ is bounded from below and satisfies $\underset{d(x) \to 0}{\lim} g(x)d(x)^q \in [0,\infty)$. By Theorem \ref{Thm: HJB with Locally Bounded Righthand Side}, we have
    \begin{equation} \label{Eq: Boundary Behavior 4}
        \begin{cases}
            \underset{d(x) \to 0}{\lim} \widetilde{v}(x)d(x)^\frac{2-q}{q-1} = (q-1)^\frac{q-2}{q-1}(q-2)^{-1}f_1(\mu)^{-\frac{1}{q-1}}, &1 \leq q < 2 \\
            \underset{d(x) \to 0}{\lim} \frac{\widetilde{v}(x)}{|\ln{d(x)}|} = \frac{1}{f_1(\mu)}, &q = 2
        \end{cases}
    \end{equation}

    Now we shift to proving uniqueness. To this end, suppose $(u_1,\rho_1), (u_2,\rho_2)$ are solutions to
    $$
    \begin{cases}
        -\Delta u_i + H(D_xu_i,\mu) + \rho_i = F(\mu,x) \\
        \underset{d(x) \to 0}{\lim} u_i(x) = \infty
    \end{cases}
    $$
    and suppose, without loss of generality, that $\rho_1 < \rho_2$. Then for $\varepsilon > 0$ and $\theta \in (0,1)$,
    $$
    \begin{aligned}
        -\Delta(\theta u_2) + H(D_x(\theta u_2),\mu) + \varepsilon\theta u_2 - \theta F(\mu,x) &= \varepsilon\theta u_2 - \theta\rho_2 + (H(D_x(\theta u_2),\mu) - \theta H(D_xu_2,\mu))
    \end{aligned}
    $$
    By \eqref{Eq: Boundary Behavior 4}, there is some $C_\theta > 0$ so that $\theta u_2 \leq u_1 + C_\theta$ in $\Omega$. Hence,
    $$
    \begin{aligned}
        &-\Delta(\theta u_2) + H(D_x(\theta u_2),\mu) + \varepsilon\theta u_2 \\
        &\qquad \leq \theta F(\mu,x) + \varepsilon u_1 - \rho_1 + (\rho_1 - \theta\rho_2) + (H(D_x(\theta u_2),\mu) - \theta H(D_xu_2,\mu)) + \varepsilon C_\theta \\
        &\qquad \leq F(\mu,x) + C(1 - \theta) + \varepsilon u_1 - \rho_1 + (\rho_1 - \theta\rho_2) + (H(D_x(\theta u_2),\mu) - \theta H(D_xu_2,\mu)) + \varepsilon C_\theta.
    \end{aligned}
    $$
    Choosing $\theta$ close enough to $1$ and $\varepsilon$ close to $0$ (depending on $\theta$), we get that $\theta u_2$ is a subsolution of
    $$-\Delta v + H(D_xv,\mu) + \varepsilon v = \varepsilon u_1 + F(\mu,x) -\rho_1.$$
    Thus, we have $\theta u_2 \leq u_1$ for $\theta$ sufficiently close to $1$. In particular, $u_2 \leq u_1$. However, since $u_2 + C$ satisfies the same equation for all $C \in \R$, this is a contradiction. Therefore, we have $\rho_1 = \rho_2 \eqqcolon \rho$.

    To show that $u_1 = u_2$ (up to a constant), choose $C_1 \in (0,\widetilde{f})$ and choose $\delta > 0$ such that
    $$-\Delta\left(\frac{C_1}{d^\beta}\right) + H\left(D_x\left(\frac{C_1}{d^\beta}\right),\mu\right) \leq F(\mu,x) -\rho$$
    in $\Omega \setminus \Omega_\delta$. Then for $\theta \in (0,1)$ and $w = \theta u_1 + (1-\theta)C_1d^{-\beta}$, we get
    $$
    \begin{aligned}
        -\Delta w + H(D_xw,\mu) &\leq \theta(F(\mu,x) - \rho) + (1-\theta)(F(\mu,x) - \rho) + H\left(D_x\left(\theta u_1 + (1-\theta)\frac{C_1}{d^\beta}\right),\mu\right) \\
        &\qquad - \theta H(D_xu_1,\mu) - (1-\theta)H\left(D_x\left(\frac{C_1}{d^\beta}\right),\mu\right) \\
        &\leq F(\mu,x) - \rho
    \end{aligned}
    $$
    in $\Omega \setminus \Omega_\delta$ by convexity. Since $\underset{d(x) \to 0}{\lim} (w-u_2) = -\infty$, the maximum principle gives us that
    $$\sup_{\Omega \setminus \Omega_\delta} (w-u_2) = \sup_{\B_\delta} (w-u_2)$$
    and hence (letting $\theta \to 1$)
    $$\sup_{\Omega \setminus \Omega_\delta} (u_1-u_2) = \sup_{\B_\delta} (u_1-u_2).$$
    Thus, applying the maximum principle on $\Omega_\delta$, we get
    $$\sup_{\Omega} (u_1-u_2) = \sup_{\B_\delta} (u_1-u_2).$$
    However, applying the maximum principle on $\Omega$, this implies that $(u_1 - u_2) \equiv (u_1 - u_2)(x_0)$.
\end{proof}

\subsection{Gradient Estimate \& Asymptotic Expansions}

Next, we obtain an a priori estimate for the gradient of $u_\lambda$, which immediately gives an estimate for the gradient of $u$. The argument used is similar to the one used for \cite[Theorem IV.1]{lasry1989nonlinear}.

\begin{theorem} \label{Thm: Gradient Estimate (Given mu)}
    Assume A\ref{A: F}-\ref{A: Lagrangian} and A\ref{A: Lower bound for L}-\ref{A: Bound for L} hold and let $u_\lambda$ be a solution to \eqref{Eq: Discounted HJB} for some $\lambda > 0$. Then there is some $C = C(f_1(\mu),f_3(\mu),\Lambda_{q'}(\mu),\Omega,q,\widetilde{q},n) > 0$ so that
    $$|D_xu_\lambda| \leq Cd(x)^{-\frac{1}{q-1}}.$$
\end{theorem}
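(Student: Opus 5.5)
The plan is to follow the Bernstein-type argument of \cite[Theorem IV.1]{lasry1989nonlinear}, combined with a scaling (blow-up) step near the boundary. The scaling step will turn an interior gradient estimate on balls into the weighted bound $|D_xu_\lambda|\le Cd^{-1/(q-1)}$. The key point is that the estimate must not depend on $\lambda$. Two facts matter here: the term $\lambda u_\lambda$ is controlled from below by the sub/supersolution bounds from the proof of Lemma \ref{Lem: Discounted HJB}, and only derivatives of $u_\lambda$ will enter after differentiation.

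\textbf{Step 1: local gradient bound on balls.} Fix $x_0\in\Omega$, set $R=d(x_0)/2$, and work in $B_{R}(x_0)$. I would first prove the following: if $w$ solves $-\Delta w+H(Dw,\mu)+\lambda w=F$ in $B_R$, then
\[
\sup_{B_{R/2}}|Dw|\le C\bigl(R^{-1/(q-1)}+1\bigr),
\]
with $C$ independent of $\lambda$ and of $\sup|w|$. Following Lions, I set $z=|Dw|^2$ and differentiate the equation in direction $e_k$. Summing against $\partial_k w$ gives
\[
-\Delta z+2|D^2w|^2+D_pH\cdot Dz+2\lambda z=2DF\cdot Dw.
\]
The $\lambda$ term has a good sign, and $|DF|\le C_0$ by A\ref{A: F}. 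The quantity $|D^2w|^2\ge(\Delta w)^2/n$ is bounded below using the equation itself: $(\Delta w)^2=(H(Dw)+\lambda w-F)^2$. By A\ref{A: Bounds for H}, $H(p)\ge f_1|p|^q-f_3(|p|^{\widetilde q}+1)$, so $H$ dominates $|p|^q$. The one difficulty is the term $\lambda w$, whose sign is not controlled. I would handle it by differentiating the equation satisfied by $\lambda w$, or more simply by using the bound from Step 2 below. Alternatively, I would take a cutoff $\zeta$ supported in $B_R$ with $|D\zeta|\le C/R$, $|D^2\zeta|\le C/R^2$, and apply the maximum principle to $\zeta z$. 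At a maximum point this yields an inequality of the form
\[
\frac{\zeta}{n}\bigl(f_1z^{q/2}-C\bigr)^2\le C\Bigl(\frac{z}{R^2}+\frac{z^{(q+1)/2}}{R}+z^{1/2}\Bigr),
\]
where the $z^{(q+1)/2}/R$ term comes from $|D_pH|\,|D\zeta|\,z$ together with \eqref{Eq: Bound for DpH-theta}. This forces $z\le CR^{-2/(q-1)}$. Since $q\le2$, the exponents balance exactly: $z^q$ against $z^{(q+1)/2}R^{-1}$ gives $z^{(q-1)/2}\lesssim R^{-1}$.

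\textbf{Step 2: controlling $\lambda u_\lambda$.} From \eqref{Eq: Min-Max Solutions}, $\lambda u_\lambda$ is bounded below uniformly, and on $B_R(x_0)$ it is bounded above by $C(1+d^{-\beta})$ in the case $q<2$ (and by $C(1+|\ln d|)$ when $q=2$). This bound involves $\lambda\le1$. To get an estimate fully independent of $\lambda$, I would avoid using the size of $\lambda u_\lambda$ in the Bernstein computation and exploit only its sign after differentiation, where it gives $2\lambda z\ge0$. In the squared-equation lower bound I would instead use the Cauchy--Schwarz form $|D^2w|^2\ge (\Delta w)^2/n$ only where $H(Dw)\ge 2|\lambda w|+2C_0$. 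Since $|\lambda w|\lesssim R^{-q/(q-1)}$ on the ball (for $q<2$, $d^{-\beta}\le d^{-q/(q-1)}$), this threshold is consistent with the target scale $|Dw|\sim R^{-1/(q-1)}$. All constants then depend only on $f_1(\mu)$, $f_3(\mu)$, $\Lambda_{q'}(\mu)$, $n$, $q$, $\widetilde q$, and $\Omega$.

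\textbf{Main obstacle.} The main obstacle is the absorption at the maximum point in Step 1 with the correct scaling in $R$. Specifically, the lower-order perturbation $G$ and the unsigned $\lambda w$ must not destroy the coercive $f_1^2 z^q$ term, and the cutoff error terms must close exactly at the rate $R^{-2/(q-1)}$. I would first try the cleanest route: rescale $w_R(y)=R^{\beta}w(x_0+Ry)$ for $q<2$ (and $w_R(y)=w(x_0+Ry)$ for $q=2$), reducing to the unit ball, where H\"older/Schauder-type bounds apply. Along the way I would check that $G$ rescales into a term that is lower order precisely because $\widetilde q<1<q$. Finally, taking $R=d(x_0)/2$ gives $|D_xu_\lambda(x_0)|\le Cd(x_0)^{-1/(q-1)}$ near $\B$, and the interior points are covered since $d$ is bounded below there.
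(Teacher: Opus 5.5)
Your strategy is the paper's. Both use a Bernstein argument for $|D_xu_\lambda|^2$ with a cutoff, the bound $|D^2_{xx}u|^2\ge(\Delta u)^2/n$ combined with the equation and A\ref{A: Bounds for H}, and the rescaling $r^{\gamma-1}u_\lambda(x_0+r\,\cdot)$ with $\gamma=1/(q-1)$, which is exactly your $R^{\beta}w(x_0+R\,\cdot)$. On the unit ball the bound must still come from the Bernstein computation, not from Schauder estimates, because before the gradient estimate no oscillation bound on $u_\lambda$ uniform in $\lambda$ is available.

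However, the cutoff step does not close as you wrote it. At a maximum point of $\zeta z$ you have $\zeta\Delta z\le -z\Delta\zeta+2z|D\zeta|^2/\zeta$. So a term $z|D\zeta|^2/\zeta$ appears, and it is not controlled by $|D\zeta|\le C/R$, $|D^2\zeta|\le C/R^2$ alone. More importantly, your displayed inequality has $\zeta$ on the left but no power of $\zeta$ on the right. Since the maximum of $\zeta z$ may sit where $\zeta$ is tiny, it only yields $\zeta z^{(q-1)/2}\lesssim R^{-1}$, i.e.\ $\zeta z\lesssim \zeta^{1-2/(q-1)}R^{-2/(q-1)}$. This is unbounded because $2/(q-1)\ge 2$. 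That is why the paper requires $|\Delta\varphi|\le M\varphi^{\theta}$ and $|D_x\varphi|^2\le M\varphi^{1+\theta}$ with $\theta>1/q$ (necessarily $\theta<1$). Multiplying the inequality by $\varphi^{q-1}$ then expresses every term in $Z=\varphi z$ alone: $Z^q\lesssim R^{-2}Z+R^{-1}Z^{(q+1)/2}$ plus lower-order terms. This needs $\theta\ge 2-q$, which $\theta>1/q$ guarantees, and it gives $Z\lesssim R^{-2/(q-1)}$.

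The second problem is your handling of $\lambda u_\lambda$. Step 2 relies on a two-sided bound $|\lambda w|\lesssim R^{-q/(q-1)}$, which you derive from \eqref{Eq: Min-Max Solutions} only for $\lambda\le 1$. Your threshold $H(Dw)\ge 2|\lambda w|+2C_0$ still depends on that size, so the claimed independence from $\lambda$ is not actually achieved. The missing observation is that only a lower bound is needed, because of the sign structure. If $\lambda u_\lambda\ge -K$, then $\Delta u_\lambda=H(D_xu_\lambda,\mu)+\lambda u_\lambda-F\ge H(D_xu_\lambda,\mu)-K-C_0$. So at the maximum point there are two cases. Either $H(D_xu_\lambda,\mu)<K+C_0$, and A\ref{A: Bounds for H} bounds $|D_xu_\lambda|$ directly. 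Or $(\Delta u_\lambda)^2\ge (H(D_xu_\lambda,\mu)-K-C_0)^2$, which is the coercive lower bound you need. This is how the paper uses the fact that $\lambda\widetilde u_\lambda$ is bounded from below. A lower bound valid for every $\lambda>0$ follows by evaluating the equation at an interior minimum point of $u_\lambda$, which exists since $u_\lambda\to\infty$ at $\B$; this gives $\lambda u_\lambda\ge \min_\Omega F(\mu,\cdot)-H(0,\mu)$.
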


\begin{proof}
    Let $x_0 \in \Omega$ and $r = \frac{1}{2}d(x_0,\B)$. Now consider $\widetilde{u}_\lambda(x) = r^{\gamma-1}u_\lambda(x_0+rx)$ on $B(0,1)$. Then $\widetilde{u}_\lambda$ solves
    $$-r^{(q-1)\gamma-1}\Delta\widetilde{u}_\lambda + r^{q\gamma}H(r^{-\gamma}D_x\widetilde{u}_\lambda,\mu) + \lambda r^{(q-1)\gamma+1}\widetilde{u}_\lambda = r^{q\gamma}F.$$
    Now define $\varphi \in C^\infty_c(B(0,1))$ satisfying
    \begin{equation}
        \begin{cases}
            0 \leq \varphi \leq 1, \qquad \varphi \equiv 1 \text{ on } B(0,1/2) \\
            |\Delta\varphi| \leq M\varphi^\theta, \qquad |D_x\varphi|^2 \leq M\varphi^{1+\theta}
        \end{cases}
    \end{equation}
    for some $M > 0$ and some $\theta$ to be chosen. We will assume $u$ is smooth to avoid the tedious approximation arguments. Letting $w_\lambda = |D_x\widetilde{u}_\lambda|^2$, we get
    $$
    \begin{aligned}
        &2\lambda r^{(q-1)\gamma + 1}\varphi w_\lambda + r^{(q-1)\gamma-1}\left(w_\lambda\Delta\varphi + 2\varphi|D_{xx}^2\widetilde{u}_\lambda|^2 + \frac{2}{\varphi}D_x\varphi \cdot D_x(\varphi w_\lambda)\right) - 2r^{q\gamma}\varphi D_xF \cdot D_xu_\lambda \\
        &= r^{(q-1)\gamma-1}\left(\Delta(\varphi w_\lambda) + \frac{2|D_x\varphi|^2}{\varphi}w_\lambda\right) + r^{(q-1)\gamma} D_pH(r^{-\gamma}D_x\widetilde{u}_\lambda,\mu) \cdot (w_\lambda D_x\varphi - D_x(\varphi w_\lambda))
    \end{aligned}
    $$
    on $\operatorname{supp}\varphi$. Letting $x_1 \in \operatorname{supp}\varphi$ be a maximum point for $\varphi w$, the maximum principle gives that
    $$
    \begin{aligned}
        2r^{(q-1)\gamma-1}\varphi|D_{xx}^2\widetilde{u}_\lambda|^2 &\leq -r^{(q-1)\gamma-1}w_\lambda\Delta\varphi + 2r^{(q-1)\gamma-1}\frac{|D_x\varphi|^2}{\varphi}w_\lambda + 2C_0r^{q\gamma}\varphi w_\lambda^{\frac{1}{2}} \\
        &\qquad+ r^{(q-1)\gamma} w_\lambda D_pH(r^{-\gamma}D_x\widetilde{u}_\lambda,\mu) \cdot D_x\varphi \\
        &\leq 3Mr^{(q-1)\gamma - 1}\varphi^\theta w_\lambda + MC_0(w_\lambda^{\frac{q+1}{2}} + r^{(q-1)\gamma}(1 + \Lambda_{q'}(\mu)))\varphi^\frac{1+\theta}{2} \\
        &\qquad + 2C_0r^{q\gamma}\varphi w_\lambda^{\frac{1}{2}}.
    \end{aligned}
    $$
    at $x_1$. From the Cauchy-Schwartz inequality, we get
    $$|D_{xx}^2\widetilde{u}_\lambda|^2 \geq \frac{1}{n}(\Delta\widetilde{u})^2 = \frac{1}{n}(r^{\gamma+1}H(r^{-\gamma}D_x\widetilde{u}_\lambda,\mu) + \lambda r^2\widetilde{u}_\lambda - r^{\gamma + 1}F)^2.
    $$
    Combining these results with A\ref{A: Bounds for H}, and using the fact that $\lambda\widetilde{u}_\lambda$ is bounded from below, we get
    $$
    \begin{aligned}
        \varphi w_\lambda^q &\leq C\Bigg(r^{2q\gamma} + \frac{1}{f_1(\mu)^2}r^{2(q-\widetilde{q})\gamma}\varphi w_\lambda^{\widetilde{q}}f_3(\mu)^2 + \frac{1}{f_1(\mu)^2}r^{2q\gamma}\varphi f_3(\mu)^2 + \frac{1}{f_1(\mu)^2}r^{2(q-1)\gamma - 2}\varphi^\theta w_\lambda \\
        &\qquad + \frac{1}{f_1(\mu)^2}\left(r^{(q-1)\gamma - 1}w_\lambda^\frac{q+1}{2} + r^{2(q-1)\gamma - 1}(1 + \Lambda_{q'}(\mu))\right)\varphi^\frac{1+\theta}{2}\Bigg).
    \end{aligned}
    $$
    Choosing $\theta > \frac{1}{q}$ and $\gamma \geq \frac{1}{q-1}$ gives
    $$\max_{B(0,1)} \varphi w_\lambda = \varphi(x_1)w_\lambda(x_1) \leq C(1 + f_3(\mu)^\frac{2q}{q-\widetilde{q}} + \Lambda_{q'}(\mu)).$$
    In particular, $w_\lambda(0) = \varphi(0)w_\lambda(0) \leq C(1 + f_3(\mu)^\frac{2q}{q-\widetilde{q}} + \Lambda_{q'}(\mu))$ and so
    $$|D_xu_\lambda| \leq C(1 + f_3(\mu)^\frac{q}{q-\widetilde{q}} + \Lambda_{q'}(\mu)^\frac{1}{2})r^{-\gamma}.$$
\end{proof}

Finally, in order to use some known results for the Fokker-Planck equation, we need to investigate the asymptotic behavior of the value function and its derivatives as $d(x) \to 0$. For this, we adapt the arguments used to prove \cite[Theorem II.3]{lasry1989nonlinear} and \cite[Proposition 3.2]{porretta2020mean}, respectively.

\begin{lemma}
    Assume A\ref{A: Hamiltonian}-\ref{A: F} and A\ref{A: Lower bound for L}-\ref{A: Bound for L} hold. Now let $u$ be a solution of \eqref{Eq: Ergodic HJB}.
    Then
    \begin{equation} \label{Eq: Second-Order Expansion of u}
        u = 
        \begin{cases}
            (q-1)^{2-q'}(2-q)^{-1}f_1(\mu)^{1-q'}d(x)^{2-q'} + O(d(x)^{3-q'}), &1 < q < \frac{3}{2} \\
            (q-1)^{2-q'}(2-q)^{-1}f_1(\mu)^{1-q'}d(x)^{2-q'} + O(|\ln{d(x)}|), &q = \frac{3}{2} \\
            (q-1)^{2-q'}(2-q)^{-1}f_1(\mu)^{1-q'}d(x)^{2-q'} + O(1), &\frac{3}{2} < q < 2 \\
            -\frac{1}{f_1(\mu)}\ln{d(x)} + O(1), &q = 2 \\
        \end{cases}
    \end{equation}
\end{lemma}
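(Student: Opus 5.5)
The plan is to sandwich $u$ near $\B$ between explicit sub- and supersolutions built from the one-dimensional profile plus a correction term, and then apply the comparison principle on the boundary layers $\Omega\setminus\Omega_\delta$, following \cite[Theorem II.3]{lasry1989nonlinear}. As in the rest of this section, assume $f_2(\mu)=0$ and write $\beta = q'-2 = \frac{2-q}{q-1}$. Let $c_0 = (q-1)^{2-q'}(2-q)^{-1}f_1(\mu)^{1-q'}$ for $q<2$, and $c_0 = 1/f_1(\mu)$ for $q=2$.

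\textbf{Step 1: the leading profile.} Let $\phi_0 = c_0 d^{-\beta}$ (resp. $-c_0\ln d$). This constant is chosen so that the most singular terms cancel in the principal part $-\Delta\phi + f_1(\mu)|D\phi|^q$. Indeed, in $\Gamma_{\varepsilon_0}$ we have $|Dd|=1$ and $\Delta d$ bounded, so a direct computation gives
$$-\Delta\phi_0 + f_1(\mu)|D\phi_0|^q = O(d^{-\beta-1}) = O(d^{1-q'}),$$
where the error comes only from the term $c_0\beta d^{-\beta-1}\Delta d$. For $q=2$ this error is $O(d^{-1})$. Next we control $G$ using A\ref{A: Bounds for H}: $|G(D\phi_0,\mu)| \le C(1+|D\phi_0|^{\widetilde q}) = O(d^{-\widetilde q(q'-1)})$. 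Since $\widetilde q<1$, this is of lower order than $d^{-q'}$. Together with the bounded terms $F$ and $\rho$, the residual of $\phi_0$ in \eqref{Eq: Ergodic HJB} is therefore $O(d^{-\kappa})$ with $\kappa = \max\{q'-1,\ \widetilde q(q'-1)\} = q'-1 < q'$.

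\textbf{Step 2: the corrector.} Try $\phi_\pm = \phi_0 \pm K\psi$, with $\psi$ chosen according to $q$: $\psi = d^{3-q'}$ when $q<3/2$ (so $3-q'<0$), $\psi=|\ln d|$ when $q=3/2$, and $\psi\equiv$ const, or more precisely $\psi = -d^{\eta}$ for a small $\eta>0$, when $q>3/2$ and when $q=2$. Linearizing, the corrector contributes
$$-\Delta(K\psi) + q f_1|D\phi_0|^{q-2}D\phi_0\cdot D(K\psi) \approx K\bigl(-\psi'' - q'\,\psi'/d\bigr),$$
using $q f_1|D\phi_0|^{q-1} = q'/d$, which mirrors A\ref{A: Asymptotics of DpH}.

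For $\psi=d^{s}$ this operator gives $-s(s-1+q')d^{s-2}$. With $s=3-q'$ we get $d^{s-2}=d^{1-q'}$, which exactly matches the residual order. The coefficient $s(s+q'-1) = 2(3-q')$ is nonzero and has a definite sign, so taking $K$ large absorbs the residual. When $q=3/2$ the exponent $s$ is $0$, and the log corrector produces the required $d^{-2}\cdot d = d^{-1}$ order, consistent with $q'-1=2$ and $d^{1-q'}=d^{-2}$; this sign check has to be redone carefully. When $q>3/2$ the residual $d^{1-q'}$ is weaker than $d^{-2}$, so a corrector $d^{\eta}$ with small $\eta>0$ dominates it. That corrector is bounded, which gives the $O(1)$ error. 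The case $q=2$ is analogous with residual $O(d^{-1})$.

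The nonlinear remainder of the expansion of $|D\phi_0 + KD\psi|^q$ is $O(|D\psi|^2|D\phi_0|^{q-2})$, which is of lower order. The Hölder condition A\ref{A: Regularity of G} handles $G(D\phi_\pm)-G(D\phi_0)$ via $|KD\psi|^{\widetilde\alpha}$, and the restrictions on $\widetilde\alpha$ are exactly what make this term lower order than the main corrector term. Checking this is the step I expect to be the main obstacle, since it requires tracking the exponents case by case.

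\textbf{Step 3: comparison.} On $\Gamma_\delta$, with $\delta$ small, $\phi_+ + M$ is a supersolution and $\phi_- - M$ a subsolution, where $M$ is chosen so that the inequalities hold on $\{d=\delta\}$, which is possible because $u$ is bounded there. Since $u$ and $\phi_\pm$ have the same leading blow-up by Theorem \ref{Thm: HJB has a solution}, the boundary comparison at $\B$ is not immediate. To handle it, I would use the $\theta$-convexity trick from the uniqueness proofs above: compare $u$ with $\theta\phi_+ + C_\theta$ (and $\theta u$ with $\phi_-$) using \eqref{Eq: First Consequence of Convexity}. The point is that $\theta\phi_0 - u\to-\infty$ at $\B$ for $\theta<1$, so the maximum principle applies in the boundary layer; then let $\theta\to1$. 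This yields $|u-\phi_0| \le K|\psi|+M$ in $\Gamma_\delta$, which is \eqref{Eq: Second-Order Expansion of u}.
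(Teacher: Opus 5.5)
Your barriers are essentially the paper's. Both use the profile $c_0d^{2-q'}$ (resp.\ $-f_1(\mu)^{-1}\ln d$) plus a corrector whose linearized contribution $-K(\psi''+q'\psi'/d)$ matches the residual order $d^{1-q'}$. Your $-d^{\eta}$, $0<\eta<3-q'$, in place of the paper's $-A_1d^{3-q'}$ for $q>3/2$ is a harmless variant.

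\textbf{The problem is Step~3.} With $\theta<1$ as you state, the two comparisons you list go the wrong way. Since $u/\phi_0\to1$ and $\phi_\pm/\phi_0\to1$ at $\B$, one has $u-\theta\phi_+\to+\infty$ and $\theta u-\phi_-\to-\infty$. So neither $u\le\theta\phi_++C_\theta$ nor $\theta u\ge\phi_-$ can hold. Moreover, convexity only gives $H(\theta p,\mu)\le\theta H(p,\mu)+(1-\theta)H(0,\mu)$. That makes $\theta$ times a \emph{subsolution} an approximate subsolution, but gives no lower bound that would make $\theta\phi_+$ a supersolution.

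The fix is to scale the subsolution side: take $w=\theta u-\phi_+$ for the upper bound and $w=\theta\phi_--u$ for the lower bound. The latter is what your final sentence actually describes. Equivalently, you can put $\theta^{-1}$ on the supersolution side. Then $w\to-\infty$ at $\B$, and in $\{0<d<\delta\}$ you get $-\Delta w+b\cdot D_xw\le C(1-\theta)-c\,d^{1-q'}$ (resp.\ $-c\,d^{\eta-2}$ for $q>3/2$), with $b$ locally bounded.

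Because the ergodic operator has no zeroth-order term, this $O(1-\theta)$ error cannot be removed by adding a constant. It must be absorbed by the strict, blowing-up margin your correctors leave once $K$ is large, and you should say so explicitly. The maximum principle on $\{\varepsilon<d<\delta\}$, then $\varepsilon\to0$ and $\theta\to1$, gives $\phi_--M\le u\le\phi_++M$.

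\textbf{Comparison with the paper.} The paper views $u$ as the blow-up solution of the discounted problem $-\Delta v+H(D_xv,\mu)+v=u+F(\mu,x)-\rho$. The zeroth-order term lets the constant $A_2$ absorb every bounded error, and comparison comes from the discounted theory. The cost is carrying $u$ itself in the datum, which is controlled through the first-order asymptotics of Theorem~\ref{Thm: HJB has a solution}. Your direct route avoids that unbounded datum, but it needs the $\theta$-argument done as above.

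\textbf{Two smaller points.}
\begin{itemize}
\item You do not need A\ref{A: Regularity of G}, and it is not among this lemma's hypotheses. A\ref{A: Bounds for H} with $\widetilde q<1$ already gives $|G(D_x\phi_\pm,\mu)|=O(d^{-\widetilde q(q'-1)})=o(d^{1-q'})$, exactly as you observed for $\phi_0$. So the step you flagged as the main obstacle is not one.
\item For $q=3/2$, $\psi=|\ln d|$ gives $-\psi''-q'\psi'/d=2d^{-2}$. This is exactly the residual order $d^{1-q'}=d^{-2}$, with the right sign for the supersolution, not order $d^{-1}$.
\end{itemize}
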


\begin{proof}
    As in the proof of \cite[Theorem II.3]{lasry1989nonlinear}, it suffices to find appropriate sub- and super-solutions to
    \begin{equation}
        \mathcal{L}[v] = u + F(\mu,x) - \rho
    \end{equation}
    where $\mathcal{L}[v] \coloneqq -\Delta v + H(D_xv,\mu) + v$.
    We claim that for sufficiently large constants $A_1$ and $A_2$ (depending on $A_1$ and $q$),
    \begin{equation}
        w_\varepsilon^+ =
        \begin{cases}
            (q-1)^{2-q'}(2-q)^{-1}f_1(\mu)^{1-q'}d^{2-q'} + A_1d^{3-q'} + A_2, &1 < q < \frac{3}{2} \\
            (q-1)^{2-q'}(2-q)^{-1}f_1(\mu)^{1-q'}d^{-1} - A_1\ln{d} + A_2, &q = \frac{3}{2} \\
            (q-1)^{2-q'}(2-q)^{-1}f_1(\mu)^{1-q'}d^{2-q'} - A_1d^{3-q'} + A_2, &\frac{3}{2} < q < 2 \\
            -\frac{1}{f_1(\mu)}\ln{d} - A_1d^{3-q'} + A_2, &q = 2
        \end{cases}
    \end{equation}
    is a super-solution and
    \begin{equation}
        w_\varepsilon^- =
        \begin{cases}
            (q-1)^{2-q'}(2-q)^{-1}f_1(\mu)^{1-q'}d^{2-q'} - A_1d^{3-q'} - A_2, &1 < q < \frac{3}{2} \\
            (q-1)^{2-q'}(2-q)^{-1}f_1(\mu)^{1-q'}d^{-1} + A_1\ln{d} - A_2, &q = \frac{3}{2} \\
            (q-1)^{2-q'}(2-q)^{-1}f_1(\mu)^{1-q'}d^{2-q'} + A_1d^{3-q'} - A_2, &\frac{3}{2} < q < 2 \\
            -\frac{1}{f_1(\mu)}\ln{d} + A_1d^{3-q'} - A_2, &q = 2
        \end{cases}
    \end{equation}
    is a sub-solution. Since $q' < 3$ for $q > \frac{3}{2}$, this would be sufficient to prove the theorem.

    First, we recall that the map $x \mapsto |x|^q$ is convex and hence for $a,b \in \R^n$, we have
    \begin{equation} \label{Eq: Inequalities by Convexity}
        |a|^q + q|a|^{q-2}a \cdot b \leq |a+b|^q \leq |a|^q + q|a+b|^{q-2}(a+b) \cdot b.
    \end{equation}
    We will only prove the first case (i.e.~$1 < q < \frac{3}{2}$) as the others follow by very similar arguments.
    Note that in $\Gamma_\delta$ for $\delta > 0$ small enough,
    $$D_xw_\varepsilon^+ = -\left[(q-1)^{1-q'}f_1(\mu)^{1-q'}d^{1-q'} + A_1(q'-3)d^{2-q'}\right]D_xd$$
    and
    $$
    \begin{aligned}
        \Delta w_\varepsilon^+ &= (q-1)^{-q'}f_1(\mu)^{1-q'}d^{-q'} + A_1(q'-3)(q'-2)d^{1-q'} \\
        &\qquad - \left[(q-1)^{1-q'}f_1(\mu)^{1-q'}d^{1-q'} + A_1(q'-3)d^{2-q'}\right]\Delta d,
    \end{aligned}
    $$
    where we use that $|D_xd| = 1$ in $\Gamma_\delta$. Thus, A\ref{A: Bounds for H} gives
    $$
    \begin{aligned}
        \mathcal{L}[w_\varepsilon^+] &\geq -(q-1)^{-q'}f_1(\mu)^{1-q'}d^{-q'} - A_1(q'-3)(q'-2)d^{1-q'} \\
        &\qquad + \left[(q-1)^{1-q'}f_1(\mu)^{1-q'}d^{1-q'} + A_1(q'-3)d^{2-q'}\right]\Delta d \\
        &\qquad + f_1(\mu)\left|(q-1)^{1-q'}f_1(\mu)^{1-q'}d^{1-q'} + A_1(q'-3)d^{2-q'}\right|^q \\
        &\qquad -\left(\left|(q-1)^{1-q'}f_1(\mu)^{1-q'}d^{1-q'} + A_1(q'-3)d^{2-q'}\right|^{\widetilde{q}} + 1\right)f_3(\mu) \\
        &\qquad + (q-1)^{2-q'}(2-q)^{-1}f_1(\mu)^{1-q'}d^{2-q'} + A_1d^{3-q'} + A_2.
    \end{aligned}
    $$
    Recalling that $q(1-q') = -q'$ and $(1-q')(q-1) = -1$, \eqref{Eq: Inequalities by Convexity} gives
    $$
    \begin{aligned}
        &\left|(q-1)^{1-q'}f_1(\mu)^{1-q'}d^{1-q'} + A_1(q'-3)d^{2-q'}\right|^q \\
        &\geq (q-1)^{q(1-q')}f_1(\mu)^{q(1-q')}d^{q(1-q')} + A_1q(q'-3)(q-1)^{(1-q')(q-1)}f_1(\mu)^{(1-q')(q-1)}d^{(1-q')(q-1) + 2 - q'} \\
        &= (q-1)^{-q'}f_1(\mu)^{-q'}d^{-q'} + A_1q'(q'-3)f_1(\mu)^{-1}d^{1-q'}
    \end{aligned}
    $$
    Using Young's inequality and the fact that $u + F \leq C(d^{2-q'} + 1)$, for all $\varepsilon > 0$, we get
    $$
    \begin{aligned}
        \mathcal{L}[w_\varepsilon^+] &\geq \left[A_1(q' - (q'-2))(q'-3) + (q-1)^{1-q'}f_1(\mu)^{1-q'}\Delta d - \varepsilon\right]d^{1-q'}  - C_\varepsilon + A_2 \\
        &= \left[2A_1(q'-3) + (q-1)^{1-q'}f_1(\mu)^{1-q'}\Delta d - \varepsilon\right]d^{1-q'} - C_\varepsilon + A_2 \\
        &\geq \left[2A_1(q'-3) - (q-1)^{1-q'}f_1(\mu)^{1-q'}\|\Delta d\|_\infty - 2\varepsilon\right]d^{1-q'} + u - \rho - \widetilde{C}_\varepsilon + A_2 \\
        &\geq u + F(\mu,x) - \rho
    \end{aligned}
    $$
    provided $A_1 \geq \frac{1}{2}(q'-3)^{-1}(q-1)^{1-q'}f_1(\mu)^{1-q'}\|\Delta d\|_\infty + \varepsilon(q'-3)^{-1}$ and $A_2 \geq \widetilde{C}_\varepsilon$. Similarly, we get
    $$
    \begin{aligned}
        \mathcal{L}[w_\varepsilon^-] &\leq -(q-1)^{-q'}f_1(\mu)^{1-q'}d^{-q'} + A_1(q'-3)(q'-2)d^{1-q'} \\
        &\qquad + \left[(q-1)^{1-q'}f_1(\mu)^{1-q'}d^{1-q'} - A_1(q'-3)d^{2-q'}\right]\Delta d \\
        &\qquad + f_1(\mu)\left|(q-1)^{1-q'}f_1(\mu)^{1-q'}d^{1-q'} + A_1(q'-3)d^{2-q'}\right|^q \\
        &\qquad +\left(\left|(q-1)^{1-q'}f_1(\mu)^{1-q'}d^{1-q'} + A_1(q'-3)d^{2-q'}\right|^{\widetilde{q}} + 1\right)f_3(\mu) \\
        &\qquad + (q-1)^{2-q'}(2-q)^{-1}f_1(\mu)^{1-q'}d^{2-q'} - A_1d^{3-q'} - A_2
    \end{aligned}
    $$
    and
    $$
    \begin{aligned}
        &\left|(q-1)^{1-q'}f_1(\mu)^{1-q'}d^{1-q'} - A_1(q'-3)d^{2-q'}\right|^q \\
        &\leq (q-1)^{q(1-q')}f_1(\mu)^{q(1-q')}d^{q(1-q')} \\
        &\qquad- A_1q(q'-3)d^{2-q'}\left|(q-1)^{1-q'}f_1(\mu)^{1-q'}d^{1-q'} - A_1(q'-3)d^{2-q'}\right|^{q-1} \\
        &\leq (q-1)^{-q'}f_1(\mu)^{-q'}d^{-q'} - A_1q'(q'-3)f_1(\mu)^{-1}d^{1-q'} + A_1^q q(q'-3)^q d^{q-q'}
    \end{aligned}
    $$
    in $\Gamma_\delta$ for $\delta$ sufficiently small, where the last inequality follows from the fact that $|a+b|^{q-1} \geq |a|^{q-1} - |b|^{q-1}$ for $a,b \in \R^n$. Thus, since $u + F \geq C$, for all $\varepsilon > 0$, we get
    $$
    \begin{aligned}
        \mathcal{L}[w_\varepsilon^-] &\leq \left[-2A_1(q'-3) + (q-1)^{1-q'}f_1(\mu)^{1-q'}\Delta d + \varepsilon\right]d^{1-q'} + C_\varepsilon - A_2 \\
        &\leq \left[-2A_1(q'-3) + (q-1)^{1-q'}f_1(\mu)^{1-q'}\|\Delta d\|_\infty + \varepsilon\right]d^{1-q'} + u - \rho + \widetilde{C}_\varepsilon - A_2 \\
        &\leq u + F(\mu,x) - \rho
    \end{aligned}
    $$
    provided $A_1 \geq \frac{1}{2}(q'-3)^{-1}(q-1)^{1-q'}f_1(\mu)^{1-q'}\|\Delta d\|_\infty + \frac{\varepsilon}{2}(q'-3)^{-1}$ and $A_2 \geq \widetilde{C}_\varepsilon$.
\end{proof}

\begin{lemma} \label{Lem: Asymptotic Behavior of Derivatives}
    Assume A\ref{A: F}-\ref{A: Regularity of G} hold and let $(u,\rho)$ be a solution of \eqref{Eq: Ergodic HJB}. Then for $1 < q < 2$, we have
    \begin{equation} \label{Eq: Second-Order Expansion of Du}
        D_xu(x) = (f_1(\mu)(q-1)d(x))^{1-q'}[\n(x) + O(\omega_q(d(x)))]
    \end{equation}
    as $d(x) \to 0$, where
    $$\omega_q(\delta) \coloneqq
    \begin{cases}
        \delta, &1 < q < \frac{3}{2} \\
        \delta|\ln{\delta}|, & q = \frac{3}{2} \\
        \delta^{q' - 2}, &\frac{3}{2} < q < 2
    \end{cases}
    $$
    and if A\ref{A: q=2} holds, then we have
    $$D_xu(x) = (f_1(\mu)(q-1)d(x))^{-1}[\n(x) + O(d(x))]$$
    as $d(x) \to 0$ for $q=2$.
    Furthermore, we have
    $$\lim_{x \to x_0 \in \B} d(x)^{q'} D_{xx}^2u(x) = f_1(\mu)^{1-q'}(q-1)^{-q'}\n(x_0) \otimes \n(x_0),$$
    and for $1 < q < 2$, we have
    \begin{equation}
        D_{xx}^2u(x) = f_1(\mu)^{1-q'}(q-1)^{-q'}d(x)^{-q'}[\n(x) \otimes \n(x) + O(\omega_q(d(x)))].
    \end{equation}
\end{lemma}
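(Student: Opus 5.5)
The strategy is the blow-up (rescaling) argument of \cite[Proposition 3.2]{porretta2020mean}, combined with the expansion \eqref{Eq: Second-Order Expansion of u} of the previous lemma. Throughout, $f_2(\mu)=0$ is not assumed. Instead we work with $v=u+x\cdot f_2(\mu)$, whose gradient differs from $D_xu$ by a constant. That constant is $O(1)=O(d^{q'-1}\omega_q(d))\cdot d^{1-q'}$, since $q'-1\ge 1$, so it is absorbed into the error term.

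\textbf{Rescaling.} Fix $x_0\in\Gamma_{\delta}$ with $\delta$ small, set $r=d(x_0)/2$, and define the remainder
\[
z=u-\Phi,\qquad \Phi=(q-1)^{2-q'}(2-q)^{-1}f_1(\mu)^{1-q'}d^{2-q'}
\]
(or $\Phi=-f_1(\mu)^{-1}\ln d$ when $q=2$). By \eqref{Eq: Second-Order Expansion of u}, $|z|\le C\,r\,\omega_q(r)\,r^{1-q'}$ on $B(x_0,r)$; note $d^{3-q'}=d\cdot d^{2-q'}$, and the other cases are analogous. Subtracting the equations for $u$ and $\Phi$ gives
\[
-\Delta z+f_1(\mu)\bigl(|D\Phi+Dz|^q-|D\Phi|^q\bigr)=h .
\]
The right side $h$ collects $\Delta\Phi-f_1|D\Phi|^q$, which is $O(d^{1-q'})$ because the leading terms cancel exactly (the same computation as for $w^\pm_\varepsilon$), together with $F-\rho-G(Du,\mu)$.

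\textbf{Linearized equation and interior estimates.} Write the difference of powers as $b(x)\cdot Dz$, where $b=q\int_0^1|D\Phi+sDz|^{q-2}(D\Phi+sDz)\,ds$. By Theorem~\ref{Thm: Gradient Estimate (Given mu)}, $|b|\le Cr^{-1}$ on $B(x_0,r)$. Rescale with $\tilde z(y)=z(x_0+ry)$ on $B(0,1)$. Then $-\Delta\tilde z+r\,b\cdot D\tilde z=r^2h$, and the drift $r\,b$ is bounded. Interior $W^{2,p}$ and Schauder estimates (using local H\"older continuity of $b$, obtained by bootstrap from $u\in W^{2,p}_{loc}$) give
\[
\|D\tilde z\|_{L^\infty(B_{1/2})}\le C\bigl(\|\tilde z\|_\infty+r^2\|h\|_\infty\bigr).
\]

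\textbf{Controlling $h$ (main obstacle).} The difficulty is the $G$-term, because $|G(Du,\mu)|\le C(r^{-\widetilde q/(q-1)}+1)$ can be of larger order than the target error. I would first try to show $r^2|G|\lesssim r^{2-\widetilde q(q'-1)}$, and then verify that this is dominated by $r\,\omega_q(r)\,r^{1-q'}$. This is exactly where the restriction on $\widetilde q$ in A\ref{A: Bounds for H} is used. If it is insufficient, the fallback is to exploit the H\"older condition A\ref{A: Regularity of G}. One splits $G(Du)=G(D\Phi)+[G(Du)-G(D\Phi)]$. The first piece is an explicit function of $x$ and can be absorbed into $\Phi$ as a correction. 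The second is bounded by $f_3|Dz|^{\widetilde\alpha}$, which is sublinear in $Dz$ and can be absorbed by a Young/iteration argument on dyadic scales. The upper bound on $\widetilde\alpha$ is precisely the condition that makes the exponents close.

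\textbf{Conclusion.} Undoing the scaling gives $|Dz(x_0)|\le C\omega_q(r)r^{1-q'}$. Together with $D\Phi=-(f_1(q-1)d)^{1-q'}Dd$ and $Dd=-\n$ near $\B$, this yields \eqref{Eq: Second-Order Expansion of Du}. Differentiating the rescaled equation once more, with Schauder estimates, gives $|D^2z|\le C\omega_q(r)r^{-q'}$. Since
\[
D^2\Phi=f_1^{1-q'}(q-1)^{-q'}d^{-q'}\bigl(\n\otimes\n+O(d)\bigr),
\]
this proves the Hessian expansion. For $q=2$, A\ref{A: q=2} allows the Hopf--Cole substitution $w=e^{-\psi u}$, which turns the equation into a linear one, $-\Delta w+\ldots=0$ with $w$ vanishing linearly at $\B$. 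Boundary regularity for this linear equation then gives $w=c\,d\,(1+O(d))$ in $C^2$, and hence both statements by differentiating $u=-\psi^{-1}\ln w$. The Hessian limit for $q=2$ follows in the same way, since $o(1)$ suffices there.
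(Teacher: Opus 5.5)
\textbf{Verdict: gap.} Your argument for $1<q<2$ is essentially sound, but the Hessian limit for $q=2$ is only proved under A\ref{A: q=2}, and the statement claims it without that assumption.

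\textbf{The case $1<q<2$.} You organize this differently from the paper. You compare $u$ directly with $\Phi=c\,d^{2-q'}$ on Whitney balls $B(x_0,d(x_0)/2)$ and feed the sup-norm expansion \eqref{Eq: Second-Order Expansion of u} into interior estimates, with no blow-up or compactness step. The paper instead blows up at $x_0\in\B$, identifies the half-space profile, and compares the rescaled solution with that \emph{flat} profile. Three points on this part:
\begin{itemize}
\item Your ``first try'' for $G$ already closes. Since $\widetilde q<1$, $r^{2-\widetilde q(q'-1)}\le Cr^{2-q'}\omega_q(r)$ in all three regimes; for $q<3/2$ the exponent gap is $(q'-1)(1-\widetilde q)>0$. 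The proposed fallback is unnecessary.
\item A\ref{A: Regularity of G} is needed only in the Hessian step. There it bounds the $C^{\widetilde\alpha}$ seminorm of $r^2G(D_xu(x_0+r\,\cdot))$ by $Cr^{2+(1-q')\widetilde\alpha}$, using the scale-invariant bound $|D^2_{xx}u|\le Cd^{-q'}$ obtained from Schauder estimates at scale $d$. This is within budget for any $\widetilde\alpha\le 1$.
\item Linearizing only $f_1|p|^q$ buys you something real. The drift $rb$ is uniformly $C^{q-1}$ because $p\mapsto|p|^{q-2}p$ is globally $(q-1)$-H\"older. The paper's $V_\delta$ involves $D_pG$, on which no H\"older regularity is assumed. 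The paper's choice has its own advantage, though. Your source term contains $c\,d^{1-q'}\Delta d$, which is merely continuous when $\B$ is only $C^2$. So your Schauder step needs $\B\in C^{2,\beta}$, or a switch to the flat profile, where curvature enters only through $\|v-v_\delta\|_\infty$.
\end{itemize}

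\textbf{The gap at $q=2$.} The statement asserts $\lim_{x\to x_0}d(x)^{q'}D^2_{xx}u(x)=f_1(\mu)^{1-q'}(q-1)^{-q'}\n(x_0)\otimes\n(x_0)$ under A\ref{A: F}--A\ref{A: Regularity of G} for every $q\in(1,2]$. A\ref{A: q=2} is invoked only for the $O(d)$ gradient rate. You obtain the $q=2$ Hessian limit only through Hopf--Cole, which requires A\ref{A: q=2}. Without it, your linearization cannot work. For $q=2$ the previous lemma gives only $z=u+f_1(\mu)^{-1}\ln d=O(1)$, which is as large as the oscillation of $\Phi$ on a Whitney ball. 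Interior estimates then give $|D^2_{xx}z|=O(d^{-2})$, the same order as $D^2\Phi$, not $o(d^{-2})$.

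The missing idea is the paper's qualitative step:
\begin{itemize}
\item Blow up at scale $\delta$ all the way to the boundary.
\item Extract a $C^2_{loc}$ limit on $\{\xi_1>0\}$ solving $-\Delta v+f_1|D_\xi v|^2=0$.
\item Observe that $e^{-f_1v}$ is positive and harmonic with $e^{-f_1v}\le C\xi_1$, hence equals $\lambda\xi_1$.
\item Use uniqueness of $D_\xi v$ and $D^2_{\xi\xi}v$ to get convergence of the whole family, which gives the Hessian limit.
\end{itemize}

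Even under A\ref{A: q=2}, one step needs more justification. Your claim that boundary regularity gives $w=c\,d(1+O(d))$ requires $w/d$ to be Lipschitz up to $\B$, essentially $w\in C^{1,1}(\overline\Omega)$. This does not follow from the $W^{2,p}$ theory available for a $C^2$ boundary, which yields only an $O(d^\beta)$ rate.
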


\begin{proof}
    Let $\delta_0 > 0$ be sufficiently small so that $d(x) = d(x,\B)$ in $\Gamma_{\delta_0}$. Next, we fix $x_0 \in \B$ and consider a new orthonormal basis $\{v_1,\dots,v_n\}$ for $\R^n$ with $v_1 = -\n(x_0)$. We will use $(y_1,\dots,y_n)$  to denote the related system of coordinates centered at $x_0$. In these coordinates, letting $0 < \delta < \delta_0$, $0 < \zeta < \frac{1}{2}$, and $O_{\delta_0} = (\delta_0,0,\dots,0)$, we define
    $$D_\delta \coloneqq B_{\delta^{1-\zeta}} \cap  B_{\delta_0}(O_{\delta_0}).$$
    Since $\zeta > 0$, we have $\frac{1}{\delta}D_\delta \to \R^n_+ \coloneqq \{y \in \R^n : y_1 >0\}$ as $\delta \to 0$. Making another change of variables, we define $\xi = \frac{y}{\delta}$ and
    $$v_\delta(\xi) =
    \begin{cases}
        (q-1)^{q'-1}\delta^{q'-2}u(\delta\xi), &q < 2 \\
        u(\delta\xi) + \frac{1}{f_1(\mu)}\ln{\delta}, &q = 2
    \end{cases}$$
    By \eqref{Eq: Second-Order Expansion of u}, we get that $v_\delta$ is locally bounded for $\xi_1 > 0$, uniformly in $\delta$. Moreover, $v_\delta$ satisfies the equation
    \begin{equation} \label{Eq: v-delta}
        -\Delta v_\delta(\xi) + (q-1)^{q'-1}\delta^{q'}H((q-1)^{1-q'}\delta^{1-q'}D_\xi v_\delta,\mu) = (q-1)^{q'-1}\delta^{q'}(F - \rho)
    \end{equation}
    for $\xi \in \frac{1}{\delta}D_\delta$, and $\delta^{q'}H((q-1)^{1-q'}\delta^{1-q'}D_\xi v_\delta,\mu)$ is locally bounded by Theorem \ref{Thm: Gradient Estimate (Given mu)}. By elliptic regularity, we get that $v_\delta$ is locally bounded in $C^{2+\beta}$. Using relative compactness and a diagonal argument, there exists a function $v \in C^2_{loc}$ and a subsequence $v_{\delta_k}$ converging to $v$ locally in $C^2$ for all. Passing to the limit, we have
    \begin{equation} \label{Eq: Limit of v-delta}
        -\Delta v(\xi) + f_1(\mu)(q-1)^{-1}|D_\xi v(\xi)|^q = 0.
    \end{equation}

    For $q < 2$, we use \eqref{Eq: Boundary Behavior of u} to obtain
    $$\lim_{\delta \to 0} (\delta\xi_1)^{q'-2}u(\delta\xi) = (q-1)^{2-q'}(2-q)^{-1}f_1(\mu)^{-\frac{1}{q-1}}$$
    which implies that
    $$\lim_{\delta \to 0} v_{\delta}(\xi) = \frac{q-1}{2-q}f_1(\mu)^{-\frac{1}{q-1}}\xi_1^{2-q'}.$$
    For $q = 2$, we recall that $u(x) + \frac{1}{f_1(\mu)}\ln{\delta}$ is bounded. Thus, $w = e^{-f_1(\mu)v}$ is positive and harmonic on $\{\xi \in \R^n : \xi_1 > 0\}$ with $w \leq C\xi_1$ for some $C > 0$. Therefore, $w = \lambda\xi_1$ for some $\lambda > 0$, and hence
    $$\lim_{k \to \infty} v_{\delta_k}(\xi) = -\frac{1}{f_1(\mu)}\ln{\xi_1} - \widetilde{\lambda}.$$
    By uniqueness, we get the convergence of the sequences $D_xv_\delta, D_{xx}^2v_\delta$. In particular,
    $$\lim_{\delta \to 0} D_\xi v_\delta = D_\xi v = -f_1(\mu)^{-\frac{1}{q-1}}\xi_1^{1-q'}\vec{e}_1$$
    where $\vec{e}_1 = (1,0,\dots,0)$, and
    $$\lim_{\delta \to 0} D_{\xi\xi}^2 v_\delta = D_{\xi\xi}^2 v =
    (q-1)^{-1}f_1(\mu)^{-\frac{1}{q-1}}\xi_1^{-q'}\begin{pmatrix}
        1 & 0 & \cdots & 0 \\
        0 & \ddots & & \vdots \\
        \vdots & & & \\
        0 & \cdots & & 0
    \end{pmatrix}.$$
    Since $D_\xi u(\delta\xi) = (q-1)^{1-q'}\delta^{1-q'}D_\xi v_\delta(\xi)$ and $D_{\xi\xi}^2 u(\delta\xi) = (q-1)^{1-q'}\delta^{-q'}D_{\xi\xi}^2 v_\delta(\xi)$, it follows that $\underset{y_1 \to 0}{\lim} \partial_{y_i}u(y) = 0$ for $i \neq 1$, $\underset{y_1 \to 0}{\lim} \partial_{y_iy_j}^2u(y) = 0$ for $(i,j) \neq (1,1)$. Moreover, choosing $\xi = \vec{e}_1$ gives
    $$\lim_{\delta \to 0} \delta^{q'-1} D_{y_1}u(\delta,0,\dots,0) = -(q-1)^{1-q'}f_1(\mu)^{-\frac{1}{q-1}}$$
    and
    $$\lim_{\delta \to 0} \delta^{q'} D_{y_1y_1}^2u(\delta,0,\dots,0) = (q-1)^{-q'}f_1(\mu)^{-\frac{1}{q-1}}.$$
    As $\partial_{y_1}u(\delta,0,\dots,0) = -\partial_{\n(x_0)}u(x_0 - \delta\n(x_0))$ and $\partial_{y_1y_1}^2u(\delta,0,\dots,0) = \partial_{\n(x_0)\n(x_0)}^2 u(x_0-\delta\n(x_0))$, we now have the ``first-order expansions" for $D_xu$ and $D_{xx}^2u$.
    
    What remains is to prove the ``second-order expansions". First, we note that if $q = 2$ and A\ref{A: q=2} holds, then $\widetilde{u} \coloneq \psi(\mu)(u - x \cdot \varphi(\mu))$ satisfies
    $$-\Delta \widetilde{u} + |D_x\widetilde{u}|^2 + \psi(\mu)\rho = \psi(\mu)F(\mu,x) -\psi(\mu)V(\mu)$$
    and so
    $$D_xu = \psi(\mu)^{-1}D_x\widetilde{u} + \varphi(\mu) = \frac{1}{\psi(\mu)d(x)}[\n(x) + O(1)]$$
    as $d(x) \to 0$ by \cite[Equation (3.5)]{porretta2023ergodic}.
    
    Now assume $q < 2$. We set $\widetilde{v}(\xi) = v(\xi) - \delta^{q'-1}(q-1)^{q'-1}\xi \cdot f_2(\mu)$ and use \eqref{Eq: v-delta},\eqref{Eq: Limit of v-delta} to obtain
    $$-\Delta(\widetilde{v}-v_\delta) + D_\xi (\widetilde{v} - v_\delta) \cdot V_\delta = (q-1)^{q'-1}\delta^{q'}(\rho - F) + g_\delta$$
    where
    $$V_\delta = \delta\int_0^1D_pH((q-1)^{1-q'}\delta^{1-q'}(sD_\xi v_\delta + (1-s)D_\xi \widetilde{v}),\mu) ds$$
    and
    $$
    \begin{aligned}
        g_\delta &= (q-1)^{q'-1}\delta^{q'}H\left((q-1)^{1-q'}\delta^{1-q'}D_\xi \widetilde{v},\mu\right) - f_1(\mu)(q-1)^{-1}\left|D_\xi \widetilde{v} + \delta^{q'-1}(q-1)^{q'-1}f_2(\mu)\right|^q \\
        &= (q-1)^{q'-1}\delta^{q'}G\left((q-1)^{1-q'}\delta^{1-q'}D_\xi \widetilde{v},\mu\right)
    \end{aligned}
    $$
    by A\ref{A: Bounds for H}. Since $V_\delta$ is locally bounded, for sufficiently small $\delta > 0$, we can apply \cite[Theorem 6.2]{gilbarg1977elliptic} to the set
    $$A \coloneqq \left\{\xi \in \R^n_+ : |\xi - \vec{e}_1| < \frac{1}{2}\right\},$$
    which gives
    $$|D_\xi \widetilde{v}(\vec{e}_1) - D_\xi v_\delta(\vec{e}_1)| + |D_{\xi\xi}^2 \widetilde{v}(\vec{e}_1) - D_{\xi\xi}^2 v_\delta(\vec{e}_1)| \leq C(\|\widetilde{v} - v_\delta\|_\infty + \|g_\delta\|_{C^\beta(A)} + \delta^{q'}).$$
    Note that A\ref{A: Bounds for H} and A\ref{A: Regularity of G} give
    $$\left|G\left((q-1)^{1-q'}\delta^{1-q'}D_\xi \widetilde{v},\mu\right)\right| \leq Cf_3(\mu)(1 + \delta^{(1-q')\widetilde{q}})$$
    and
    $$\left|G\left((q-1)^{1-q'}\delta^{1-q'}D_\xi \widetilde{v}(\xi^1),\mu\right) - G\left((q-1)^{1-q'}\delta^{1-q'}D_\xi \widetilde{v}(\xi^2),\mu\right)\right| \leq Cf_3(\mu)\delta^{(1-2q')\widetilde{\alpha}}|\xi^1 - \xi^2|^{\widetilde{\alpha}}.$$
    
    By \eqref{Eq: Second-Order Expansion of u}, we have
    $$\|v - v_\delta\|_\infty = O(\omega_q(\delta))$$
    and so
    $$|D_\xi \widetilde{v}(\vec{e}_1) - D_\xi v_\delta(\vec{e}_1)| +
    |D_{\xi\xi}^2 \widetilde{v}(\vec{e}_1) - D_{\xi\xi}^2 v_\delta(\vec{e}_1)| = O(\omega_q(\delta)).$$
    Since
    $$|D_\xi \widetilde{v}(\vec{e}_1) - D_\xi v(\vec{e}_1)| +
    |D_{\xi\xi}^2 \widetilde{v}(\vec{e}_1) - D_{\xi\xi}^2 v(\vec{e}_1)| = O(\delta^{q'-1}),$$
    this completes the proof.
\end{proof}

\section{Fokker-Planck Equation}
\label{Sec: FP}

In this section, we recall results from \cite{porretta2023ergodic} on the well-posedness of the Fokker-Planck equation and the regularity of solutions. As in Section \ref{Sec: HJ}, there is no loss of generality in assuming $\sigma = 1$.

\begin{definition} \label{Def: Solution to FPK}
    Given $b \in L^\infty_{loc}(\Omega;\R^n)$, we say $m \in \mathcal{P}(\Omega)$ is a \textit{weak solution} of
    \begin{equation} \label{Eq: FPK}
        \Delta m + \nabla \cdot (mb) = 0, \qquad x \in \Omega
    \end{equation}
    if for all $\varphi \in L^\infty(\Omega)$ such that there is a bounded continuous function $\eta$ for which $-\Delta\varphi + b \cdot D_x\varphi = \eta$ in the sense of distributions, we have
    $$\int_\Omega [-\Delta\varphi + b \cdot D_x\varphi]dm \coloneqq \int_\Omega \eta \ dm = 0.$$
\end{definition}
We remark that in Definition \ref{Def: Solution to FPK}, it is sufficient to take $\eta$ in a set that is dense in the space of bounded continuous functions, e.g.~we can take $\eta \in W^{1,\infty}(\Omega)$ (cf.~the proof of Theorem \ref{Thm: Existence} below).

\begin{theorem} \label{Thm: Well-Posedness of FPK}
    Suppose $b \in C^0(\Omega;\R^n) \cap W^{1,\infty}_{loc}(\Gamma_{\delta_0};\R^n)$ for some $\delta_0 > 0$ and that either
    $$
    \begin{cases}
        \Delta d - b \cdot D_xd \geq \frac{1}{d} - Cd \qquad \text{ for } x \in \Gamma_{\delta_0} \text{ for some } C > 0 \\
        \operatorname{Jac} b \geq -Cd^{\gamma_0-2}I  \qquad \text{ for } x \in \Gamma_{\delta_0} \text{ for some } C > 0, \gamma_0 > 0
    \end{cases}
    $$
    or
    $$
    \begin{cases}
        \Delta d - b \cdot D_xd \geq \frac{\beta_0}{d}(1 + o(1)) \qquad \text{ for } x \in \Gamma_{\delta_0} \text{ for some } \beta_0 > 1 \\
        \operatorname{Jac} b \geq -Cd^{-2}\kappa I  \qquad \text{ for } x \in \Gamma_{\delta_0} > 0 \text{ for some } C > 0
    \end{cases}
    $$
    where $\kappa(x) \to 0$ as $x \to \B$. Then there is a unique weak solution of \eqref{Eq: FPK}, which is absolutely continuous with $L^1$ density.
\end{theorem}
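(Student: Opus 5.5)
The statement is taken from \cite{porretta2023ergodic}, so the plan is to follow their strategy and to check that each step only uses the two displayed invariance conditions on $b$. Write $\mathcal{L}\varphi \coloneqq -\Delta\varphi + b\cdot D_x\varphi$, the dual operator, so that $m$ is a weak solution exactly when $\int_\Omega \mathcal{L}\varphi\, dm = 0$ for all admissible test functions. The key tool is a family of barrier functions built from $d$. For $\varphi = d^s$ with $s\in(0,1)$, on $\Gamma_{\delta_0}$ we have
$$-\mathcal{L}(d^s) = s(s-1)d^{s-2} + s\,d^{s-1}(\Delta d - b\cdot D_xd).$$
Under the first set of hypotheses this gives $-\mathcal{L}(d^s) \geq s^2 d^{s-2} - C$. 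Under the second set it gives $-\mathcal{L}(d^s) \geq s(s-1+\beta_0+o(1))d^{s-2}$, which is again a positive multiple of $d^{s-2}$ near $\B$ since $\beta_0>1$. I will use this quantitative confinement estimate throughout.

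\textbf{Existence.} I will approximate $\Omega$ by $\Omega_\delta$ and solve on each $\Omega_\delta$ the stationary equation $\Delta m_\delta + \nabla\cdot(m_\delta b) = 0$ with the no-flux condition $(D_xm_\delta + m_\delta b)\cdot\n = 0$ on $\partial\Omega_\delta$. This is a classical problem, since $b$ is bounded on $\overline{\Omega_\delta}$, and it has a unique positive probability solution (by Krein--Rutman or the Fredholm alternative). Testing against $d^s$, where the boundary terms have a favourable sign or vanish in the limit, yields
$$\int_{\Gamma_{\delta_0}} d^{s-2}\, dm_\delta \leq C$$
uniformly in $\delta$. This bound gives tightness of $(m_\delta)$ and no mass escaping to $\B$. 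Interior elliptic estimates, valid because $b \in C^0$ and hence locally bounded, give local $W^{1,p}$ bounds on the densities. I can then pass to the limit in the weak formulation: first for test functions $\varphi \in C^2_c(\Omega)$, and then for the broader class in Definition~\ref{Def: Solution to FPK}. The extension uses cutoff functions $\chi_\varepsilon$ equal to $1$ on $\Omega_{2\varepsilon}$; the commutator terms $\varphi\Delta\chi_\varepsilon$, $D\varphi\cdot D\chi_\varepsilon$ and $\varphi\, b\cdot D\chi_\varepsilon$ are of size $O(\varepsilon^{-2})$ on $\Gamma_{2\varepsilon}$. These are controlled by the moment bound, since $\varepsilon^{-2}m(\Gamma_{2\varepsilon}) \leq C\varepsilon^{-s}\int_{\Gamma_{2\varepsilon}} d^{s-2}\, dm \cdot \varepsilon^{s} \to 0$ once I take $s$ small and use that $\int_{\Gamma_{2\varepsilon}} d^{s-2}\,dm \to 0$. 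The limit then has an $L^1$ density and total mass $1$.

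\textbf{Uniqueness.} I will use duality. Given $\eta$, taken in the dense class $W^{1,\infty}(\Omega)$, I will construct a bounded solution $\varphi$ of $\mathcal{L}\varphi = \eta - c_\eta$ for a suitable constant $c_\eta$. The approach is to solve the discounted problem $\lambda\varphi_\lambda + \mathcal{L}\varphi_\lambda = \eta$, which has $\|\lambda\varphi_\lambda\|_\infty \leq \|\eta\|_\infty$ by the maximum principle. Here the invariance condition ensures that no boundary data is needed: the diffusion never reaches $\B$, and the barrier $d^s$ serves as the comparison function. Then I let $\lambda \to 0$ after subtracting $\varphi_\lambda(x_0)$. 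To make this work I need an oscillation bound on $\varphi_\lambda$ that is uniform in $\lambda$, that is, an ergodicity or coupling estimate. This is where the lower bound on $\operatorname{Jac} b$ enters. A Bernstein-type computation on $|D\varphi_\lambda|^2$ gives
$$\lambda|D\varphi|^2 + \mathcal{L}(|D\varphi|^2) + 2|D^2\varphi|^2 + 2\langle (\operatorname{Jac} b)D\varphi, D\varphi\rangle = 2D\eta\cdot D\varphi,$$
and $\operatorname{Jac} b \geq -Cd^{\gamma_0-2}I$ (respectively $-Cd^{-2}\kappa I$) is exactly what allows this term to be absorbed by a weight built from the confining drift. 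The result is a gradient bound $|D\varphi_\lambda| \leq C d^{-\theta}$ with $\theta<1$, hence a uniform oscillation bound.

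With such a $\varphi$ in hand, uniqueness is immediate: any two weak solutions $m_1,m_2$ satisfy $\int\eta\, dm_i = c_\eta$, so $\int \eta\, d(m_1-m_2) = 0$ for a dense set of $\eta$, and therefore $m_1 = m_2$. I expect the main obstacle to be this weighted Bernstein estimate together with the uniform-in-$\lambda$ compactness, which is the genuinely delicate part of \cite{porretta2023ergodic}. My first attempt will be to test with $\psi = \varphi^2$-type quantities plus a multiple of $d^{s}$, choosing $s$ so that the $d^{s-2}$ gain from the computation above dominates the $d^{\gamma_0-2}$ loss coming from $\operatorname{Jac} b$.
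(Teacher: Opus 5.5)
The paper gives no proof of this statement. It is imported from \cite{porretta2023ergodic}, and your duality architecture matches how the paper later uses that source: Propositions 3.5 and 3.9 and estimate (3.33) there give solvability of $-\sigma\Delta\phi+b\cdot D_x\phi=\eta+\lambda$ with $|\lambda|\le\|\eta\|_\infty$ and global $W^{1,r}$ bounds on $\phi$. However, two of your steps fail as written.

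\emph{Existence: the cutoff argument.} Your chain $\varepsilon^{-2}m(\Gamma_{2\varepsilon})\le C\varepsilon^{-s}\int_{\Gamma_{2\varepsilon}}d^{s-2}\,dm\cdot\varepsilon^{s}$ contains a spurious factor $\varepsilon^{s}$. From $d^{s-2}\ge(2\varepsilon)^{s-2}$ on $\Gamma_{2\varepsilon}$ you only get $\varepsilon^{-2}m(\Gamma_{2\varepsilon})\le C\varepsilon^{-s}\int_{\Gamma_{2\varepsilon}}d^{s-2}\,dm$. Making this vanish requires $m(\Gamma_{2\varepsilon})=o(\varepsilon^2)$, which your bounds do not give: they hold only for $s>0$, with constants of order $s^{-2}$. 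This is not a bookkeeping issue. The first set of hypotheses admits the borderline drift $b=-(d^{-1}-\Delta d)D_xd$ near $\B$ (for smooth $\B$). Then $\Delta d-b\cdot D_xd=1/d$ and $\operatorname{Jac}b\ge -Cd^{-1}I$, and the invariant density behaves like $d$; on a flat piece, $m=x_1$ is an exact zero-flux solution. So $\varepsilon^{-2}m(\Gamma_{2\varepsilon})$ stays bounded away from $0$, and the commutator terms cancel only collectively, not one by one. Moreover, those terms contain $D_x\varphi$ and $b$, and neither is $O(\varepsilon^{-1})$: admissible $\varphi$ are merely bounded, and the hypotheses control only $b\cdot D_xd$, and only from one side.

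What does work is to test your no-flux approximations directly: $\int_{\Omega_\delta}\mathcal{L}\varphi\,dm_\delta=-\int_{\partial\Omega_\delta}m_\delta\,\partial_{\n}\varphi\,d\sigma$. Your own $d^s$ computation gives $\int_{\partial\Omega_\delta}m_\delta\,d\sigma\le C\delta^{1-s}/s$, so this vanishes when $|D_x\varphi|\le Cd^{-\theta}$ with $\theta<1-s$. That route still needs two things. First, it needs gradient bounds on the test functions, so existence must also go through the adjoint ergodic problem. Second, it needs a comparison step to pass to arbitrary admissible $\varphi$: a bounded $\psi$ with $\mathcal{L}\psi=c$ forces $c=0$, via the barrier $-\log d$, for which $\mathcal{L}(-\log d)\ge -C$.

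\emph{Uniqueness: the uniform oscillation bound.} Here the logic runs backwards. $b$ is differentiable only in $\Gamma_{\delta_0}$ and merely continuous in $\Omega_{\delta_0}$. So the Bernstein identity for $|D_x\varphi_\lambda|^2$ is available only in the boundary layer, and there it needs data on $\partial\Omega_{\delta_0}$ that can only come from interior $W^{2,p}$ estimates, i.e.\ from $\operatorname{osc}\varphi_\lambda$. Even for smooth $b$, the linear inequality $\mathcal{L}w+2\lambda w\le 2|D_x\eta|\,w^{1/2}+Cd^{\gamma_0-2}w$ has no $\lambda$-independent coercive term away from $\B$. A weight built from $d$ produces an absorbing term like $-Ks^2d^{s-2}$ only near $\B$; in the Hamilton--Jacobi case, by contrast, $H$ supplies coercivity everywhere. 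So the Jacobian condition yields at best gradient bounds in terms of $\operatorname{osc}\varphi_\lambda$, and ergodicity itself is the missing idea.

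One way to supply it is by contradiction. Set $\psi_\lambda=(\varphi_\lambda-\varphi_\lambda(x_0))/\operatorname{osc}\varphi_\lambda$ and use the weighted gradient bound for equicontinuity up to $\B$. This produces a bounded nonconstant solution of $\mathcal{L}\psi=0$. Rule it out with a Liouville theorem based on a Lyapunov function that blows up at $\B$ and has strictly negative generator there, e.g.\ $V=\log|\log d|$, for which $-\mathcal{L}V\le C|\log d|^{-1}-(d\log d)^{-2}$. Note also that $d^s$, which vanishes on $\B$, cannot serve as the comparison function for bounded solutions without boundary data. Under the borderline hypothesis, only logarithmic barriers of this kind are available.
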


\begin{theorem} \label{Thm: Regularity of m}
    Assume the hypotheses of Theorem \ref{Thm: Well-Posedness of FPK} hold. Now assume there exist $\gamma > 1$ and $\delta_0, \theta > 0$ such that for all $x \in \Gamma_{\delta_0}$,
    $$b(x) = \frac{\gamma}{d(x)}[\n(x) + O(d(x)^\theta)] \qquad \text{and} \qquad \nabla \cdot b = \frac{\gamma}{d(x)^2}[1 + O(d(x)^\theta)].$$
    Let $m$ be the weak solution of \eqref{Eq: FPK}. Then $m \in C^{1+\beta}(\overline{\Omega})$ for some $\beta > 0$ and there exist $A_1,A_2 > 0$ such that
    $$A_1d^\gamma \leq m \leq A_2d^\gamma.$$
\end{theorem}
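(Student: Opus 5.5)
The plan is to factor out the expected boundary profile and study the quotient $w \coloneqq m\,d^{-\gamma}$. The guiding observation is that for the model drift $b_0 \coloneqq \gamma\n/d = -\gamma D_x(\ln d)$ (recall $D_xd=-\n$ in $\Gamma_{\delta_0}$), the function $d^\gamma$ is an exact stationary solution. Indeed, $D_x(d^\gamma)+d^\gamma b_0 = \gamma d^{\gamma-1}D_xd + \gamma d^{\gamma-1}\n = 0$. Writing $m=d^\gamma w$, the flux becomes
$$D_xm + mb = d^\gamma\big(D_xw + w\,c\big), \qquad c \coloneqq b - \frac{\gamma\n}{d} = O(d^{\theta-1}).$$
Expanding $\nabla\cdot(d^\gamma(D_xw+wc))=0$ and dividing by $d^\gamma$ gives a non-divergence equation in $\Gamma_{\delta_0}$:
$$\Delta w + B\cdot D_xw + c_0\,w = 0, \qquad B = \frac{\gamma D_xd}{d} + c = -\frac{\gamma\n}{d}+O(d^{\theta-1}), \qquad c_0 = \nabla\cdot c + \frac{\gamma D_xd\cdot c}{d}.$$
The hypothesis on $\nabla\cdot b$, together with $\nabla\cdot(\gamma\n/d) = \gamma/d^2 + O(d^{-1})$ and the expansion of $b$, yields $c_0 = O(d^{\theta-2})$. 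So $w$ solves an equation whose drift pushes inward with strength $\gamma/d$, $\gamma>1$, and whose zeroth-order term is a lower-order perturbation of $d^{-2}$. Interior regularity of $m$ away from $\B$ is standard, since $b$ is locally bounded and $m$ is the $L^1$ solution given by Theorem \ref{Thm: Well-Posedness of FPK}. Hence $w$ is bounded above and below by positive constants on $\Omega_\delta$ for each fixed $\delta$; positivity follows from the strong maximum principle and Harnack, using $\int m=1$.

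The key step, and the one I expect to be the main obstacle, is the two-sided bound $A_1\le w\le A_2$ in $\Gamma_\delta$. Here the zeroth-order coefficient $c_0$ has no sign and is unbounded, so the maximum principle cannot be applied to $w$ directly. I would first try multiplicative barriers $\phi_\pm = 1 \pm K d^{\kappa}$ with $0<\kappa<\theta$ small. Using $B\cdot D_xd = \gamma/d + O(d^{\theta-1})$, one computes
$$\Delta(d^\kappa) + B\cdot D_x(d^\kappa) = \kappa(\kappa-1+\gamma)d^{\kappa-2} + O(d^{\kappa-1}+d^{\kappa+\theta-2}).$$
Since $\gamma>1$ this leading term is positive, and it dominates $|c_0|\phi_\pm = O(d^{\theta-2})$ once $\kappa<\theta$ and $\delta$ is small. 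Thus $\phi_+$ is a supersolution and $\phi_-$ a subsolution of the $w$-equation near $\B$, and the quotients $w/\phi_\pm$ satisfy equations with a zeroth-order term of the favourable sign. I would then run the maximum principle on the strips $\Gamma_\delta\setminus\Gamma_\eta$ and let $\eta\to0$. To make this legitimate I need that $w/\phi_\pm$ cannot achieve its extremum at the inner boundary $\{d=\eta\}$ in the limit. For this I would use the non-attainability of $\B$ for the drift $-\gamma\n/d$ (the condition $\gamma>1$), in the form of an auxiliary barrier $\varepsilon d^{-s}$ with $0<s<\gamma-1$. This barrier is a supersolution and blows up at $\B$, so the estimate closes after sending $\varepsilon\to0$. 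It also requires an a priori mild growth bound on $w$ near $\B$. I would get that growth bound from the $L^1$ control and the local estimates of the next step, in the spirit of Theorem \ref{Thm: Well-Posedness of FPK}. Matching with the values on $\{d=\delta\}$ then gives the bounds $A_1d^\gamma\le m\le A_2 d^\gamma$.

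Finally, for $C^{1+\beta}(\overline\Omega)$ regularity I would use a scaling argument on balls $B(x_0,d(x_0)/2)$. After rescaling by $d(x_0)$, the rescaled drift and zeroth-order coefficient of the $w$-equation are bounded uniformly in $x_0$. Schauder or $W^{2,p}$ estimates then give $|D_xw|\le C d^{-1}$ and a corresponding Hölder control on $D_xw$ at scale $d$, with $w$ bounded by the previous step. Combining this with the expansion of $c$, I expect the sharper decay $|D_xw|\le C d^{\theta-1}$. The point is that $w$ is close to a solution of the model equation, for which the derivatives of $w$ in the $\n$ direction are damped by the inward drift. Then $D_xm = \gamma d^{\gamma-1}wD_xd + d^\gamma D_xw = O(d^{\gamma-1})$ is continuous up to $\B$, with vanishing boundary values, and is Hölder since $\gamma>1$. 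This gives $m\in C^{1+\beta}(\overline\Omega)$ for some $\beta>0$ depending on $\gamma-1$ and $\theta$.
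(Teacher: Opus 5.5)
The paper does not prove this statement; it quotes it from \cite{porretta2023ergodic}, so your argument has to stand on its own. Your reduction to $w=m\,d^{-\gamma}$, the formulas for $B$ and $c_0=O(d^{\theta-2})$, and the power computations are correct. However, there is a genuine gap in the upper bound $m\le A_2d^\gamma$, which is the real content of the theorem.

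To close the maximum principle on $\Gamma_\delta\setminus\Gamma_\eta$ with the barrier $\varepsilon d^{-s}$, $0<s<\gamma-1$, you need $\max_{\{d=\eta\}}w=o(\eta^{-s})$ along some sequence $\eta\to0$, i.e.\ $m=o(d^{1+\epsilon})$ on those level sets. This cannot be extracted from $\int_\Omega m=1$ and local estimates. The model operator $\partial_{rr}+\frac{\gamma}{r}\partial_r$ has two positive solutions, $1$ and $r^{1-\gamma}$. The second corresponds to $m\approx d$, which for the drift $\gamma\n/d$ carries the nonzero flux $(D_xm+mb)\cdot\n=\gamma-1$ through $\B$. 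Such a profile is positive, integrable, and compatible with every interior and rescaled estimate (these give at best $m\le Cd^{-n}$). Ruling it out is exactly what the upper bound asserts. The information that does so is the no-flux condition encoded in Definition~\ref{Def: Solution to FPK} (bounded test functions solving the dual equation, which need not vanish on $\B$), together with the uniqueness in Theorem~\ref{Thm: Well-Posedness of FPK}. Your plan uses neither, so the ``mild growth bound'' you invoke is essentially equivalent to the conclusion and is not available from the ingredients you list.

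\textbf{A repair using your own barrier.} Solve in $\Omega_\eta$ with the no-flux condition $(D_xm_\eta+m_\eta b)\cdot\n=0$ on $\{d=\eta\}$ and $\int m_\eta=1$. Then $\partial_{\n}w_\eta=-w_\eta\,c\cdot\n$ there, and $v=w_\eta/(1-Kd^\kappa)$ satisfies $\partial_{\n}v=-v\bigl(c\cdot\n+K\kappa\eta^{\kappa-1}/(1-K\eta^\kappa)\bigr)<0$ for $\kappa<\theta$ and $\eta$ small. Hopf's lemma therefore excludes a positive maximum of $v$ on $\{d=\eta\}$. Interior estimates bound $v$ on $\{d=\delta\}$ uniformly, so $m_\eta\le A_2d^\gamma$ uniformly in $\eta$. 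In the limit, the boundary term $\int_{\{d=\eta\}}m_\eta\,\partial_{\n}\varphi$ is $O(\eta^{\gamma-1})$ (use $|D_x\varphi|\le C/d$ from rescaled estimates). Hence the limit is a weak solution in the sense of Definition~\ref{Def: Solution to FPK}, and it equals $m$ by uniqueness.

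\textbf{Smaller points.} Your labels are reversed: $(\Delta+B\cdot D_x+c_0)(1+Kd^\kappa)>0$ near $\B$, so $1+Kd^\kappa$ is a subsolution and $1-Kd^\kappa$ the supersolution. The upper bound must therefore use $w/(1-Kd^\kappa)$ and the lower bound $w/(1+Kd^\kappa)$. With that correction, the lower bound needs no growth information at all. Since $w\ge0$, comparing $w/(1+Kd^\kappa)$ with $\min_{\{d=\delta\}}w/(1+K\delta^\kappa)-\varepsilon d^{-s}$, which tends to $-\infty$ at $\B$, closes it directly. Finally, the $C^{1+\beta}$ step is fine once the two-sided bound holds. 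Rescaled $W^{2,p}$ estimates give $|D_xw|\le C/d$, hence $|D_xm|\le Cd^{\gamma-1}$, so the sharper decay $|D_xw|\le Cd^{\theta-1}$ you anticipate is not needed.
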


\begin{remark}
    We observe that if we assume A\ref{A: Hamiltonian}-\ref{A: Regularity of G} and either A\ref{A: Asymptotics of DpH} (for $1 < q < 2)$ or A\ref{A: q=2} (for $q=2$), and if $u$ is a solution to the Hamilton-Jacobi equation, then the conditions of Theorems \ref{Thm: Well-Posedness of FPK} and \ref{Thm: Regularity of m} are satisfied for $b = D_pH(D_xu,\mu)$. In the case $q < 2$, this follows by combining Lemma \ref{Lem: Asymptotic Behavior of Derivatives} with A\ref{A: Asymptotics of DpH}. For $q = 2$, we apply gradient blow-up to find $\delta_0 > 0$ such that $|D_xu| > |\varphi|$ in $\Gamma_{\delta_0}$, which gives $b \in W^{1,\infty}_{loc}(\Gamma_{\delta_0})$. The other conditions follow from the fact that
    $$\operatorname{Jac}(D_pH(D_xu,\mu)) = 2\psi(\mu)D_{xx}^2u,$$
    which implies
    $$\nabla \cdot (D_pH(D_xu,\mu)) = 2\psi(\mu)\Delta u = 2\psi(\mu)^2|D_xu + \varphi(\mu)|^2 + 2\psi(\mu)(V(\mu) + \rho) = \frac{2}{d^2}(1 + O(d))$$
    and
    $$\langle(\operatorname{Jac}(D_pH(v,\mu)))\xi,\xi\rangle = 2\psi(\mu)\langle(D_{xx}^2u)\xi,\xi\rangle = \frac{q'}{d^2}\left[|\langle\xi,\n(x)\rangle|^2 + o(1)\right] \geq - Cd(x)^2\kappa(x).$$
    where $\kappa(x) \to 0$ as $d(x) \to 0$.
\end{remark}

\section{A Priori Estimates}
\label{Sec: A Priori Estimates}

In this section, we obtain a priori estimates for solutions to \eqref{Eq: Ergodic MFG}. Our approach is inspired by the one found in \cite{kobeissi2022mean}, in which a priori estimates for solutions are obtained using a comparison with a fixed control. This allows us to obtain a priori estimates for solutions without requiring any smallness conditions. However, since the control $a_0 = 0$ is not admissible, we will need to use another control for comparison. For this purpose, we will choose the control $b_0(\cdot)$ given by
\begin{equation} \label{Eq: b0}
    \begin{cases}
        b_0(\cdot) \coloneqq -q|D_xv(\cdot)|^{q-2}D_xv(\cdot) \\
        -\sigma\Delta v + |D_xv|^q + \widetilde{\rho} = 0
    \end{cases}
\end{equation}

We begin with the following adaptation of \cite[Theorem VII.3]{lasry1989nonlinear}.

\begin{lemma} \label{Lem: Stochastic Interpretation}
    Assume A\ref{A: F}-\ref{A: Bound for L} hold and let $(u,\rho,m,\mu)$ be the unique solution of \eqref{Eq: Ergodic MFG}.
    Consider the controlled dynamics
    $$dX_t = a(X_t)dt + \sqrt{2\sigma}dB_t, \qquad 0 \leq t < \tau_x, \qquad X_0 = x \in \Omega,$$
    where $\tau_x \coloneqq \inf\{t \geq 0 : X_t \notin \Omega\}$, and $\mathcal{A}$ is the set of admissible controls, i.e.~the set of all measurable $a(\cdot)$ such that $P(\tau_x < \infty) = 0$ for all $x \in \Omega$.
    For each $a \in \mathcal{A}$ let $\theta_a$ be a stopping time bounded by some constant independent of $a$.
    Then we have
    \begin{equation} \label{Eq: Stochastic Interpretation of rho}
        \rho = \lim_{T \to \infty} \frac{1}{T} \inf_{a \in \mathcal{A}} E \int_0^T \left(L(a(X_t),\mu) + F(\mu,X_t)\right) dt,
    \end{equation}
    \begin{equation} \label{Eq: Stochastic Interpretation of u}
        u(x) = \inf_{a \in \mathcal{A}} E \left[\int_0^{\theta_a} \left(L(a(X_t),\mu) + F(\mu,X_t)\right) dt + u(X_{\theta_a}) - \theta_a\rho\right].
    \end{equation}
     Furthermore, $b_0(\cdot), -D_pH(D_xu(\cdot),\mu) \in \mathcal{A}$, where $b_0(\cdot)$ is given by \eqref{Eq: b0}, and the infimums in \eqref{Eq: Stochastic Interpretation of rho}, \eqref{Eq: Stochastic Interpretation of u} are attained by $a = -D_pH(D_xu,\mu)$.
\end{lemma}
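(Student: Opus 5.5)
The proof follows \cite[Theorem VII.3]{lasry1989nonlinear}. It has two parts: a verification argument based on It\^o's formula for the identities \eqref{Eq: Stochastic Interpretation of rho}--\eqref{Eq: Stochastic Interpretation of u}, and a Lyapunov-function argument for admissibility of the two feedbacks $b_0$ and $a^*:=-D_pH(D_xu,\mu)$.

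\textbf{Admissibility.} For $a^*$, I would use $u$ itself as the Lyapunov function. From the Hamilton-Jacobi equation and convexity,
$$\sigma\Delta u + a^*\cdot D_xu = H(D_xu,\mu)-D_xu\cdot D_pH(D_xu,\mu)+\rho-F \le \rho + C_0 + C_0(1+\Lambda_{q'}(\mu)^{q'}),$$
where the inequality is \eqref{Eq: Convexity for H-theta}. So the generator applied to $u$ is bounded above by a constant $K$, while $u\to\infty$ at $\B$ and $u\ge -C$. Set $\tau_R=\inf\{t: u(X_t)\ge R\}$. It\^o's formula, valid since $u\in W^{2,r}_{loc}$ with $r>n$ by Krylov's extension, gives
$$R\,P(\tau_R\le T)-C\le E\,u(X_{T\wedge\tau_R})\le u(x)+KT.$$
Letting $R\to\infty$ gives $P(\tau_x\le T)=0$ for every $T$, so $a^*\in\mathcal A$.

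The same argument applies to $b_0$ with the function $v$ from \eqref{Eq: b0}. Here $v$ is the solution given by Theorem \ref{Thm: HJB has a solution} with $H=|p|^q$, $F=0$, $\mu$ irrelevant. In that case $\sigma\Delta v+b_0\cdot D_xv=-(q-1)|D_xv|^q+\widetilde\rho\le\widetilde\rho$.

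\textbf{Verification.} Fix an admissible $a$ and a stopping time $\theta\le T_0$. On $\{t<\tau_R\}$, apply It\^o's formula to $u(X_t)$, where now $\tau_R$ denotes the exit time from $\Omega_{1/R}$. By the definition of $L$ as a Legendre transform,
$$a\cdot D_xu \ge -H(D_xu,\mu)-L(a,\mu),$$
so the generator satisfies $\sigma\Delta u+a\cdot D_xu\ge \rho-F-L(a,\mu)$, with equality exactly when $a=a^*$ (strict convexity, A\ref{A: Lagrangian}). This yields
$$u(x)\le E\Big[\int_0^{\theta\wedge\tau_R}(L(a(X_t),\mu)+F(\mu,X_t))\,dt+u(X_{\theta\wedge\tau_R})-(\theta\wedge\tau_R)\rho\Big].$$
The next step is to let $R\to\infty$. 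Admissibility gives $\tau_R\to\infty$ a.s. The running cost is bounded below by A\ref{A: Lower bound for L}, so Fatou's lemma (or monotone convergence) handles it. Since $u\ge -C$, the terminal term is controlled from below. Controls with infinite expected cost satisfy the inequality trivially. Hence $u(x)$ is at most the right-hand side of \eqref{Eq: Stochastic Interpretation of u}.

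For $a=a^*$ the inequality is an equality up to $\theta\wedge\tau_R$. Passing to the limit then requires uniform integrability of $u(X_{\theta\wedge\tau_R})$, and this is the main obstacle. I would handle it with the moment-type bound
$$E\sup_{t\le T_0}u(X_t)^{1+\epsilon}<\infty.$$
This comes from applying It\^o's formula to a convex power $\Phi(u)$ and using the asymptotics of Lemma \ref{Lem: Asymptotic Behavior of Derivatives}: near the boundary $|D_xu|^2\sim d^{-2q'}$ is controlled by $D_xu\cdot D_pH\sim d^{-q'}\cdot d^{-1}\cdot\ldots$. Put plainly, the drift $a^*$ pushes inward at rate $q'/d$, which dominates the It\^o correction. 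Alternatively, Lemma \ref{Lem: Asymptotic Behavior of Derivatives} together with A\ref{A: Asymptotics of DpH} gives $\sigma\Delta\Phi(u)+a^*\cdot D\Phi(u)\le C$ for suitable $\Phi$, for instance $\Phi(s)=s^{1+\epsilon}$ with small $\epsilon$. This gives \eqref{Eq: Stochastic Interpretation of u} with the infimum attained by $a^*$.

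\textbf{Ergodic constant.} Take $\theta=T$ in the inequality above. For any admissible $a$, the inequality gives
$$\frac1T E\int_0^T(L+F)\,dt\ge \rho+\frac{u(x)-E\,u(X_T)}{T}.$$
When $E\,u(X_T)$ is unbounded, it is controlled by the cost itself through a similar Lyapunov estimate; otherwise one may restrict to controls with finite averaged cost. For $a^*$ there is equality, and $E\,u(X_T)$ stays bounded. This holds by the same Lyapunov bound, or by ergodicity towards $m$ together with $\int u\,dm<\infty$, which follows from Theorem \ref{Thm: Regularity of m}. Dividing by $T$ and letting $T\to\infty$ yields \eqref{Eq: Stochastic Interpretation of rho}, with the infimum attained by $a^*$.
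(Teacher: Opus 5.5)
Your admissibility step is the same as the paper's and is fine. The problem is the verification for an \emph{arbitrary} admissible control, where the inequalities point the wrong way. After It\^o up to $\theta\wedge\tau_R$ you have $u(x)\le E[\int_0^{\theta\wedge\tau_R}(L+F-\rho)\,dt]+E[u(X_{\theta\wedge\tau_R})]$. The running cost passes to the limit by monotone convergence. The terminal term, however, needs an \emph{upper} bound, $\liminf_R E[u(X_{\theta\wedge\tau_R})]\le E[u(X_\theta)]$, which amounts to $E[u(X_{\tau_R})\mathbf{1}_{\{\tau_R<\theta\}}]\to0$ (at least along a subsequence). The bound $u\ge -C$ gives, via Fatou, only $\liminf_R E[u(X_{\theta\wedge\tau_R})]\ge E[u(X_\theta)]$, which is the useless direction here. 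Admissibility of $a$ gives $P(\tau_R<\theta)\to0$ with no rate, while $u(X_{\tau_R})\ge\min_{\partial\Omega_{1/R}}u\to\infty$. Your moment bound on $u(X_t)^{1+\epsilon}$ does not help: it uses the specific boundary asymptotics of $a^*$, which a general admissible $a$ need not have. So the inequality $u(x)\le\inf_a(\cdots)$ in \eqref{Eq: Stochastic Interpretation of u} is not proved. The bound $\rho\le\liminf_T\inf_a\frac1T E\int_0^T(L+F)\,dt$ inherits the same defect, on top of the unsupported claim that $E\,u(X_T)$ ``is controlled by the cost itself''.

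Conversely, the uniform integrability you call the main obstacle is not needed. For $a^*$ the missing inequality is $u(x)\ge E[\int_0^\theta(L+F-\rho)\,dt+u(X_\theta)]$, and this is exactly what Fatou with $u\ge -C$ yields. Taking $\theta=T$ gives $\frac1T E\int_0^T(L(a^*)+F)\,dt\le\rho+\frac{u(x)+C}{T}$, hence $\rho\ge\limsup_T(\cdots)$; this is how the paper argues.

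For the reverse direction the paper avoids localization entirely. It uses the solutions $(u^\delta,\rho^\delta)$ of the ergodic problem on the enlarged domains $\Omega^\delta\supset\overline\Omega$, which are bounded on $\overline\Omega$, so It\^o's formula applies up to $\theta$ for every admissible $a$. The Legendre inequality $-a\cdot p-H(p,\mu)\le L(a,\mu)$ then gives $u^\delta(x)\le E[\int_0^\theta(L+F)\,dt+u^\delta(X_\theta)-\theta\rho^\delta]$ and $\rho^\delta\le\frac1T\inf_aE\int_0^T(L+F)\,dt+\frac2T\sup_\Omega|u^\delta|$. One then lets $T\to\infty$ and $\delta\to0$, using $u^\delta\to u$ and $\rho^\delta\to\rho$. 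To keep your localization route instead, you would need a genuinely new estimate showing that finite cost forces $E[u(X_{\tau_R})\mathbf{1}_{\{\tau_R<\theta\}}]\to0$ for a general admissible $a$.
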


\begin{proof}
    First, we show that $a = -D_pH(D_xu,\mu)$ is an admissible control. To this end, define $X_t$ by
    $$dX_t = a(X_t)dt + \sqrt{2\sigma}dB_t \qquad X_0 = x.$$
    Then for $\delta > 0$, It\^{o}'s formula gives
    \begin{equation} \label{Eq: Ito representation of u}
        \begin{aligned}
            E[u(X_{\theta_a \land \tau_x^\delta})] &= u(x) - E\left[\int_0^{\theta_a \land \tau_x^\delta} \left(L(a(X_t),\mu) + F(\mu,X_t)\right) dt - (\theta_a \land \tau_x^\delta)\rho\right] \\
            &\leq u(x) + C\theta_a
        \end{aligned}
    \end{equation}
    where $\tau_x^\delta \coloneqq \inf\{t \geq 0 : X_t \notin \Omega_\delta\}$. Since $u$ is bounded below, this implies
    $$\left(\inf_{\B_\delta} u\right)P\left(\tau_x^\delta \leq \theta_a\right) \leq u(x) + C(1 + \theta_a).$$
    By \eqref{Eq: Boundary Behavior 3}, this implies that $a \in \mathcal{A}$. By a similar argument, we deduce that $b_0(\cdot) \in \mathcal{A}$.

    Letting $\delta \to 0$ in \eqref{Eq: Ito representation of u} gives
    $$u(x) = E\left[\int_0^{\theta_a} \left(L(a(X_t),\mu) + F(\mu,X_t)\right) dt\right] - \theta_a\rho + \lim_{\delta \to 0} E\left[u(X_{\theta_a \land \tau_x^\delta})\right]$$
    and
    $$\lim_{\delta \to 0} E\left[u(X_{\theta_a \land \tau_x^\delta})\right] \geq \lim_{\delta \to 0} E\left[\left(u(X_{\theta_a}) + C\right)\chi_{\theta_a \leq \tau_x^\delta}\right] - C$$
    where $C \geq -\underset{\Omega}{\inf}u$, and the last expression increases to $E\left[u(X_{\theta_a})\right]$. Thus, we have
    $$u(x) \geq E\left[\int_0^{\theta_a} \left(L(a(X_t),\mu) + F(\mu,X_t)\right) dt + u(X_{\theta_a})\right] - \theta_a\rho,$$
    and letting $\theta_a = T$ and then $T \to \infty$, we have
    $$\rho \geq \limsup_{T \to \infty} \frac{1}{T}E\left[\int_0^T \left(L(a(X_t),\mu) + F(\mu,X_t)\right) dt\right].$$
    
    What remains is to show the complementary inequalities. To this end, define $(u^\delta,\rho^\delta)$ to be the solution of \eqref{Eq: Ergodic HJB} with $u^\delta(x_0) = 0$ for $\mu = (I,-D_pH(D_xu,\mu)) \# m$, with $\Omega$ replaced by $\Omega^\delta$. As in the proof of Theorem \ref{Thm: HJB has a solution}, using the a priori bounds on $\rho^\delta$ and local $W^{2,\infty}$ estimates on $u^\delta$, we conclude that $u^\delta \to u$ as $\delta \to 0$ uniformly on compact subsets of $\Omega$, and that $\rho^\delta \to \rho$.
    By It\^{o}'s formula, we get that for every $x \in \Omega$ and $\alpha \in \mathcal{A}$,
    $$
    \begin{aligned}
        u^\delta(x) &= E\left[\int_0^{\theta_\alpha} (-\alpha(X_t) \cdot D_xu^\delta(X_t) - H(D_xu^\delta(X_t),\mu) + F(\mu,X_t)) dt + u^\delta(X_{\theta_\alpha}) - \theta_\alpha\rho^\delta\right] \\
        &\leq E\left[\int_0^{\theta_\alpha} \left(L(\alpha(X_t),\mu) + F(\mu,X_t)\right) dt + u^\delta(X_{\theta_\alpha}) - \theta_\alpha\rho^\delta\right].
    \end{aligned}
    $$
    As $\delta \to 0$, since $u^\delta \to u$ and $\rho^\delta \to \rho$, we deduce that
    $$u(x) \leq \inf_{a \in \mathcal{A}} E\left[\int_0^{\theta_a} \left(L(a(X_t),\mu) + F(\mu,X_t)\right) dt + u(X_{\theta_a}) - \theta_a\rho\right].$$
    As before, taking $\theta_a = T$, we get
    $$\rho^\delta \leq \inf_{a \in \mathcal{A}} \frac{1}{T} E\left[\int_0^T \left(L(a(X_t),\mu) + F(\mu,X_t)\right) dt\right] + \frac{2}{T}\sup_\Omega |u^\delta|$$
    and so
    $$\rho^\delta \leq \liminf_{T \to \infty} \inf_{a \in \mathcal{A}} \frac{1}{T} E\left[\int_0^T \left(L(a(X_t),\mu) + F(\mu,X_t)\right) dt\right].$$
    Letting $\delta \to 0$ completes the proof that $a = -D_pH(D_xu,\mu)$ is a minimizer for \eqref{Eq: Stochastic Interpretation of rho}, \eqref{Eq: Stochastic Interpretation of u}.
\end{proof}

To prove our a priori estimate, we use the following adaptation of \cite[Lemma 5.2]{porretta2023ergodic}, which essentially allows us to justify integrating by parts. The proof is nearly identical and is therefore omitted.

\begin{lemma} \label{Lem: u as a Test Function}
    Let $m$ be a weak solution to
    $$\sigma\Delta m + \nabla \cdot (mb) = 0$$
    for some $b \in C^0(\Omega;\R^n) \cap W^{1,\infty}_{loc}(\Gamma_{\delta_0};\R^n)$ satisfying 
    $$b(x) = \frac{\sigma q'}{d(x)}[\nu(x) + O(d(x)^\theta)]$$
    for some $\delta_0,\theta > 0$. If $(u,\rho)$ is a solution to 
    $$-\sigma\Delta u + H(D_xu,\mu) + \rho = F(\mu,x)$$
    for some $\mu \in \mathcal{P}_{q'}(\overline{\Omega} \times \R^n)$, then
    $$\int_\Omega (-\sigma\Delta u + b \cdot D_xu)dm = 0.$$
\end{lemma}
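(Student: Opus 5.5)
The plan is to test the weak formulation of Definition \ref{Def: Solution to FPK} against truncations of $u$ and then pass to the limit. The boundary layer terms should vanish because $m$ decays like $d^{q'}$ while $u$ and $D_xu$ blow up only like $d^{2-q'}$ (or $|\ln d|$) and $d^{1-q'}$.

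\textbf{Step 1 (properties of $m$).} By Theorem \ref{Thm: Regularity of m} with $\gamma = q'$ (after normalizing $\sigma$), we have $m \in C^{1+\beta}(\overline{\Omega})$ and $A_1 d^{q'} \leq m \leq A_2 d^{q'}$. From the equation, $-\sigma\Delta u + b\cdot D_xu = F - \rho - H(D_xu,\mu) + b\cdot D_xu$. By Lemma \ref{Lem: Asymptotic Behavior of Derivatives} and Theorem \ref{Thm: Gradient Estimate (Given mu)}, this quantity is $O(d^{-q'})$ near $\B$. Hence $|-\sigma\Delta u + b\cdot D_xu|\,m$ is bounded and the integral in the statement is well defined.

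\textbf{Step 2 (truncated test functions).} For $k$ large, set $\varphi_k = T_k(u)$, where $T_k$ is a smooth concave truncation with $T_k(s) = s$ for $s \leq k$, $T_k' \in [0,1]$, $T_k'' \leq 0$, and $T_k$ constant for $s \geq k+1$. Since $u \to \infty$ at $\B$, $\varphi_k$ is bounded and constant near $\B$. Moreover,
\[
-\sigma\Delta\varphi_k + b\cdot D_x\varphi_k = T_k'(u)\,(-\sigma\Delta u + b\cdot D_xu) - \sigma T_k''(u)|D_xu|^2 =: \eta_k ,
\]
which is compactly supported in $\Omega$. It is continuous after mollifying, or directly, using $u \in W^{2,r}_{loc}$ and the density remark following Definition \ref{Def: Solution to FPK}. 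The definition therefore gives $\int_\Omega \eta_k\,dm = 0$.

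\textbf{Step 3 (passing to the limit).} As $k \to \infty$, the first term converges to $\int(-\sigma\Delta u + b\cdot D_xu)\,dm$ by dominated convergence, using Step 1. The second term is supported on $\{k \leq u \leq k+1\}$, a layer at distance $\sim k^{-1/(q'-2)}$ from $\B$ (logarithmic for $q=2$) whose width is comparable to $|D_xu|^{-1}$. I must show that $\int T_k''(u)|D_xu|^2 m\,dx \to 0$.

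To do so, I would choose $T_k$ with $|T_k''|$ of order $1$ on a window of $u$-values of length $1$. Then the integral is bounded by
\[
C\int_{\{k\leq u\leq k+1\}} d^{2-2q'}\,d^{q'}\,dx .
\]
Since the layer's $d$-width is about $d^{q'-1}$, this is roughly $d^{2-q'}\cdot d^{q'-1} = d \to 0$. Alternatively, one can use the coarea formula with $|D_xu| \sim d^{1-q'}$.

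This layer estimate is the main obstacle. It relies on the precise first-order expansion of $D_xu$ and $u$ from Lemma \ref{Lem: Asymptotic Behavior of Derivatives} and on the lower and upper bounds on $m$. If a direct truncation in $u$ is awkward, I would instead truncate in $d$, using $\varphi_\varepsilon = u\,\chi(d/\varepsilon) + c$ with a cutoff $\chi$. In that case the extra terms $u\,\Delta\chi_\varepsilon$, $D_xu\cdot D_x\chi_\varepsilon$ and $u\, b\cdot D_x\chi_\varepsilon$ are each bounded by $d^{1-2q'}$ on $\Gamma_{2\varepsilon}\setminus\Gamma_\varepsilon$. Weighted by $m \lesssim d^{q'}$, these integrate to $O(\varepsilon^{2-q'})$, and to $O(\varepsilon|\ln\varepsilon|)$ type bounds when $q=2$.

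In the borderline case the smallness of $2-q'$ requires care. There I would exploit the cancellation built into the leading-order terms: $\sigma\Delta d - b\cdot D_xd \approx \sigma/d - \sigma q'/d$ multiplies the leading part of $u$, and the corresponding boundary flux is $m\,\partial_n u - u\,\partial_n m - u\, b\cdot n\, m$, whose leading terms cancel as in \cite[Lemma 5.2]{porretta2023ergodic}.
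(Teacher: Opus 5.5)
Your main argument is correct. The paper gives no proof of its own; it only points to \cite[Lemma 5.2]{porretta2023ergodic}. So your Steps 1--3 serve as a self-contained replacement. $T_k(u)$ is bounded and constant near $\B$. Its $\eta_k$ is continuous with compact support, and no mollification is needed because $\Delta u$ is continuous by the equation. The correction term $\sigma\int T_k''(u)|D_xu|^2\,dm$ is $O(d_k)$, where $d_k$ is the distance of the layer $\{k\le u\le k+1\}$ from $\B$.

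Two points deserve care. First, Step 1 invokes Theorem~\ref{Thm: Regularity of m}, whose hypotheses are not among the lemma's assumptions: the asymptotics of $\nabla\cdot b$ and the invariance conditions of Theorem~\ref{Thm: Well-Posedness of FPK}. They do hold where the lemma is used, namely $b=D_pH(D_xu_i,\mu_i)$ in the uniqueness proof, by A\ref{A: Asymptotics of DpH} or A\ref{A: q=2} and the remark in Section~\ref{Sec: FP}. You can also avoid them entirely, as follows.
\begin{itemize}
\item Set $f=-\sigma\Delta u+b\cdot D_xu$ and $K=\sigma^{-1}f_1(\mu)(q-1)$. The expansions give $f=\sigma K^{1-q'}d^{-q'}(1+o(1))>0$ near $\B$.
\item Since $T_k''\le 0$, the identity $\int\eta_k\,dm=0$ gives $\int T_k'(u)f\,dm\le 0$. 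Monotone convergence then yields $f\in L^1(m)$.
\item Use instead the wide truncation $T_k(s)=kT(s/k)$, with $T$ concave, $T(s)=s$ for $s\le 1$ and $T$ constant for $s\ge 2$. The bound $|D_xu|^2\le C\,uf$ near $\B$ then controls the correction term by $C\int_{\{u\ge k\}}f\,dm\to 0$. No upper bound on $m$ is needed.
\end{itemize}

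Second, the width of the layer must come from the gradient expansion of Lemma~\ref{Lem: Asymptotic Behavior of Derivatives}. Use monotonicity of $u$ along normal rays with $|\partial_\nu u|\ge c\,d^{1-q'}$, or coarea. The expansion of $u$ alone does not suffice: for $q\le 3/2$ its remainder is unbounded, so it is not small compared with your unit window and cannot localize $\{k\le u\le k+1\}$ to width $d^{q'-1}$.

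Your fallback with a cutoff in $d$ has an exponent error. On $\{\varepsilon\le d\le 2\varepsilon\}$ the terms $u\Delta\chi_\varepsilon$, $D_xu\cdot D_x\chi_\varepsilon$ and $u\,b\cdot D_x\chi_\varepsilon$ are each $O(d^{-q'})$, not $O(d^{1-2q'})$. Weighted by $m\lesssim d^{q'}$, they contribute $O(\varepsilon)$, or $O(\varepsilon|\ln\varepsilon|)$ when $q=2$. The rate $O(\varepsilon^{2-q'})$ you wrote would not tend to $0$, since $q'\ge 2$. With the correct exponents there is no borderline case and no cancellation is needed. In the duality formulation all derivatives fall on $u\chi_\varepsilon$, so no information on $D_xm$ enters. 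This route is in fact simpler than truncating in $u$: it uses only size bounds on $u$, $D_xu$, $b$ and $m$, and no level-set geometry.
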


\begin{remark}
    Note that by Lemmas \ref{Lem: Stochastic Interpretation} and \ref{Lem: u as a Test Function}, if A\ref{A: Hamiltonian}-\ref{A: Regularity of G} and either A\ref{A: Asymptotics of DpH} or A\ref{A: q=2} hold, then we get
    $$
    \begin{aligned}
        \int_{\Omega \times \R^n} (L(\alpha,\mu) + F(\mu,x)) d\mu(x,\alpha) &= \int_{\Omega} (L(-D_pH(D_xu,\mu),\mu) + F(\mu,x)) dm \\
        &= \int_{\Omega} (D_pH(D_xu,\mu) \cdot D_xu - H(D_xu,\mu) + F(\mu,x)) dm \\
        &= \rho \\
        &= \lim_{T \to \infty} \frac{1}{T} E \int_0^T (L(-D_pH(D_xu(X_t),\mu),\mu) + F(\mu,X_t)) dt
    \end{aligned}
    $$
    where $dX_t = -D_pH(D_xu(X_t),\mu)dt + \sqrt{2\sigma}dB_t$.
\end{remark}

\begin{theorem} \label{Thm: A Priori Estimate 1}
    Assume A\ref{A: Hamiltonian}-\ref{A: Regularity of G} and either A\ref{A: Asymptotics of DpH} or A\ref{A: q=2} hold. Then there exists some $C > 0$ so that
    $$\Lambda_{q'}(\mu) \leq C$$
    for every solution $(u,\rho,m,\mu)$ of \eqref{Eq: Ergodic MFG}.
\end{theorem}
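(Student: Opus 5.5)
The plan is to follow the comparison-with-a-fixed-control strategy of \cite{kobeissi2022mean}, with the inadmissible control $a_0=0$ replaced by the control $b_0$ of \eqref{Eq: b0}. The idea is to compute $\rho$ in two ways: once along the optimal feedback, which gives a lower bound involving $\Lambda_{q'}(\mu)^{q'}$, and once along $b_0$, which gives an upper bound of lower order in $\Lambda_{q'}(\mu)$. Comparing the two will close the estimate.

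\textbf{Lower bound along the optimal feedback.} Let $(u,\rho,m,\mu)$ be a solution and set $\alpha^*=-D_pH(D_xu,\mu)$. By the remark following Lemma \ref{Lem: u as a Test Function}, the fixed-point relation gives
$$\rho=\int_{\Omega\times\R^n}\big(L(\alpha,\mu)+F(\mu,x)\big)\,d\mu(x,\alpha).$$
Applying A\ref{A: Lower bound for L} and A\ref{A: F}, this yields
$$\rho\ \ge\ \frac{1}{C_0}\Lambda_{q'}(\mu)^{q'}-C_0\big(1+\Lambda_{q'}(\mu)^{q'}\big)-C_0.$$
This is only useful if the coefficient $\tfrac1{C_0}-C_0$ is positive, and in general it is not. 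I would therefore not rely on this crude bound. Instead I would use the monotonicity A\ref{A: LL monotonicity} together with the fixed point, in the manner of \cite{kobeissi2022mean}, by comparing $\mu$ with a reference measure.

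\textbf{Reference measure and admissibility.} Let $m_0$ be the invariant density of $b_0$, which exists by Theorem \ref{Thm: Well-Posedness of FPK} since $b_0$ satisfies the asymptotics of A\ref{A: Asymptotics of DpH} via Lemma \ref{Lem: Asymptotic Behavior of Derivatives} applied to $v$. Set $\mu_0=(I,b_0)\#m_0$. By Theorem \ref{Thm: Regularity of m}, $m_0\le Cd^{q'}$, and since $|b_0|\sim q'/d$, the quantity $\Lambda_{q'}(\mu_0)$ is finite. It depends only on $\Omega,\sigma,q$, so it is a fixed constant. By Lemma \ref{Lem: Stochastic Interpretation}, $b_0$ is admissible and $\alpha^*$ is optimal. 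Using the ergodic representation \eqref{Eq: Stochastic Interpretation of rho} with the stationary law $m_0$ of $b_0$, optimality gives
$$\int (L(\alpha,\mu)+F(\mu,x))\,d\mu\ \le\ \int (L(\alpha,\mu)+F(\mu,x))\,d\mu_0 .$$
Justifying this replacement of the time average by $\int\cdot\,dm_0$ requires an ergodic theorem for $b_0$, or more simply testing the HJ equation for $u$ against $m_0$ via Lemma \ref{Lem: u as a Test Function}. I would use the latter, together with $-\alpha\cdot p-H(p,\mu)\le L(\alpha,\mu)$.

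\textbf{Closing the estimate.} Next I apply A\ref{A: LL monotonicity} and A\ref{A: F} to the pair $\mu,\mu_0$:
$$\int L(\alpha,\mu)\,d(\mu-\mu_0)\ \ge\ \int L(\alpha,\mu_0)\,d(\mu-\mu_0),$$
and similarly for $F$. Combining this with the previous inequality gives
$$\int (L(\alpha,\mu_0)+F(\mu_0,x))\,d\mu\ \le\ \int (L(\alpha,\mu_0)+F(\mu_0,x))\,d\mu_0 .$$
The right-hand side is bounded by a constant, by A\ref{A: Bound for L}, because $\Lambda_{q'}(\mu_0)<\infty$. The left-hand side is bounded below, by A\ref{A: Lower bound for L} applied with the fixed measure $\mu_0$, by $\frac1{C_0}\Lambda_{q'}(\mu)^{q'}-C$. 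Together these give $\Lambda_{q'}(\mu)\le C$.

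\textbf{Main obstacle.} The hard step will be rigorously justifying $\int(-\sigma\Delta u+b_0\cdot D_xu)\,dm_0=0$. Here $u$ blows up at the boundary and $b_0$ is not the drift associated with $u$. Lemma \ref{Lem: u as a Test Function} covers exactly this situation: it requires only the asymptotics of $b_0$ and that $u$ solve the HJ equation. It remains to check that $b_0=\frac{\sigma q'}{d}[\n+O(d^\theta)]$. That follows from Lemma \ref{Lem: Asymptotic Behavior of Derivatives} applied to the pure power Hamiltonian $|p|^q$, after rescaling by $\sigma$.
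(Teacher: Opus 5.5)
Your argument is correct. It has the same skeleton as the paper's proof: compare with the fixed admissible control $b_0$, use A\ref{A: LL monotonicity} to move the measure argument of $L$ onto the reference measure, then apply A\ref{A: Lower bound for L} there. Where you differ is in how the optimality comparison is carried out.

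\textbf{How the paper does it.} The paper works through the stochastic representation of Lemma \ref{Lem: Stochastic Interpretation}. It compares the ergodic cost of the optimal feedback with that of the diffusion $\widetilde X_t$ driven by $b_0$. It then applies monotonicity at each time $t$ to the non-stationary laws $\widetilde\mu_t=\mathcal{L}(\widetilde X_t,b_0(\widetilde X_t))$ and averages in time. It closes with an ergodic limit $\lim_{T}\frac1T E\int_0^T|b_0|^{q'}dt=C_q\widetilde\rho$ borrowed from Lasry--Lions.

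\textbf{How you do it.} You use the stationary measure $\mu_0=(I,b_0)\#m_0$ and obtain the optimality inequality analytically. Integrating the HJ equation for $u$ against $m_0$ and using the Fenchel inequality $-b_0\cdot D_xu-H(D_xu,\mu)\le L(b_0,\mu)$ gives $\rho\le\int(L+F)\,d\mu_0$. The Remark following Lemma \ref{Lem: u as a Test Function} gives $\rho=\int(L+F)\,d\mu$.

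\textbf{Trade-offs.} Your route needs no time-dependent measures. It also avoids justifying the time averages and the ergodic theorem for $b_0$. Moreover, $\Lambda_{q'}(\mu_0)^{q'}$ can be computed exactly by testing the equation for $v$ against $m_0$: it equals $\frac{q^{q'}}{q-1}\widetilde\rho$, which matches the paper's $C_q\widetilde\rho$. The price is two extra citations:
\begin{itemize}
\item Theorems \ref{Thm: Well-Posedness of FPK}--\ref{Thm: Regularity of m} for the drift of the pure-power Hamiltonian. This case is covered by the examples and the Remark in Section \ref{Sec: FP}.
\item Lemma \ref{Lem: u as a Test Function} for a drift not generated by $u$ itself. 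Its statement allows this, since only the rate $\sigma q'/d$ matters.
\end{itemize}
Using the monotonicity in A\ref{A: F} for the $F$-terms is fine. The bound $\|F\|_\infty\le C_0$ would suffice, as in the paper.

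\textbf{A sign correction.} The $b_0$ in \eqref{Eq: b0} is the SDE drift and points inward: $b_0=-\frac{\sigma q'}{d}[\n+O(d^{\theta})]$, not $+\frac{\sigma q'}{d}[\n+O(d^{\theta})]$. So $m_0$ solves $\sigma\Delta m_0-\nabla\cdot(m_0 b_0)=0$, and Lemma \ref{Lem: u as a Test Function} must be applied with $b=-b_0$. This gives $\int(-\sigma\Delta u-b_0\cdot D_xu)\,dm_0=0$. With the sign you wrote, the Fenchel step would produce $L(-b_0,\mu)$, which does not match $\mu_0=(I,b_0)\#m_0$.
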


\begin{proof}
    By Lemma \ref{Lem: Stochastic Interpretation}, choosing $a(\cdot) \coloneqq -D_pH(D_xu(\cdot),\mu)$ gives
    $$
    \begin{aligned}
        \int_{\Omega \times \R^n} L(\alpha,\mu) d\mu(x,\alpha) &= \lim_{T \to \infty} \frac{1}{T} E \int_0^T (L(a(X_t),\mu) + F(\mu,X_t)) dt - \int_\Omega F(\mu,x) dm \\
        &\leq 2C_0 + \lim_{T \to \infty} \frac{1}{T} E \int_0^T L(b_0(\widetilde{X}_t),\mu) dt \\
        &= 2C_0 +  \lim_{T \to \infty} \frac{1}{T} \int_0^T \int_{\Omega \times \R^n} L(\alpha,\mu) d\widetilde{\mu}dt
    \end{aligned}
    $$
    where $b_0(\cdot)$ is given by \eqref{Eq: b0}, $d\widetilde{X}_t = b_0(\widetilde{X}_t)dt + \sqrt{2\sigma}dB_t$, and $\widetilde{\mu} = \mathcal{L}(\widetilde{X}_t,b_0(\widetilde{X}_t))$. By A\ref{A: LL monotonicity}, we get
    $$
    \begin{aligned}
        &\int_{\Omega \times\R^n} L(\alpha,\widetilde{\mu}) d\mu(x,\alpha) + \int_{\Omega \times\R^n} L(\alpha,\mu) d\widetilde{\mu}(x,\alpha) \\
        &\qquad \leq \int_{\Omega \times\R^n} L(\alpha,\mu) d\mu(x,\alpha) + \int_{\Omega \times\R^n} L(\alpha,\widetilde{\mu}) d\widetilde{\mu}(x,\alpha)
    \end{aligned}
    $$
    for all $t$. By A\ref{A: Bound for L},
    $$\int_{\Omega \times\R^n} L(\alpha,\widetilde{\mu}) d\widetilde{\mu}(x,\alpha) = \int_\Omega L(b_0,\widetilde{\mu}) d\widetilde{m} \leq C_0\left(1 + \Lambda_{q'}(\widetilde{\mu})^{q'} + \int_\Omega |b_0|^{q'} d\widetilde{m}\right)$$
    where $\widetilde{m} = \mathcal{L}(\widetilde{X}_t)$. Furthermore, by A\ref{A: Lower bound for L}, we have
    $$|\alpha|^{q'} \leq C_0L(\alpha,\widetilde{\mu}) + C_0^2(1 + \Lambda_{q'}(\widetilde{\mu})^{q'}).$$
    Hence,
    $$
    \begin{aligned}
        \Lambda_{q'}(\mu)^{q'} &= \int_{\Omega \times \R^n} |\alpha|^{q'} d\mu(x,\alpha) \\
        &= \lim_{T \to \infty} \frac{1}{T} \int_0^T \int_{\Omega \times \R^n} |\alpha|^{q'} d\mu(x,\alpha)dt \\
        &\leq C_0^2\left(1 + \lim_{T \to \infty} \frac{1}{T} \int_0^T \Lambda_{q'}(\widetilde{\mu})^{q'}dt\right) + C_0\lim_{T \to \infty} \frac{1}{T} \int_0^T \int_{\Omega \times \R^n} L(\alpha,\widetilde{\mu}) d\mu(x,\alpha) dt \\
        &\leq C_0^2\left(1 + \lim_{T \to \infty} \frac{1}{T} \int_0^T \Lambda_{q'}(\widetilde{\mu})^{q'}dt\right) \\
        &\qquad+ C_0\lim_{T \to \infty} \frac{1}{T} \left[\int_0^T \int_{\Omega \times \R^n} L(\alpha,\mu) d(\mu - \widetilde{\mu})(x,\alpha)dt + \int_0^T\int_{\Omega \times \R^n} L(\alpha,\widetilde{\mu}) d\widetilde{\mu}(x,\alpha)dt\right] \\
        &\leq C_0^2\left(4 + 2\lim_{T \to \infty} \frac{1}{T} \int_0^T \Lambda_{q'}(\widetilde{\mu})^{q'}dt + \lim_{T \to \infty} \frac{1}{T} \int_0^T \int_\Omega |b_0|^{q'} d\widetilde{m}dt\right) \\
        &= 4C_0^2 + 3C_0^2\lim_{T \to \infty} \frac{1}{T} E\int_0^T |b_0|^{q'} dt.
    \end{aligned}
    $$
    By a slight modification to \cite[Theorem VII.3]{lasry1989nonlinear}, we have
    $$\lim_{T \to \infty} \frac{1}{T} E\int_0^T |b_0|^{q'} dt = C_q\widetilde{\rho},$$
    thus completing the proof.
\end{proof}

\section{Existence and Uniqueness Results}
\label{Sec: MFG}

\subsection{Existence of Solutions}
\label{Sec: Existence}

\begin{theorem} \label{Thm: Existence}
    Assume A\ref{A: F}-\ref{A: Regularity of G} and either A\ref{A: Asymptotics of DpH} or A\ref{A: q=2} hold. Then there exists a solution to \eqref{Eq: Ergodic MFG}.
\end{theorem}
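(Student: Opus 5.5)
We will apply Schauder's fixed-point theorem on a convex subset of measures, as announced in the introduction. Let $C_*$ be the constant from Theorem \ref{Thm: A Priori Estimate 1}, or rather a slightly enlarged constant obtained from the same comparison argument with the control $b_0$. Define
$$\mathcal{K} \coloneqq \{\nu \in \mathcal{P}_{q'}(\overline{\Omega} \times \R^n) : \Lambda_{q'}(\nu) \leq C_*\}.$$
This set is convex, and it is tight since $\overline{\Omega}$ is compact and the $q'$-moment in $\alpha$ is uniformly bounded. It is also closed under weak$^*$ convergence, by lower semicontinuity of $\Lambda_{q'}$. Hence it is a compact convex subset of the dual of $C_0(\overline{\Omega}\times\R^n)$ with the weak$^*$ topology, which is a locally convex space. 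Given $\nu \in \mathcal{K}$, we proceed in three steps. First, Theorem \ref{Thm: HJB has a solution} provides the unique $(u_\nu,\rho_\nu)$, normalized by $u_\nu(x_0)=0$, solving \eqref{Eq: Ergodic HJB} with $\mu=\nu$. Next, by Lemma \ref{Lem: Asymptotic Behavior of Derivatives} together with A\ref{A: Asymptotics of DpH} or A\ref{A: q=2} and the subsequent Remark, Theorems \ref{Thm: Well-Posedness of FPK} and \ref{Thm: Regularity of m} yield a unique $m_\nu$ with $A_1 d^{q'} \le m_\nu \le A_2 d^{q'}$ for the drift $b = D_pH(D_xu_\nu,\nu)$. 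Finally, Lemma \ref{Lem: Existence & Bound for mu}(3) provides the unique $\Phi(\nu)=\mu$ with $\mu = (I,-D_pH(D_xu_\nu,\mu))\#m_\nu$. This last step applies because $|D_xu_\nu|^{q-1}d \to \gamma>0$ by Lemma \ref{Lem: Asymptotic Behavior of Derivatives}, and because $\int d^{-q'}dm_\nu \le A_2|\Omega|$. A fixed point $\Phi(\nu)=\nu$ then gives $\mu=\nu$, and hence a solution of \eqref{Eq: Ergodic MFG}.

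\emph{Self-map.} We must show that $\Lambda_{q'}(\Phi(\nu)) \le C_*$. We will use the bound \eqref{Eq: Bound for Lambda-q'}, which requires controlling $\|D_xu_\nu\|_{L^q(m_\nu)}$ uniformly over $\nu\in\mathcal{K}$. The natural route is to redo the proof of Theorem \ref{Thm: A Priori Estimate 1} for the decoupled problem, using the optimality of $-D_pH(D_xu_\nu,\nu)$ for the cost $L(\cdot,\nu)$ given by Lemma \ref{Lem: Stochastic Interpretation}. Comparing with $b_0$ and using A\ref{A: Lower bound for L}--A\ref{A: Bound for L}, together with the identity $\int L(\alpha,\nu)\,d\Phi(\nu) = \rho_\nu - \int F dm_\nu$, we would get
$$\int |D_pH(D_xu_\nu,\nu)|^{q'}dm_\nu \le C\bigl(1+\Lambda_{q'}(\nu)^{q'}\bigr).$$
Combined with \eqref{Eq: Convexity for H-theta}, this controls $\|D_xu_\nu\|^q_{L^q(m_\nu)}$. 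This linear-in-$\Lambda_{q'}(\nu)^{q'}$ growth is not closing by itself. To obtain invariance, I would use the monotonicity A\ref{A: LL monotonicity} between $\Phi(\nu)$ and the law $\widetilde\mu$ generated by $b_0$, as in Theorem \ref{Thm: A Priori Estimate 1}, so that the bound on $\Lambda_{q'}(\Phi(\nu))$ depends only on $\widetilde\rho$ and $C_0$. If that fails, the fallback is a Leray--Schauder variant: apply the fixed-point theorem to $s\Phi$-type homotopies (e.g.\ scaling $F$ and the $\mu$-dependence), where the a priori estimate of Theorem \ref{Thm: A Priori Estimate 1} applies uniformly in $s$.

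\emph{Continuity.} Let $\nu_k\to\nu$ in $\mathcal{K}$. Since $f_1,f_3$ map $\mathcal{K}$ into compact sets, Theorem \ref{Thm: Gradient Estimate (Given mu)} gives $|D_xu_{\nu_k}|\le Cd^{-1/(q-1)}$ uniformly, and $\rho_{\nu_k}$ is bounded. Local $W^{2,r}$ estimates give convergence along subsequences to a solution of the limit problem, and the uniqueness in Theorem \ref{Thm: HJB has a solution} identifies the limit as $u_\nu$. Thus $D_xu_{\nu_k}\to D_xu_\nu$ locally uniformly, and in particular a.e. For the densities, the bounds $A_1d^{q'}\le m_{\nu_k}\le A_2 d^{q'}$ must hold with constants uniform in $k$. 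I expect this to be the main obstacle, because it requires revisiting the proof of Theorem \ref{Thm: Regularity of m} to check that the constants depend only on the $O(d^{\theta})$ constants in the drift expansions. In turn, those constants are uniform because the proofs of Lemma \ref{Lem: Asymptotic Behavior of Derivatives} and the expansion of $u$ depend on $\nu$ only through $f_1(\nu)$, $f_3(\nu)$ and $\Lambda_{q'}(\nu)$. Given the uniform $C^{1+\beta}$ bound, $m_{\nu_k}\to \bar m$ uniformly along subsequences. Passing to the limit in Definition \ref{Def: Solution to FPK} with test data $\eta\in W^{1,\infty}$, using the uniform barrier near $\B$, shows that $\bar m$ is a weak solution; the uniqueness in Theorem \ref{Thm: Well-Posedness of FPK} then gives $\bar m = m_\nu$. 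Lemma \ref{Lem: Continuity of mu} then yields $\Phi(\nu_k)\to\Phi(\nu)$, since the fixed-point relation there uses $D_pH(\cdot,\mu)$ with the continuity A\ref{A: Hamiltonian}. Schauder's theorem concludes the proof.
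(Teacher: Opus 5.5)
There is a genuine gap at the self-map step, and the monotonicity fix you propose cannot close it. In the proof of Theorem~\ref{Thm: A Priori Estimate 1}, optimality of $-D_pH(D_xu,\mu)$ for the running cost $L(\cdot,\mu)+F(\mu,\cdot)$ gives $\int L(\alpha,\mu)\,d\mu\le 2C_0+\int L(\alpha,\mu)\,d\widetilde\mu$ (time-averaged in $\widetilde\mu$). Applying A\ref{A: LL monotonicity} to the pair $(\mu,\widetilde\mu)$ cancels exactly these two terms. What is left is $\int L(\alpha,\widetilde\mu)\,d\mu\le 2C_0+\int L(\alpha,\widetilde\mu)\,d\widetilde\mu$, which A\ref{A: Lower bound for L}--A\ref{A: Bound for L} turn into a bound depending only on $b_0$ and $C_0$. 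That cancellation requires the measure in the Hamilton--Jacobi equation to be the measure whose moment is being estimated, so it works only at a fixed point.

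For $\nu\ne\Phi(\nu)$ this breaks down. Optimality of $u_\nu$ controls $\int L(\alpha,\nu)\,d\hat\mu_\nu$, where $\hat\mu_\nu\coloneqq(I,-D_pH(D_xu_\nu,\nu))\#m_\nu$ is neither $\nu$ nor $\Phi(\nu)$. A\ref{A: LL monotonicity} for $(\nu,\widetilde\mu)$ brings in $\int L(\alpha,\nu)\,d\nu$, which nothing controls. For $(\Phi(\nu),\widetilde\mu)$ it brings in $L(\cdot,\Phi(\nu))$, which is unrelated to the control problem solved by $u_\nu$. You are therefore left with $\Lambda_{q'}(\Phi(\nu))^{q'}\le A+B\,\Lambda_{q'}(\nu)^{q'}$, where $B$ depends on $C_0$ and there is no reason for $B<1$. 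The same conflation underlies your identity $\int L(\alpha,\nu)\,d\Phi(\nu)=\rho_\nu-\int F\,dm_\nu$, which holds with $\hat\mu_\nu$ in place of $\Phi(\nu)$ but not in general.

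The Leray--Schauder fallback is not yet an argument. First, $s\Phi(\nu)$ is not a probability measure. Second, you would need an explicit homotopy of data along which A\ref{A: LL monotonicity}, A\ref{A: Regularity of G} and A\ref{A: Asymptotics of DpH}/A\ref{A: q=2} hold uniformly, so that Theorem~\ref{Thm: A Priori Estimate 1} bounds the fixed points of every intermediate map. That homotopy must end at a problem with known index, and you would also need a continuation principle valid on this space of measures.

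The paper makes invariance automatic by a retraction: $T(\widetilde\mu)=\mu$ if $\Lambda_{q'}(\mu)\le C$, and $T(\widetilde\mu)=(I,\tfrac{C}{\Lambda_{q'}(\mu)}I)\#\mu$ otherwise, so $T(X)\subset X$ by construction. This relocates the difficulty rather than removing it. One still has to show that the Schauder fixed point does not lie in the rescaled branch. A configuration in that branch is not a solution of \eqref{Eq: Ergodic MFG}, so Theorem~\ref{Thm: A Priori Estimate 1} does not exclude it, and the paper's proof passes over this point. Either way, what is missing is an a priori bound for the auxiliary (non-equilibrium) problem, not just for genuine solutions.

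Your continuity step otherwise follows the paper's: uniqueness for \eqref{Eq: Ergodic HJB}, then uniform bounds from Theorem~\ref{Thm: Regularity of m}, then Lemma~\ref{Lem: Continuity of mu}. You rightly flag that the constants in Theorem~\ref{Thm: Regularity of m} must be uniform in $k$; the paper uses this tacitly. One gap remains there: test functions in Definition~\ref{Def: Solution to FPK} depend on the drift. So identifying the limit density requires building, for each drift $b_k$, solutions $\phi_k$ of $-\sigma\Delta\phi_k+b_k\cdot D_x\phi_k=\eta+\lambda_k$ with uniform $W^{1,r}$ bounds, and proving $\lambda_k\to0$. The paper does this via \cite[Proposition 3.9]{porretta2023ergodic}; a barrier near $\B$ alone does not supply it.
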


\begin{proof}
    Fix $x_0 \in \Omega$. Now let $X$ denote the set of $\nu \in \mathcal{P}_{q'}(\overline{\Omega} \times \R^n)$ such that $\Lambda_{q'}(\nu) \leq C$, where $C \geq 1$ is a constant such that $\Lambda_{q'}(\mu) \leq C$ for every solution of \eqref{Eq: Ergodic MFG} (see Section \ref{Sec: A Priori Estimates}). Given $\widetilde{\mu} \in X$, define $(u,\rho,m)$ to be the unique solution to
    \begin{equation}
        \begin{cases} \label{Eq: MFG for Fixed-Point Map}
            -\sigma\Delta u + H(D_xu,\widetilde{\mu}) + \rho = F(\widetilde{\mu},x) \\
            \sigma\Delta m + \nabla \cdot (mD_pH(D_xu,\widetilde{\mu})) = 0 \\
            m \geq 0, \qquad \int_\Omega m dx = 1, \qquad \underset{d(x) \to 0}{\lim} u(x) = \infty
        \end{cases}
    \end{equation}
    with $u(x_0) = 0$. By Sobolev embedding, we get $D_xu \in C^0(\Omega;\R^n)$. Thus, using Lemma \ref{Lem: Existence & Bound for mu}, we can define $T: X \to X$ as follows. If $\mu = (I,-D_pH(D_xu,\mu)) \# m \in X$, define $T(\widetilde{\mu}) = \mu$; otherwise, define $T(\widetilde{\mu}) = (I,\frac{C}{\Lambda_{q'}(\mu)}I)\sharp \mu$. 

    We observe that $X$ is a compact subset of $\mathcal{P}(\overline{\Omega} \times \R^n)$ by tightness. To prove that $T$ is continuous, take a sequence $\widetilde{\mu}_k \to \widetilde{\mu}$ with $\widetilde{\mu}_k \in X$, and let $(u_k,\rho_k,m_k)$ be the corresponding solutions of \eqref{Eq: MFG for Fixed-Point Map}. Note that $\rho_k$ is bounded and that $u_k$ is bounded in $W^{2,r}(K)$ for all $r > 0$ and $K \subset\subset \Omega$. Hence, we can choose some $\rho \in \R$ and $u \in W^{1,r}_{loc}(\Omega)$ so that, up to a subsequence, $\rho_k \to \rho$ and $u_k \to u$ in $W^{2,r}(K)$ for all $r > 0$ and $K \subset\subset \Omega$. Hence,
    $$-\sigma\Delta u + H(D_xu,\widetilde{\mu}) + \rho = F(\widetilde{\mu},x).$$
    By uniqueness, we get the convergence of the entire sequence.

    Now note that by Theorem \ref{Thm: Regularity of m}, $m_k$ is a bounded sequence in $H^1(\Omega) \cap L^\infty(\Omega)$. Hence, there is some $m \in L^\infty(\Omega) \cap H^1(\Omega)$ such that, passing to a subsequence if necessary, $m_k \to m$ a.e., strongly in $L^r(\Omega)$ for $r \geq 1$, and weakly in $H^1(\Omega)$. To show that the entire sequence converges, we will show that $m$ satisfies \eqref{Eq: FPK} for $b = -D_pH(D_xu,\widetilde{\mu})$, and the result will follow by uniqueness. To this end, let $\phi \in L^\infty(\Omega)$ be such that $-\sigma\Delta\phi + D_pH(D_xu,\widetilde{\mu}) \cdot D_x\phi = \eta$ in the sense of distributions for some $\eta \in W^{1,\infty}(\Omega)$. By \cite[Proposition 3.9]{porretta2023ergodic}, we can let $\phi_k$ be the solution of
    $$-\sigma\Delta\phi_k + D_pH(D_xu_k,\widetilde{\mu}_k) \cdot D_x\phi_k = \eta + \lambda_k.$$
    Then we have
    $$\int_\Omega(-\sigma\Delta\phi + D_pH(D_xu,\widetilde{\mu}) \cdot \phi + \lambda_k)dm_k = \int_\Omega(-\sigma\Delta\phi_k + D_pH(D_xu_k,\widetilde{\mu}_k) \cdot \phi_k)dm_k = 0.$$
    Since $|\lambda_k| \leq \|\eta\|_\infty \leq C$ by \cite[Equation (3.33)]{porretta2023ergodic}, there is some $\lambda \in \R$ such that, taking a subsequence if necessary, $\lambda_k \to \lambda$ and hence
    $$\int_\Omega (-\sigma\Delta\phi + D_pH(D_xu,\widetilde{\mu}) \cdot D_x\phi + \lambda)dm = 0.$$
    On the other hand, note that for all $\xi \in C^\infty_c(\Omega)$, we have
    $$\int_\Omega D_x\phi_k \cdot (\sigma D_x\xi + D_pH(D_xu_k,\widetilde{\mu}_k)\xi) dx =  \int_\Omega (\eta + \lambda_k)\xi dx.$$
    By \cite[Proposition 3.5]{porretta2023ergodic}, we get bounds for $\phi_k$ in $W^{1,r}(\Omega)$ for all $r \geq 1$, uniformly in $k$. Thus, passing to a subsequence if necessary, we can find some $\widetilde{\phi} \in H^1(\Omega)$ with $D_x\phi_k \to D_x\widetilde{\phi}$ weakly in $L^2$. Since $D_pH(D_xu_k,\widetilde{\mu}_k)$ is locally uniformly bounded and $D_pH(D_xu_k,\widetilde{\mu}_k) \to D_pH(D_xu,\widetilde{\mu})$ a.e., passing to the limit gives
    $$\int_\Omega D_x\widetilde{\phi} \cdot (\sigma D_x\xi + D_pH(D_xu,\widetilde{\mu})) dx = \int_\Omega(\eta + \lambda)\xi dx.$$
    In particular, $\phi,\widetilde{\phi}$ are weak solutions to
    $$\sigma\Delta\phi + D_pH(D_xu,\widetilde{\mu}) \cdot D_x\phi = \eta, \qquad \sigma\Delta\widetilde{\phi} + D_pH(D_xu,\widetilde{\mu}) \cdot D_x\widetilde{\phi} = \eta + \lambda.$$
    Thus, \cite[Proposition 3.9]{porretta2023ergodic} gives us that $\lambda = 0$.

    Let $\widetilde{m}$ be the unique solution to
    $$\sigma\Delta\widetilde{m} + \nabla \cdot (\widetilde{m}D_pH(D_xu,\widetilde{\mu})) = 0$$
    and let $\xi \in W^{1,\infty}(\Omega)$. Again, by \cite[Proposition 3.9]{porretta2023ergodic}, there is some $\phi \in W^{1,r}(\Omega)$ with $-\sigma\Delta\phi + D_pH(D_xu,\mu) \cdot D_x\phi = \xi + \lambda_\xi$ for some $\lambda_\xi \in \R$. Finally, using $\phi$ as a test function gives us that
    $$\int_\Omega (\xi + \lambda_\xi)dm = \int_\Omega (\xi + \lambda_\xi)d\widetilde{m} = 0$$
    and hence
    $$\int_\Omega \xi dm = \int_\Omega \xi d\widetilde{m}.$$
    Therefore, we conclude $m = \widetilde{m}$. Finally, we apply Lemma \ref{Lem: Continuity of mu} to get that $T(\widetilde{\mu}_k) \to T(\widetilde{\mu})$. By the Schauder fixed-point theorem, it follows that $T$ has a fixed-point $\mu$ and hence \eqref{Eq: Ergodic MFG} has a solution.
\end{proof}

\subsection{Uniqueness of Solutions}
\label{Sec: Uniqueness}

We conclude by proving the uniqueness of solutions to our system. The argument is adapted from \cite{kobeissi2022mean} and uses Lasry-Lions monotonicity to obtain uniqueness.
\begin{theorem} \label{Thm: Uniqueness}
    Assume A\ref{A: F}-\ref{A: Regularity of G} and either A\ref{A: Asymptotics of DpH} or A\ref{A: q=2} hold. Then there is at most one solution $(u,\rho,m,\mu)$ to \eqref{Eq: Ergodic MFG}, up to adding a constant to $u$.
\end{theorem}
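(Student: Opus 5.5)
We follow the Lasry--Lions argument as adapted to MFGC in \cite{kobeissi2022mean}. Let $(u_1,\rho_1,m_1,\mu_1)$ and $(u_2,\rho_2,m_2,\mu_2)$ be two solutions of \eqref{Eq: Ergodic MFG}, and write $a_i = -D_pH(D_xu_i,\mu_i)$, so that $\mu_i = (I,a_i)\#m_i$. For each $i$ the remark after Lemma \ref{Lem: u as a Test Function} gives the equilibrium identity
$$\rho_i = \int_{\Omega\times\R^n}(L(\alpha,\mu_i)+F(\mu_i,x))\,d\mu_i.$$

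The comparison inequality comes from the stochastic (or integrated) optimality of $u_i$ tested against the other equilibrium control. By Lemma \ref{Lem: Stochastic Interpretation} applied to $u_1$ with the admissible control $a_2\in\mathcal{A}$, we get
$$\rho_1 \le \lim_{T\to\infty}\frac1T E\int_0^T \big(L(a_2(X^2_t),\mu_1)+F(\mu_1,X^2_t)\big)dt,$$
where $X^2$ is the diffusion driven by $a_2$. Since $m_2$ is the unique invariant measure of $X^2$ (Theorem \ref{Thm: Well-Posedness of FPK}), ergodicity identifies the right-hand side with $\int (L(\alpha,\mu_1)+F(\mu_1,x))\,d\mu_2$. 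Alternatively, one can do this analytically. Test the HJ equation for $u_1$ against $m_2$ using Lemma \ref{Lem: u as a Test Function} with $b=-a_2$, which is allowed since $u_1$ is a solution of the HJ equation for $\mu_1$ and $b$ has the required asymptotics. Then apply the Fenchel inequality $-a_2\cdot D_xu_1 - H(D_xu_1,\mu_1)\le L(a_2,\mu_1)$. Either route gives
$$\rho_1\le\int (L(\alpha,\mu_1)+F(\mu_1,x))\,d\mu_2,$$
and symmetrically for $\rho_2$. Here is where the blow-up enters: one must check that $\int |a_2|^{q'}dm_2<\infty$, which follows from $m_2\le Cd^{q'}$ (Theorem \ref{Thm: Regularity of m}) and $|a_2|\le C d^{-1}$, so that every integral is finite.

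Next we subtract the equilibrium identities from the comparison inequalities and add the two results:
$$0\le -\int\big(L(\alpha,\mu_1)-L(\alpha,\mu_2)\big)d(\mu_1-\mu_2) - \int\big(F(\mu_1,x)-F(\mu_2,x)\big)d(m_1-m_2)\le 0$$
by A\ref{A: F} and A\ref{A: LL monotonicity}. Hence all inequalities are equalities, and in particular $\rho_1 = \int(L(\alpha,\mu_1)+F(\mu_1,x))\,d\mu_2$. Equality in Fenchel's inequality $m_2$-a.e. (with $m_2>0$ in $\Omega$) together with strict convexity (A\ref{A: Hamiltonian}, A\ref{A: Lagrangian}) forces
$$a_2 = -D_pH(D_xu_1,\mu_1)=a_1 \quad\text{in } \Omega.$$
Then $m_1$ and $m_2$ solve the same Fokker--Planck equation, so $m_1=m_2$ by Theorem \ref{Thm: Well-Posedness of FPK}, and therefore $\mu_1=(I,a_1)\#m_1=\mu_2$. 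With $\mu_1=\mu_2$ the two HJ equations coincide, and Theorem \ref{Thm: HJB has a solution} gives $\rho_1=\rho_2$ and $u_1-u_2$ constant.

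The main obstacle is the equality step. Turning equality of integrals into pointwise equality of controls needs the Fenchel gap $L(a_2,\mu_1)+a_2\cdot D_xu_1+H(D_xu_1,\mu_1)\ge0$ to integrate to zero against $m_2$. This in turn requires the integration by parts of Lemma \ref{Lem: u as a Test Function} to be valid with the cross drift $b=-a_2$ and the function $u_1$. We would verify its hypotheses via Lemma \ref{Lem: Asymptotic Behavior of Derivatives} and A\ref{A: Asymptotics of DpH}: since $f_1(\mu_1)$ may differ from $f_1(\mu_2)$, the leading asymptotics of $D_xu_1$ and $a_2$ differ by constants. We should therefore check that the lemma's proof only uses $|D_xu_1|\lesssim d^{-1/(q-1)}$ and $m_2\lesssim d^{q'}$, which make $m_2 D_xu_1$ integrable with vanishing boundary flux. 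If that fails, we would fall back on the stochastic route via Lemma \ref{Lem: Stochastic Interpretation}, where admissibility of $a_2$ is already established.
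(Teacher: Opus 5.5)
Your proposal is correct and is essentially the paper's argument: you test each $u_i$ against both $m_1$ and $m_2$ via Lemma \ref{Lem: u as a Test Function}, apply the Fenchel inequality across the two measures together with A\ref{A: F} and A\ref{A: LL monotonicity}, and use the equality case to get $a_1=a_2$, then conclude by uniqueness for the Fokker--Planck and Hamilton--Jacobi equations. Writing this through the $\rho_i$ identities instead of subtracting the HJ equations and integrating against $m_1-m_2$ is only a rearrangement, and your worry about the cross drift is unnecessary: Lemma \ref{Lem: u as a Test Function} is stated for $u$ solving the HJ equation with an arbitrary $\mu$, and the leading drift asymptotics $q'/d$ do not depend on $\mu$ (A\ref{A: Asymptotics of DpH}, or the remark in Section \ref{Sec: FP} when $q=2$).
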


\begin{proof}
    Let $(u_1,\rho_1,m_1,\mu_1)$ and $(u_2,\rho_2,m_2,\mu_2)$ be solutions.
    Then by lemma \ref{Lem: u as a Test Function} and A\ref{A: F}, we get
    $$
    \begin{aligned}
        0 \leq& \int_\Omega m_1(H(D_xu_1,\mu_1) - H(D_xu_2,\mu_2) + D_x(u_2 - u_1) \cdot D_pH(D_xu_1,\mu_1)) dx \\
        &+ \int_\Omega m_2(H(D_xu_2,\mu_2) - H(D_xu_1,\mu_1) + D_x(u_1 - u_2) \cdot D_pH(D_xu_2,\mu_2)) dx.
    \end{aligned}
    $$
    Note that for $i = 1,2$, letting $\alpha^{\mu_i} \coloneqq -D_pH(D_xu_i,\mu_i)$, we get
    $$L(\alpha^{\mu_i},\mu_i) = D_xu_i \cdot D_pH(D_xu_i,\mu_i) - H(D_xu_i,\mu_i)$$
    and
    $$D_xu_i = -D_\alpha L(\alpha^{\mu_i},\mu_i).$$
    Thus,
    $$
    \begin{aligned}
        0 =& \int_\Omega m_1(L(\alpha^{\mu_2},\mu_2) - L(\alpha^{\mu_1},\mu_1) + D_\alpha L(\alpha^{\mu_2},\mu_2) \cdot (\alpha^{\mu_1}-\alpha^{\mu_2})) dx \\
        &+ \int_\Omega m_2(L(\alpha^{\mu_1},\mu_1) - L(\alpha^{\mu_2},\mu_2) + D_\alpha L(\alpha^{\mu_1},\mu_1) \cdot (\alpha^{\mu_2}-\alpha^{\mu_1})) dx.
    \end{aligned}
    $$
    Since $L$ is strictly convex,
    \begin{equation}
        L(\alpha_1,\mu) - L(\alpha_2,\mu) + D_\alpha L(\alpha_1,\mu) \cdot (\alpha_2 - \alpha_1) \leq 0
        \label{Eq: Convexity of L}
    \end{equation}
    with equality holding if and only if $\alpha_1 = \alpha_2$. Hence,
    $$
    \begin{aligned}
        0 \leq& \int_\Omega \bigg(m_1(L(\alpha^{\mu_1},\mu_2) - L(\alpha^{\mu_1},\mu_1)) + m_2(L(\alpha^{\mu_2},\mu_1) - L(\alpha^{\mu_2},\mu_2))\bigg)dx \\
        =& -\int_{\Omega \times \R^n} (L(\alpha,\mu_1) - L(\alpha,\mu_2)) d(\mu_1 - \mu_2)(x,\alpha).
    \end{aligned}
    $$
    By A\ref{A: LL monotonicity}, this gives
    $$\int_\Omega \bigg(m_1(L(\alpha^{\mu_2},\mu_1) - L(\alpha^{\mu_1},\mu_1)) + m_2(L(\alpha^{\mu_1},\mu_2) - L(\alpha^{\mu_2},\mu_2))\bigg)dxdt = 0.$$
    By the condition for equality for \eqref{Eq: Convexity of L}, we get $|\{x \in \Omega : \alpha^{\mu_1} \neq \alpha^{\mu_2}, m_i \neq 0\}| = 0$ for $i = 1,2$. Therefore, $\alpha^{\mu_1} = \alpha^{\mu_2}$. By the uniqueness of solutions to the Fokker-Planck equation, $m_1 = m_2$. Therefore, $\mu_1 = (I,\alpha^{\mu_i}) \# m_i = \mu_2$. By the uniqueness of solutions to the Hamilton-Jacobi equation, $u_1 = u_2 + C$ and $\rho_1 = \rho_2$.
\end{proof}

\section{Acknowledgments}

We would like to thank Alessio Porretta for his technical assistance regarding the Fokker-Planck equation. Additionally, we are grateful to be supported by National Science Foundation through NSF Grant DMS-2045027.

\bibliographystyle{amsplain}
\bibliography{refs}

\end{document}